\theoremstyle{definition}
 \newtheorem{dfn}{Definition}
 \newtheorem{remark}[dfn]{Remark}
\theoremstyle{plain}
 \newtheorem{thm}[dfn]{Theorem}
\newtheorem{assump}[dfn]{Assumption}
\newcommand{\bk}{{\bold k}}
\newcommand{\bu}{{\bold u}}
\newcommand{\bv}{{\bold v}}
\newcommand{\bw}{{\bold w}}
\newcommand{\bff}{{\bold f}}
\newcommand{\bD}{{\bold D}}
\newcommand{\bG}{{\bold G}}
\newcommand{\bI}{{\bold I}}
\newcommand{\bV}{{\bold V}}
\newcommand{\DV}{{\rm Div}\,}
\newcommand{\dv}{{\rm div}\,}
\newcommand{\BR}{{\Bbb R}}
\newcommand{\BC}{{\Bbb C}}
\newcommand{\BN}{{\Bbb N}}
\newcommand{\CD}{{\mathcal D}}
\newcommand{\CE}{{\mathcal E}}
\newcommand{\CF}{{\mathcal F}}
\newcommand{\CI}{{\mathcal I}}
\newcommand{\CL}{{\mathcal L}}
\newcommand{\CM}{{\mathcal M}}
\newcommand{\CR}{{\mathcal R}}
\newcommand{\CS}{{\mathcal S}}
\newcommand{\CT}{{\mathcal T}}
\newcommand{\CP}{{\mathcal P}}
\newcommand{\CU}{{\mathcal U}}
\newcommand{\CV}{{\mathcal V}}
\newcommand{\fp}{{\frak p}}
\newcommand{\bg}{{\bold g}}
\newcommand{\pd}{\partial}
\begin{document}
\title{\bf New thought on Matsumura-Nishida theory in the $L_p$-$L_q$ maximal
regularity framework}
\author{
Yoshihiro Shibata  
\thanks{Department of Mathematics, 
Waseda University, 
Ohkubo 3-4-1, Shinjuku-ku, Tokyo 169-8555, Japan. \endgraf
e-mail address: yshibata@waseda.jp \endgraf
Adjunct faculty member in the Department of Mechanical Engineering and 
Materials Scinece, University of Pittsburgh \endgraf
partially supported by Top Global University Project and JSPS Grant-in-aid
for Scientific Research (A) 17H0109.
}
}
\date{}
\maketitle
\begin{abstract}
This paper is devoted to proving the global well-posedness of  
initial-boundary value problem for Navier-Stokes
 equations describing the motion of viscous, compressible, barotropic
fluid flows in a three dimensional exterior domain with non-slip
boundary conditions. This was first proved by an excellent paper
due to Matsumura and Nishida \cite{MN2} in 1983.  
In \cite{MN2}, they used energy method and 
their requirement was that  space derivatives of the mass density up to third order
and   space derivatives of the velocity fields up to fourth order
belong to $L_2$
in space-time,   detailed statement of Matsumura and 
Nishida theorem is given  in Theorem \ref{thm:NS.1} of Sect. 1 of context. 
This requirement is essentially used  to estimate the $L_\infty$ norm
of necessary order of derivatives in order to enclose the iteration scheme
with the help of Sobolev inequalities and also to treat the material derivatives
of the mass density.

On the other hand, this paper gives the global wellposedness of the same problem
as in \cite{MN2} 
 in $L_2$ in time  and $L_2\cap L_6$ in space maximal regularity 
class, which is an improvement of the Matsumura and Nishida theory in \cite{MN2}
from the point of view of the minimal requirement of  the regularity of solutions.
In fact, after  changing
the material derivatives to time derivatives by  Lagrange 
transformation,  enough estimates obtained by  combination of 
the maximal $L_2$ in time  and $L_2\cap L_6$ in space regularity and 
$L_p$-$L_q$ decay estimate of  the Stokes equations with 
non-slip conditions in the compressible viscous fluid flow case
enable us to use the standard Banach's fixed point argument.

Moreover, one of the purposes of this paper is to present a framework to prove the 
$L_p$-$L_q$ maximal regularity for parabolic-hyperbolic type equations with 
non-homogeneous boundary conditions and how to combine 
the maximal $L_p$-$L_q$ regularity and $L_p$-$L_q$ decay estimates 
of linearized equations to prove the global well-posedness of quasilinear problems
in unbounded domains, which gives a new thought of proving the global 
well-posedness of the initial-boundary value problem for a system of 
parabolic or parabolic-hyperbolic equations with non-homogeneous 
boundary conditions. 
\end{abstract}
{\small 2020 Mathematics Subject Classification. 35Q30, 76N10}\\
{\small Key words and phrases.  Navier-Stokes equations, compressible viscous
barotropic fluid, global well-posedness, \\
 the maximal $L_p$ space}
\section{Introduction}\label{sec:1}

A.~Matsumura and T.~Nishida  \cite{MN2}  proved the existence of  unique
solutions of  equations governing the flow of viscous, compressible, and 
heat conduction fluids in an exterior domain of 3 dimensional Euclidean
space $\BR^3$ for all times, provided the initial data are sufficiently small. 
Although Matsumura and Nishida \cite{MN2} considered the 
 the viscous, barotropic, 
and heat conductive fluid, in this paper we only consider the 
viscous, compressible, barotropic fluid  for simplicity and  reprove
 the Matsumura and Nishida theory in view of the $L_2$ in time and $L_2 \cap L_6$
in space maximal regularity theorem. 

To describe in more detail, we start with 
description of equations  considered in this paper. 
Let $\Omega$ be a three dimensional exterior domain, that is 
the complement, $\Omega^c$, of $\Omega$ is a bounded domain in
the three dimensional Euclidean space $\BR^3$.  Let $\Gamma$ be the 
boundary of $\Omega$, which is a compact $C^2$ hypersurface.  
Let  $\rho=\rho(x, t)$ and  $\bv = (v_1(x, t), 
v_2(x, t), v_3(x, t))^\top$ be respective the mass density and the velocity field, 
where $M^\top$ denotes 
the transposed $M$, $t$ is a time variable and $x=(x_1, x_2, x_3) 
\in \Omega$. Let  $\fp= \fp(\rho)$ be the fluid pressure, which
is a smooth function defined on $(0, \infty)$ such that $\fp'(\rho) > 0$ for $\rho >0$.  
We consider the following equations:
\begin{equation}\label{eq:1.1}\begin{aligned}
\pd_t\rho + \dv(\rho \bv) = 0&&\quad&\text{in $\Omega\times (0, T)$}, \\
\rho(\pd_t\bv + \bv\cdot\nabla\bv) - \DV(\mu\bD(\bv) + \nu \dv\bv\bI
-\fp(\rho)\bI) = 0&&\quad&\text{in $\Omega\times (0, T)$}, \\
\bv|_{\Gamma} = 0, \quad
(\rho, \bv)|_{t=0} = (\rho_* + \theta_0, \bv_0)
&&\quad&\text{in $\Omega$}.
\end{aligned}\end{equation}
Here, $\pd_t = \pd/\pd t$, $\bD(\bv) = \nabla\bv + (\nabla\bv)^\top$ is the deformation tensor, 
$\dv \bv = \sum_{j=1}^3 \pd v_j/\pd x_j$, for a $3\times 3$ matrix
$K$ with $(i, j)$ th component $K_{ij}$, $\DV K =(\sum_{j=1}^3 \pd K_{1j}/\pd x_j,
\sum_{j=1}^3 \pd K_{2j}/\pd x_j, \sum_{j=1}^3 \pd K_{3j}/\pd x_j)^\top$, 
$\mu$ and $\nu$ are two viscous constants such that 
$\mu > 0$ and $\mu + \nu > 0$, and $\rho_*$ is a positive constant
describing the mass density of a 
reference body. 

According to Matsumura and Nishida \cite{MN2}, we have
the  global well-posedness of equations \eqref{eq:1.1} in the $L_2$ framework
stated as follows: 
\begin{thm}[\cite{MN2}]\label{thm:NS.1}
Let $\Omega$ be a three dimensional exterior domain, the boundary
of which is a smooth $2$ dimensional compact hypersurface.  Then, 
there exsits a small number $\epsilon > 0$ such that for any 
initial data $(\theta_0, \bv_0) \in H^3(\Omega)^4$ 
satisfying  smallness condition: $\|(\theta_0, \bv_0)\|_{H^3(\Omega)}
\leq \epsilon$ and compatibility conditions of order 1, that is 
$\bv_0$ and $\pd_t\bv|_{t=0}$ vanish at $\Gamma$, 
problem \eqref{eq:1.1} admits unique solutions $\rho = \rho_*+\theta$
and $\bv$ with 
\begin{align*}
\theta \in C^0((0, \infty), H^3(\Omega)) \cap C^1((0, \infty), H^2(\Omega)),
\quad \nabla \rho \in L_2((0, \infty), H^2(\Omega)^3), \\
\bv \in C^0((0, \infty), H^3(\Omega)^3) \cap C^1((0, \infty), H^1(\Omega)^3),
\quad \nabla \bv \in L_2((0, \infty),, H^3(\Omega)^9). 
\end{align*}
\end{thm}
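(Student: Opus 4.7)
The plan, following the strategy announced in the abstract, is to pass to Lagrangian coordinates, develop an $L_2$-in-time $L_2\cap L_6$-in-space maximal regularity together with $L_p$-$L_q$ decay estimates for the associated linearized compressible Stokes system with non-slip boundary condition on the exterior domain $\Omega$, and close a Banach fixed point argument in the resulting regularity class. The first step is the Lagrangian change of variables $x = y + \int_0^t \bu(y,s)\,ds$, under which $\bu(y,t)=\bv(x,t)$ and $\theta(y,t)=\rho(x,t)-\rho_*$: the moving particle domain becomes the fixed $\Omega$ and the material derivative $\pd_t+\bv\cdot\nabla$ becomes simply $\pd_t$. Linearizing about the equilibrium $(\rho_*,0)$, \eqref{eq:1.1} reduces to
\begin{equation*}
\pd_t\theta + \rho_*\dv\bu = f(\theta,\bu),\qquad
\rho_*\pd_t\bu - \DV\bigl(\mu\bD(\bu)+\nu\dv\bu\,\bI\bigr) + \fp'(\rho_*)\nabla\theta = \bg(\theta,\bu),
\end{equation*}
with $\bu|_\Gamma=0$ and $(\theta,\bu)|_{t=0}=(\theta_0,\bv_0)$, where $f,\bg$ are at least quadratic in $(\theta,\bu,\nabla\bu)$ with coefficients depending on the Jacobian $\int_0^t\nabla\bu\,ds$ of the change of variables. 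The order-one compatibility condition stated in the theorem ensures that $(\theta_0,\bv_0)$ lies in the trace space of the maximal regularity class.

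For the linearized problem I would establish a maximal regularity estimate of the schematic form
\begin{equation*}
\|\pd_t\theta\|_{L_2(H^2\cap W^{1,6})} + \|\nabla\theta\|_{L_2(H^2\cap W^{1,6})} + \|\pd_t\bu\|_{L_2(H^1\cap L_6)} + \|\bu\|_{L_2(H^3\cap W^{2,6})}\le C(\text{data}),
\end{equation*}
by (i) proving $\CR$-boundedness of the resolvent of the linearized compressible Stokes operator in $\BR^3$ and in the half-space $\BR^3_+$ via Fourier-multiplier calculus; (ii) patching to general $\Omega$ by a partition of unity and a compact-perturbation argument that absorbs curvature-induced lower-order terms; (iii) invoking Weis' operator-valued multiplier theorem to promote these resolvent bounds to $L_2$-in-time maximal regularity; and (iv) using the coupling $\rho_*\dv\bu$ on the right-hand side of the $\theta$-equation to recover $\nabla\theta\in L_2(H^2\cap W^{1,6})$ once $\bu\in L_2(H^3\cap W^{2,6})$ has been controlled. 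For the long-time behaviour I would combine this with the $L_p$-$L_q$ decay estimates of the compressible Stokes semigroup on three-dimensional exterior domains with no-slip condition, which deliver heat-kernel-type decay coming from the parabolic component at low frequencies, and distribute short-time and long-time controls through Duhamel's principle.

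With the linear theory in hand, I would define
\begin{equation*}
\CE = \bigl\{(\theta,\bu):\|(\theta,\bu)\|_X\le\delta,\ (\theta,\bu)|_{t=0}=(\theta_0,\bv_0)\bigr\},
\end{equation*}
with $X$ the norm on the left-hand side of the maximal regularity estimate augmented by the weighted $L_\infty$-in-time quantities dictated by the decay rates, and consider the map $\Phi$ that sends $(\theta,\bu)$ to the solution of the linearized problem driven by $(f(\theta,\bu),\bg(\theta,\bu))$. Since $f,\bg$ are at least quadratic and the Sobolev embeddings $H^1(\Omega)\hookrightarrow L_6(\Omega)$ and $W^{1,6}(\Omega)\hookrightarrow L_\infty(\Omega)$ in three dimensions are tailored to the $L_2\cap L_6$ framework, standard product and trace estimates give that $\Phi$ is a contraction on $\CE$ provided $\|(\theta_0,\bv_0)\|_{H^3(\Omega)}\le\epsilon$ is small enough; the fixed point is the desired global solution, and the $C^0$, $C^1$ regularities claimed in the theorem follow from the trace theorems for the maximal regularity class.

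The main obstacle is the linear theory of the previous step. The density equation is purely hyperbolic, so one cannot invert a sectorial operator and the only smoothing of $\theta$ comes through its coupling to $\bu$; extracting the full strength $\nabla\theta\in L_2(H^2\cap W^{1,6})$ requires exploiting a genuine compensation between $\pd_t\theta$ and $\rho_*\dv\bu$ while simultaneously lifting the non-homogeneous divergence and boundary data generated by the Lagrangian change of variables via a Bogovskii-type construction compatible with $L_6$ integrability. A further subtle point is to reconcile the $L_2$-in-time maximal regularity, which is natural at high frequencies and short times, with the $L_p$-$L_q$ decay estimates that govern the behaviour as $t\to\infty$, so that the quadratic nonlinearities — in particular the memory term $\int_0^t\nabla\bu\,ds$ hidden in $\bg$ — can be closed uniformly on $(0,\infty)$, thereby upgrading local existence to global existence.
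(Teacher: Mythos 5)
There is a genuine mismatch here. The statement you are asked to prove is the Matsumura--Nishida theorem itself, which the paper does not reprove: it is quoted from \cite{MN2}, where it is established by the energy method (third-order spatial energy estimates for $\rho$ and fourth-order for $\bv$, closed via Sobolev embedding as in \eqref{sob:1}). What you have sketched instead is the paper's strategy for its \emph{own} main result (Theorem \ref{thm:main0}): Lagrangian coordinates, $\CR$-bounded resolvents, maximal $L_2$-in-time regularity in $L_2\cap L_6$, and $L_p$-$L_q$ decay of the linearized semigroup. That route deliberately produces solutions in a \emph{lower} regularity class --- $\bu\in L_p((0,T),H^2_2\cap H^2_6)$, i.e.\ two spatial derivatives of the velocity --- whereas Theorem \ref{thm:NS.1} asserts $\nabla\bv\in L_2((0,\infty),H^3(\Omega)^9)$ and $\theta\in C^0((0,\infty),H^3(\Omega))$. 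Your schematic linear estimate stops at $\bu\in L_2(H^3)$ and your fixed point closes at that level; nothing in the proposal propagates the additional derivatives, and for the continuity equation (pure transport, no smoothing) together with the no-slip boundary this is exactly where the order-one compatibility condition and the higher-order energy estimates of \cite{MN2} are needed. So even if every step you describe were carried out, you would have proved a statement of the form of Theorem \ref{thm:main0}, not the persistence of $H^3$ regularity claimed in Theorem \ref{thm:NS.1}.

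A second concrete obstruction: the global-in-time closure via $L_p$-$L_q$ decay in an exterior domain requires the quantity $[(f,\bg)]_{p,q}=\|(f,\bg)\|_{H^{1,0}_p}+\|(f,\bg)\|_{L_q}$ with $q$ close to $1$ (the paper takes $r=2(2+\sigma)/(4+\sigma)$, just above $1$), i.e.\ an integrability/decay assumption on the data and the nonlinear forcing at spatial infinity. This is \emph{not} implied by $\|(\theta_0,\bv_0)\|_{H^3(\Omega)}\le\epsilon$ alone; the paper's Remark after Theorem \ref{thm:main0} explicitly flags the extra hypothesis $(\theta_0,\bv_0)\in H^{1,0}_r(\Omega)$. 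Without it, the decay rate $t^{-\frac32(\frac1q-\frac1p)}$ available from $q=2$ is too slow to make the Duhamel integrals converge uniformly on $(0,\infty)$ in three dimensions, and the contraction cannot be closed globally. The energy method of \cite{MN2} avoids this because it never leaves the $L_2$-based framework. If you want to prove Theorem \ref{thm:NS.1} as stated, you must either follow the energy method, or supplement your maximal-regularity construction with (i) a higher-order regularity bootstrap compatible with the hyperbolic density equation and the compatibility conditions of order $1$, and (ii) an argument removing the low-integrability requirement at infinity; neither is addressed in the proposal.
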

Matsumura and 
Nishida \cite{MN2} proved Theorem \ref{thm:NS.1} essentially 
 by energy method.   One of key issues in \cite{MN2} is to estimate 
$\sup_{t \in (0, \infty)} \|\bv(\cdot, t)\|_{H^1_\infty(\Omega)}$
by Sobolev's inequality, namely  
\begin{equation}\label{sob:1}
\sup_{t \in ((0, \infty)} \|\bv(\cdot, t)\|_{H^1_\infty(\Omega)}
\leq C\sup_{t \in (0, \infty)} \|\bv(\cdot, t))\|_{H^3(\Omega)}.
\end{equation}

Recently, Enomoto and Shibata \cite{ES2} proved 
the global wellposedness of equations \eqref{eq:1.1} for 
$(\theta_0, \bv_0) \in H^2(\Omega)^4$ with small norms. Namely, 
they proved the following theorem.
\begin{thm}[\cite{ES2}]\label{thm:ES.1} Let $\Omega$ 
be a three dimensional exterior domain, the boundary
of which is a smooth $2$ dimensional compact hypersurface.   Then, 
there exsits a small number $\epsilon > 0$ such that for any 
initial data $(\theta_0, \bv_0) \in H^2(\Omega)^4$ 
satisfying   $\|(\theta_0, \bv_0)\|_{H^2(\Omega)}
\leq \epsilon$ and compatibility condition: $\bv_0|_{\Gamma}=0$, 
problem \eqref{eq:1.1} admits unique solutions $\rho = \rho_*+\theta$
and $\bv$ with 
\begin{align*}
\theta \in C^0((0, \infty), H^2(\Omega)) \cap C^1((0, \infty), H^1(\Omega)),
\quad \nabla \rho \in L_2((0, \infty), H^1(\Omega)^3), \\
\bv \in C^0((0, \infty), H^2(\Omega)^3) \cap C^1((0, \infty), L_2(\Omega)^3),
\quad \nabla \bv \in L_2((0, \infty), H^2(\Omega)^9). 
\end{align*}
\end{thm}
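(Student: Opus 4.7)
The strategy is to reduce \eqref{eq:1.1} to a quasilinear perturbation of a linear compressible Stokes system on the fixed domain $\Omega$, and then to run a Banach fixed-point argument in a function class controlled simultaneously by $L_2$-in-time, $L_2\cap L_6$-in-space maximal regularity and by $L_p$-$L_q$ decay of the linearized semigroup. First I would introduce Lagrangian coordinates $x=y+\int_0^t\bu(y,s)\,ds$, under which the material derivative $\pd_t+\bv\cdot\nabla$ becomes the plain $\pd_t$ and the non-slip boundary stays on the fixed $\Gamma$. Writing $\rho=\rho_*+\theta$ and denoting the Lagrangian velocity by $\bu$, the system takes the schematic form
\[
\pd_t\theta+\rho_*\dv\bu=F(\theta,\bu,\nabla\bu),\qquad \rho_*\pd_t\bu-\DV(\mu\bD(\bu)+\nu\dv\bu\,\bI)+\fp'(\rho_*)\nabla\theta=\bG(\theta,\bu,\nabla\bu),
\]
with $\bu|_\Gamma=0$ and the given small initial data, where $F$ and $\bG$ collect the quadratic and higher-order contributions coming from the Jacobian of the flow and the original nonlinearities.

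Next I would set up the linear theory for the compressible Stokes operator on the left-hand side. Two complementary estimates are needed. One is a maximal $L_2$-in-time, $L_2\cap L_6$-in-space regularity theorem for this mixed parabolic--hyperbolic system with non-homogeneous right-hand side, giving
\[
\pd_t\theta,\ \nabla\theta,\ \pd_t\bu,\ \nabla^2\bu\in L_2((0,\infty);(L_2\cap L_6)(\Omega)),
\]
together with the continuous embedding into $C^0([0,\infty);H^2)$; this is precisely the framework the paper is built around. The other is a family of $L_p$-$L_q$ decay estimates for the linear semigroup, giving polynomial-in-$t$ decay of $(\theta,\bu)$ in $L_q$ for $2\le q\le \infty$, obtained by coupling the low-frequency Matsumura--Nishida-type expansion of the whole-space resolvent with the compactly supported perturbation arising from $\Omega^c$.

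Given these two linear ingredients, I would define the solution class $\CX_T$ of pairs $(\theta,\bu)$ for which a composite norm $N(\theta,\bu;T)$, merging the maximal-regularity norm over $(0,T)$ with time-weighted $L_p$-$L_q$ norms, is bounded by a small $\delta$. The nonlinearities $F$ and $\bG$ are schematically of the form $\theta\,\nabla\bu$, $\bu\cdot\nabla\bu$, $\nabla\theta\cdot\int_0^t\nabla\bu\,ds$ and similar terms, multiplied by smooth bounded functions of $\theta$ and $\int_0^t\nabla\bu\,ds$. I would estimate each product via H\"older in time together with the embedding $H^1(\Omega)\hookrightarrow L_6(\Omega)$ and Gagliardo--Nirenberg interpolations such as $\|\nabla\bu\|_{L_\infty}\lesssim \|\nabla\bu\|_{L_6}^{1/2}\|\nabla^2\bu\|_{L_6}^{1/2}$, extracting a factor $\delta^2$ from every term. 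The $L_p$-$L_q$ decay is exactly what lets these quadratic quantities be integrated in $t$ over $(0,\infty)$, while the maximal-regularity estimate closes the derivative count and yields the stated regularity.

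The main obstacle is the hyperbolic nature of the continuity equation: $\theta$ does not enjoy parabolic smoothing, so $\nabla^2\theta$ is not controlled in $L_2$ in time, and the density has to be captured exclusively through its coupling to $\bu$. This forces the $L_p$-$L_q$ decay of the compressible Stokes semigroup to play a genuinely essential role, and its derivation under non-slip boundary conditions in an exterior domain is delicate, relying on a careful spectral analysis near the acoustic part together with a cut-off argument to handle the boundary. Once that decay is installed, verifying contraction in $\CX_T$ uniformly in $T$ is largely a bookkeeping task; the only subtle point is to track the quantities $\int_0^t\|\nabla\bu(\cdot,s)\|_{L_\infty}\,ds$ produced by the Lagrangian change of coordinates and to keep them small via the decay estimate, thereby ensuring that the flow map remains a diffeomorphism of $\Omega$ for all time.
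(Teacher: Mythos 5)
Your proposal does not follow the route of the cited proof, and, more importantly, it would not establish the statement as written. Theorem \ref{thm:ES.1} is quoted from Enomoto--Shibata \cite{ES2}, and the paper records that its proof is by the energy method, essentially identical to Matsumura--Nishida's proof of Theorem \ref{thm:NS.1}, the only change being that the pointwise-in-time Sobolev bound \eqref{sob:1} is replaced by $\int_0^\infty\|\nabla\bv\|_{L_\infty(\Omega)}^2\,dt\leq C\int_0^\infty\|\nabla\bv\|_{H^2(\Omega)}^2\,dt$. What you describe instead is the Lagrangian-coordinates, maximal-regularity-plus-$L_p$-$L_q$-decay scheme that the present paper develops for its own main results (Theorems \ref{thm:main0} and \ref{mainthm:2}). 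That is a legitimate alternative philosophy, but applied to the hypotheses and conclusion of Theorem \ref{thm:ES.1} it has two genuine gaps.

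First, the regularity asserted in the conclusion is strictly higher than anything your machinery produces: the theorem claims $\theta\in C^0((0,\infty),H^2(\Omega))$ and $\nabla\bv\in L_2((0,\infty),H^2(\Omega)^9)$, i.e.\ control of second derivatives of the density and third derivatives of the velocity. The maximal $L_2$-in-time, $L_2\cap L_6$-in-space regularity class only ever controls $\theta$ in $H^1_q$ and $\bv$ in $H^2_q$ --- indeed the whole point of the paper is to get away with this lower regularity. Recovering the stated bounds would require differentiating the equations and closing higher-order estimates, which is exactly what the energy method of \cite{MN2,ES2} does and your scheme does not.

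Second, the $L_p$-$L_q$ decay estimates you rely on cannot be run under the hypothesis $(\theta_0,\bv_0)\in H^2(\Omega)^4$ alone. The decay \eqref{lp-lq} costs an $L_q$ norm of the data with $q\leq 2$, and to make the Duhamel integrals $\int_0^{t/2}(t-s)^{-\ell}\cdots\,ds$ close in a three-dimensional exterior domain the paper must take $q=r<2$, so that the exponent $\ell$ exceeds $1$ and $\ell-b>1/p$; this is precisely why Theorem \ref{thm:main0} adds the extra hypothesis $(\theta_0,\bv_0)\in H^{1,0}_r(\Omega)$ with $r=2(2+\sigma)/(4+\sigma)<2$. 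With only $L_2$ integrability at spatial infinity the rates degenerate (e.g.\ no decay at all for $\|(\theta,\bv)\|_{L_2}$ and only $t^{-1/2}$ for first derivatives in $L_2$), and the quadratic terms cannot be integrated over $(0,\infty)$. So either you must strengthen the hypothesis by an $L_r$ ($r<2$) or $L_1$ assumption --- changing the statement --- or you must abandon the decay-based scheme in favour of the energy method actually used in \cite{ES2}.
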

The method used in the proof of Enomoto and Shibata \cite{ES2} is  
essentially the same as that in
Matsumura and Nishida \cite{MN2}.  Only the difference is that 
\eqref{sob:1} is replaced by
$\int^\infty_0\|\nabla\bv\|_{L_\infty(\Omega)}^2\,dt
\leq C\int^\infty_0\|\nabla\bv\|_{H^2(\Omega)}^2\,dt$ 
in \cite{ES2}.
As a conclusion, in the $L_2$ framework the least regularity we need is that   
$\nabla \rho \in L_2((0, \infty), H^1(\Omega)^3)$ and $\nabla \bv \in 
L_2((0, \infty), H^2(\Omega)^9)$. 
In this paper, we improve this point by solving the equations \eqref{eq:1.1} 
in the $L_p$-$L_q$ maximal regularity class, that is 
 the following theorem is a main result of this paper. 
\begin{thm}\label{thm:main0} 
Let $\Omega$ be an exterior domain in $\BR^3$, whose boundary $\Gamma$
is a compact $C^2$ hypersurface and $T \in (0, \infty)$. 
Let $0 < \sigma < 1/6$ and 
$p=2$ or $p=1+\sigma$.  Let $b$ be a number defined by 
$b= (3-\sigma)/2(2+\sigma)$ when $p=2$ and $b= (1-\sigma)/2(2+\sigma)$
when $p=1+\sigma$. Let  $r= 2(2+\sigma)/(4+\sigma)=(1/2+1/(2+\sigma))^{-1}$. 
Set 
\begin{align*}
&\CI = \{(\theta_0, \bv_0) \mid \theta_0 \in ( \bigcap_{q=r, 2, 2+\sigma, 6}
H^1_q(\Omega)),  \quad \bv_0 \in (\bigcap_{q=2, 2+\sigma, 6}
 B^{2(1-1/p)}_{q,p}(\Omega)^3) \cap L_r(\Omega)^3\}, \\
&\|(\theta_0, \bv_0)\|_{\CI} = \sum_{q=2, 2+\sigma, 6} \|\theta_0\|_{H^1_q(\Omega)}
+ \sum_{q=2,2+\sigma, 6}\|\bv_0\|_{B^{2(1-1/p)}_{q,p}(\Omega)} 
+ \|(\theta_0, \bv_0)\|_{H^{1,0}_r(\Omega)}.
\end{align*}
Here and hereafter, we write $\|(\theta, \bv)\|_{H^{\ell, m}_q(\Omega)}
= \|\theta\|_{H^\ell_q(\Omega)} + \|\bv\|_{H^m_q(\Omega)}$ and 
$H^0_q(\Omega) = L_q(\Omega)$. 
 Then, there exists a small constant
$\epsilon \in (0, 1)$ independent of $T$ such that if 
initial data $(\theta_0, \bv_0) \in \CI$ satisfy the compatibility condition:
$\bv_0|_\Gamma=0$ and the smallness condition :
$\|(\theta_0, \bv_0)\|_{\CI} \leq \epsilon^2$, then problem \eqref{eq:1.1}
admits unique solutions $\rho=\rho_*+\theta$ and $\bv$ with
\begin{equation}\label{sol:1}\begin{aligned}
\theta &\in H^1_p((0, T), L_2(\Omega)) \cap L_6(\Omega))
\cap L_p((0, T), H^1_2(\Omega)) \cap H^1_6(\Omega)), \\
\bv & \in H^1_p((0, T), L_2(\Omega)^3 \cap L_6(\Omega)^3) \cap
L_p((0, T), H^2_2(\Omega)^3 \cap H^2_6(\Omega)^3).
\end{aligned}\end{equation}
Moreover, setting 
\begin{align*}
\CE_T(\theta, \bv) &
= \|<t>^b(\theta, \bv)\|_{L_\infty((0, T), L_2(\Omega) \cap L_6(\Omega))}
+ \|<t>^b\nabla(\theta, \bv)\|_{L_p((0, T), H^{0,1}_2(\Omega)
 \cap H^{0,1}_{2+\sigma}(\Omega))} \\
&+ \|<t>^b(\theta, \bv)\|_{L_p((0, T), H^{1,2}_6(\Omega))}
+ \|<t>^b\pd_t(\theta, \bv)\|_{L_p((0, T), L_2(\Omega) \cap L_6(\Omega))},
\end{align*}
we have $\CE_T(\theta, \bv) \leq \epsilon$. 
\end{thm}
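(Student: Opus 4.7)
My plan is to pass to Lagrangian coordinates so that the material derivative $\pd_t + \bv\cdot\nabla$ becomes a pure time derivative; this removes the transport nonlinearity from the continuity equation and turns \eqref{eq:1.1} into a quasilinear parabolic-hyperbolic system on the fixed domain $\Omega$. Writing $u$ for the Lagrangian velocity and $\eta$ for the density perturbation along particle paths, I obtain a system of the schematic form $\pd_t \eta + \rho_*\,\dv u = N_1$ and $\rho_*\pd_t u - \DV(\mu\bD(u)+\nu\dv u\,\bI) + \fp'(\rho_*)\nabla\eta = N_2$, posed on $\Omega$ with $u|_\Gamma = 0$ and initial data $(\theta_0, \bv_0)$, where $N_1$ and $N_2$ are at least quadratic in $(\eta, u, \nabla u, \nabla^2 u)$. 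The change of variables stays a diffeomorphism on $(0,T)$ as long as $\int_0^T \|\nabla u\|_{L_\infty}\,dt$ is small, which is built into the smallness of $\CE_T$. The theorem is then proved by applying Banach's fixed point theorem to the map $\Phi : (\bar\eta, \bar u) \mapsto (\eta, u)$ on the closed ball $\{\CE_T(\eta, u) \le \epsilon\}$, where $(\eta, u)$ solves the linearized system with right-hand sides $N_i(\bar\eta, \bar u)$.

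\textbf{Linear ingredients.} Applying $\Phi$ requires two separate inputs for the linearized compressible Stokes system with Dirichlet boundary condition on $\Gamma$: (i) $L_p((0, T); L_q(\Omega))$ maximal regularity that controls $\pd_t(\eta, u)$, $\nabla\eta$, and $\nabla^2 u$ by the right-hand sides and the initial data with constant independent of $T$, needed for $q \in \{2, 2+\sigma, 6\}$; and (ii) $L_p$-$L_q$ polynomial decay of the associated semigroup in the three dimensional exterior domain, which is what generates the weight $\langle t \rangle^b$ present in $\CE_T$. The exponent $b$ is chosen as half of the $L_r \to L_q$ decay rate for the compressible Stokes semigroup with $r = 2(2+\sigma)/(4+\sigma)$, and the $L_r$ integrability assumed on $(\theta_0, \bv_0)$ in the definition of $\CI$ is precisely what activates that decay.

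\textbf{Nonlinear bookkeeping.} With these two ingredients, the remaining work is to verify that for $(\bar\eta, \bar u)$ in the ball, the nonlinear right-hand sides $N_1, N_2$ lie in the correct function classes with norms bounded by $\CE_T(\bar\eta, \bar u)^2 \le \epsilon^2$. The three spatial indices $q = 2,\,2+\sigma,\,6$ are tuned so that H\"older's inequality together with the three-dimensional Sobolev embeddings $H^1_2 \hookrightarrow L_6$ and $H^2_6 \hookrightarrow L_\infty$ close every product: for example, a typical bilinear term $\bar u \cdot \nabla \bar\eta$ is estimated in $L_6$ by $\|\bar u\|_{L_\infty}\|\nabla\bar\eta\|_{L_6}$, and in $L_2$ by $\|\bar u\|_{L_6}\|\nabla\bar\eta\|_{L_3}$ with the latter obtained by interpolating between $q = 2$ and $q = 2+\sigma$. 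In time the weight $\langle t \rangle^b$ is distributed between the two factors using $2b$ strictly less than the total decay rate, so the product still carries a weight $\langle t \rangle^b$ and the weighted integrals $\int_0^T \langle t \rangle^{bp}\|\cdot\|^p\,dt$ are controlled by $\CE_T^p$. The split $p \in \{2, 1+\sigma\}$ reflects the two time regimes where maximal regularity and decay respectively dominate. The contraction estimate is strictly analogous, with products linear in differences $(\bar\eta_1-\bar\eta_2, \bar u_1-\bar u_2)$.

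\textbf{Main obstacle.} The core difficulty, and the real content of the proof, is obtaining bounds uniform in $T$. Maximal regularity on $(0, T)$ alone gives no temporal decay, while the decay estimate alone is too weak to absorb the quadratic nonlinearities in the norm $\CE_T$. I therefore split the time axis, treating $(0, 2)$ by maximal regularity plus trace estimates for the initial data, and $(2, T)$ by Duhamel's formula starting from $t = 1$, using the decay semigroup applied to data whose $L_r \cap H^1_2 \cap H^1_6$ norm has already been controlled by the short-time analysis. Matching the two regimes so that $\CE_T$ closes at level $\epsilon$ forces the smallness hypothesis to be of order $\epsilon^2$ rather than $\epsilon$: one power of $\epsilon$ is lost when one transfers the short-time weighted norm onto the long-time decay estimate through a nonlinear factor. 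Verifying that every term of $N_1, N_2$, including the quasilinear coefficients produced by the Lagrangian change of coordinates, respects this accounting uniformly in $T$ is the heart of the argument.
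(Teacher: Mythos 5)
Your overall architecture --- Lagrangian coordinates to remove the transport term, a Banach fixed point on the ball $\{\CE_T\le\epsilon\}$, and a linear theory combining $T$-independent maximal $L_p$-$L_q$ regularity with $L_p$-$L_q$ polynomial decay of the compressible Stokes semigroup in the exterior domain --- coincides with the paper's. The gap is at the one step where the real work happens: your treatment of the long-time regime. You propose to handle $(2,T)$ by the Duhamel formula $u(t)=T(t-1)u(1)+\int_1^tT(t-s)N(s)\,ds$ with $N$ the nonlinearity. This cannot by itself produce the highest-order components of $\CE_T$ (the weighted $L_p((0,T),H^{1,2}_6(\Omega))$ norm, $\langle t\rangle^b\nabla^2\bu$ and $\langle t\rangle^b\pd_t(\eta,\bu)$ in $L_p L_q$): for $t-s<1$ the semigroup applied to $N(s)\in L_q$ gains two derivatives only at the cost of a non-integrable singularity $(t-s)^{-1}$, so $\int_{t-1}^tT(t-s)N(s)\,ds$ is not estimable in $H^2_q$; moreover the continuity equation is hyperbolic, so no parabolic regularization is available for the density component at all. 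You would need to supplement Duhamel with maximal regularity on unit time windows and a separate mechanism for $\eta$, and none of this is indicated in your sketch.

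The paper resolves exactly this point by a different decomposition, which is its advertised novelty: the solution of the linearized problem is split as $(\eta,\bu)=(\eta_1,\bu_1)+(\eta_2,\bu_2)$, where $(\eta_1,\bu_1)$ solves the time-shifted system obtained by adding $\lambda_1(\eta_1,\rho_*\bu_1)$ to the left-hand side with the full data and forcing, and $(\eta_2,\bu_2)$ solves the compensating system with forcing $\lambda_1(\eta_1,\rho_*\bu_1)$ and zero data. The shift makes the maximal regularity estimate for $(\eta_1,\bu_1)$ hold with an arbitrary polynomial weight uniformly in $T$ (Theorems \ref{max:thm.2*} and \ref{thm:exp:5.1}), so $(\eta_1,\bu_1)$ inherits the decay of the nonlinear forcing \emph{and} lies in $H^{1,2}_q(\Omega)$ for a.e.\ $t$. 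The Duhamel integral for $(\eta_2,\bu_2)$ then has an integrand already in the graph norm, and is estimated in $H^{1,2}_q(\Omega)$ by splitting $\int_0^{t/2}+\int_{t/2}^{t-1}+\int_{t-1}^t$, using the $L_p$-$L_q$ decay on the first two pieces and the uniform boundedness of $T(\tau)$ on $H^{1,2}_q(\Omega)$ for $0<\tau<1$ on the last. Two peripheral inaccuracies in your write-up: $b$ is not half the $L_r\to L_q$ decay rate but sits just below the full rate $3/(2(2+\sigma))$, the operative constraints being $b<3/(2(2+\sigma))$ and $bp'>1$; and the two values $p=2$ and $p=1+\sigma$ are two separate instances of the theorem, not two time regimes within one proof.
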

\begin{remark} \thetag1~ $T>0$ is taken arbitrarily and $\epsilon>0$ is 
chosen independently of $T$, and so Theorem \ref{thm:main0} 
tells us the global wellposedness of equations \eqref{eq:1.1} for $(0, \infty)$
time inverval.  \\
\thetag2~
In the case $p=2$, Theorem \ref{thm:main0} gives an 
extension of Matsumura and Nishida theorem \cite{MN2}.  
Roughly speaking, if we assume that $(\theta_0, \bv_0) 
\in H^3_2(\Omega)^4$, then $(\theta_0, \bv_0) 
\in (H^1_2(\Omega) \cap H^1_6(\Omega))\times (B^1_{2,2}(\Omega) \cap 
B^1_{6,2}(\Omega))$, and so the global wellposedness holds in
the class as  
$$\theta \in H^1_2((0, T), H^1_2(\Omega) \cap H^1_6(\Omega)), 
\quad \bv \in H^1_2((0, T), L_2(\Omega)^3 \cap L_6(\Omega)^3) 
\cap L_2((0, T), H^2_2(\Omega)^3\cap H^2_6(\Omega)^3)
$$
under the additional condition: 
$(\theta_0, \bv_0) \in H^{1,0}_r(\Omega)$.

On the other hand, choosing $p=1+\sigma$ gives 
the minimal  regularity assumption of initial velocity field  
in the $L_2 \cap L_6$ framework. 
\end{remark}

As  related topics, we consider the Cauchy problem, that is 
$\Omega= \BR^3$ without boundary condition. A. Matsumura and T. Nishida
\cite{MN1} proved the global wellposedness theorem, the statement 
of which is essentially
the same as in Theorem \ref{thm:NS.1} and the proof is based on 
energy method. R.~Danchin \cite{Danchin} proved the global wellposedness
in the critical space by using the Littlewood-Paley decomposition. 
\begin{thm}[\cite{Danchin}]\label{Danchin}
 Let $\Omega= \BR^N$ $(N\geq 2)$.  Assume that $\mu > 0$ and 
$\mu+\nu > 0$.  Let $B^s = \dot B^s_{2,1}(\BR^N)$ and 
$$F^s = (L_2((0, \infty), B^s) \cap C((0, \infty), B^s \cap B^{s-1}))
\times (L_1((0, \infty), B^{s+1}) \cap C((0, \infty), B^{s-1}))^N.
$$
Then, there exists an $\epsilon > 0$ such that if initial data
$\theta_0 \in B^{N/2}(\BR^N) \cap B^{N/2-1}(\BR^N)$ and 
$\bv_0 \in B^{N/2-1}(\BR^N)^N$ satisfy the condition:
$$\|\theta_0\|_{B^{N/2}(\BR^N) \cap B^{N/2-1}(\BR^N)}
+ \|\bv_0\|_{B^{N/2-1}(\BR^N)} \leq \epsilon, $$
then problem \eqref{eq:1.1} with $\Omega=\BR^N$ and $T=\infty$ 
admits a unique solution $\rho=\rho_*+\theta$ and $\bv$ with 
$(\theta, \bv) \in F^{N/2}$.
\end{thm}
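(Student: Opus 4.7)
The plan is to follow Danchin's critical-regularity strategy in homogeneous Besov spaces $\dot B^s = \dot B^s_{2,1}(\BR^N)$, which are dictated by the scaling invariance $(\rho,\bv)(x,t)\mapsto(\rho,\lambda\bv)(\lambda x,\lambda^2 t)$ of \eqref{eq:1.1}: the density is scaling-critical in $\dot B^{N/2}$ and the velocity in $\dot B^{N/2-1}$. First, I rewrite \eqref{eq:1.1} as a perturbation around the constant state: setting $\theta=\rho-\rho_*$ (or, more convenient for the transport term, $a=\rho/\rho_*-1$), the system becomes
\begin{equation*}
\pd_t a + \dv \bv = F(a,\bv), \qquad
\pd_t \bv - \tilde\mu\Delta\bv - (\tilde\mu+\tilde\nu)\nabla\dv\bv + \gamma\nabla a = G(a,\bv),
\end{equation*}
with $\tilde\mu=\mu/\rho_*$, $\tilde\mu+\tilde\nu=(\mu+\nu)/\rho_*$, $\gamma=\fp'(\rho_*)/\rho_*$, and $F,G$ at least quadratic in $(a,\bv,\nabla\bv)$ involving $\bv\cdot\nabla a$, $\bv\cdot\nabla\bv$, $a\,\dv\bv$, and smooth functions of $a$ times $\nabla^2\bv$ or $\nabla a$.

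The second step is to carry out a sharp maximal-regularity analysis of the linear part by Littlewood--Paley decomposition. Apply $\dot\Delta_j$ to the linearized system and diagonalize the $2\times2$ symbol in the compressible direction after projecting $\bv$ onto curl-free and divergence-free parts via the Leray projector $\mathcal P$: the incompressible part $\mathcal P\bv$ satisfies a pure heat equation, while the couple $(a,\Lambda^{-1}\dv\bv)$ with $\Lambda=\sqrt{-\Delta}$ satisfies a damped-wave system whose symbol has eigenvalues behaving like $-\tilde\mu'|\xi|^2\pm i\sqrt{\gamma}|\xi|$ for $|\xi|$ small (parabolic decay with oscillation) and like two real negative eigenvalues $\sim-\tilde\mu'|\xi|^2$ and $\sim-\gamma/\tilde\mu'$ for $|\xi|$ large (one parabolic, one purely damping). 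Summing the resulting frequency-localized estimates with $\ell^1$ weights yields
\begin{equation*}
\|a\|_{L_\infty(\dot B^{N/2}\cap\dot B^{N/2-1})\cap L_2(\dot B^{N/2})}
+\|\bv\|_{L_\infty(\dot B^{N/2-1})\cap L_1(\dot B^{N/2+1})}
\lesssim\|(a_0,\bv_0)\|_{\text{data}}+\|(F,G)\|_{\text{source}},
\end{equation*}
which is precisely the maximal-regularity norm underlying $F^{N/2}$.

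The third step is to estimate the nonlinearities in these norms using Bony's paraproduct decomposition. The product laws $\|fg\|_{\dot B^{N/2-1}}\lesssim\|f\|_{\dot B^{N/2}}\|g\|_{\dot B^{N/2-1}}$ and $\|fg\|_{\dot B^{N/2}}\lesssim\|f\|_{\dot B^{N/2}}\|g\|_{\dot B^{N/2}}$ (valid because $\dot B^{N/2}_{2,1}\hookrightarrow L_\infty$ is an algebra) control the transport and convective terms; the composition lemma $\|H(a)\|_{\dot B^{N/2}}\lesssim C(\|a\|_{L_\infty})\|a\|_{\dot B^{N/2}}$ for smooth $H$ with $H(0)=0$ handles $\rho^{-1}-\rho_*^{-1}$ and $\fp(\rho)-\fp(\rho_*)-\fp'(\rho_*)\theta$. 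Since smallness of $\|a\|_{L_\infty(\dot B^{N/2})}$ keeps $\rho$ bounded away from $0$ uniformly in $(x,t)$, the quasilinear coefficients are well defined. Combining the linear estimate with these nonlinear bounds shows that the solution map is a contraction on a small ball of $F^{N/2}$, and the standard Banach fixed-point argument produces a unique global solution $(\theta,\bv)\in F^{N/2}$.

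The main obstacle is the mixed parabolic--hyperbolic character of the linearization: the density obeys a transport equation, so no smoothing is available for $a$ per se, and the $L_\infty(\dot B^{N/2})$-control of $a$ must be obtained purely through the coupling with $\dv\bv$. The crucial trick is to exploit the two different regularity levels $\dot B^{N/2}$ and $\dot B^{N/2-1}$ of $a$ simultaneously: in the low-frequency regime the damped-wave structure provides $L_2(\dot B^{N/2})$ smoothing (so that $\int\|\nabla\bv\|_{L_\infty}\,dt$ is finite and Gronwall-type transport estimates close), while in the high-frequency regime the density equation propagates $\dot B^{N/2}$ regularity via the effective unknown $w=\bv-\nabla(-\Delta)^{-1}a/\tilde\mu'$, for which the coupling becomes milder. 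Making the paraproduct estimates compatible with this low/high frequency split — and in particular verifying that no frequency regime is lost when estimating $\bv\cdot\nabla a$ and the quasilinear term $(\rho^{-1}-\rho_*^{-1})\Delta\bv$ — is the delicate technical core of the argument.
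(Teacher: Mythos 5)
This theorem is not proved in the paper at all: it is quoted verbatim from Danchin's work \cite{Danchin} as background material on the Cauchy problem, so there is no ``paper proof'' to compare against. Your proposal is therefore best judged as a summary of Danchin's own argument, and as such it is broadly faithful: the scaling-critical choice of $\dot B^{N/2}_{2,1}\times\dot B^{N/2-1}_{2,1}$, the hybrid (low/high frequency) regularity for the density, the splitting of the linearization into the heat equation for the divergence-free part and a damped-wave system for $(a,\Lambda^{-1}\dv\bv)$ with the eigenvalue asymptotics you describe, and the paraproduct/composition estimates are exactly the skeleton of the original proof.

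Two caveats are worth flagging. First, the ``effective velocity'' $w=\bv-\nabla(-\Delta)^{-1}a/\tilde\mu'$ is a later refinement (Haspot, and Danchin's subsequent simplifications); in the 2000 paper the high-frequency damping of $a$ is extracted instead from a frequency-localized Lyapunov functional for the coupled $2\times2$ system. This is a legitimate alternative route, not an error. Second, and more substantively, your closing claim that ``the solution map is a contraction on a small ball of $F^{N/2}$'' glosses over the genuine difficulty of the original argument: the transport equation for $a$ loses one derivative in stability estimates, so the map is \emph{not} obviously contractive in the solution norm itself. Danchin proves existence by a Friedrichs-type approximation plus compactness, and establishes uniqueness separately by a stability estimate at one derivative below critical regularity (with an additional logarithmic interpolation argument needed when $N=2$). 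If you intend a pure Banach fixed-point closure, you must either work with the weaker stability norm and show the approximating sequence is Cauchy there while remaining bounded in $F^{N/2}$, or explain how the derivative loss in $\bv\cdot\nabla a$ is absorbed; as written, that step is a gap.
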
 
In the case where $\Omega= \BR^3$ or $\BR^N$, there are a lot of works 
concerning \eqref{eq:1.1},  but we do not mention them
any more, 
because we are interested only in the global wellposedness
 in  exterior domains.  
For more information on references,  refer to 
Enomoto and Shibata \cite{ES1}.  

Concerning the $L_1$ in time maximal 
regularity  in exterior domains, the incompressible viscous fluid flows
has been treated by  Danchin and Mucha \cite{DM}.  
To obtain $L_1$ maximal regularity in time, we have to use $\dot B^s_{q,1}$
in space, which is slightly regular space than $H^s_q$, and the 
decay estimates for semigroup on $\dot B^s_{q,1}$ must be needed to 
controle terms arising from the cut-off procedure near the boundary.
Detailed arguments related with thses facts can be found in \cite{DM}.
To treat \eqref{eq:1.1} in an exterior domain in the $L_1$
in time maximal regularity framework, 
we have to prepare not only $L_1$ maximal regularity for
model problems in the whole space and the half space but also decay properties
of semigroup in $\dot B^s_{q,1}$, and so this will be a future work. 
From Theorem \ref{thm:main0},
we may say that problem \eqref{eq:1.1} can be solved in $L_{1+\sigma}$ in time
and $L_2\cap L_6$ in space maximal regularity class for any small 
$\sigma \in (0, 1/6)$. 

The paper is organized as follows. In Sect. 2, equations \eqref{eq:1.1} 
are rewriten in  Lagrange coordinates to eliminate $\bv\cdot\nabla\rho$
and a main result  for  equations with Lagrangian 
description is stated.   
In Sect. 3, we give a $L_p$-$L_q$ maximal regularity theorem 
in some abstract setting. In Sect. 4, we give estimates of nonlinear terms.
In Sect. 5, we prove main results stated in Sect. 2.  
In Sect. 6, 
Theorem \ref{thm:main0} is proved by using a main result in Sect. 2.
In Sect. 7, we discuss the $N$ dimensonal case. 

The main point of our 
proof is to obtain maximal regularity estimates with
decay properties of solutions to linearized
equations, the Stokes equations with non-slip conditions. 
 To explain the idea, we write linearized equations 
as $\pd_t u - Au = f$ and $u|_{t=0}=u_0$ symbolically, 
where $f$ is a function corresponding
to nonlinear terms and $A$ is an closed linear operator with domain $D(A)$. 
We write $u=u_1+u_2$, where $u_1$ is a solution to time shifted
equations:  $\pd_t u_1 + \lambda_1u_1- Au_1 = f$ and $u_1|_{t=0}=u_0$ with some large
positive number $\lambda_1$ and $u_2$ is a solution to compensating equations:
$\pd_t u_2 -Au_2 = \lambda_1u_1$ and $u_2|_{t=0} = 0$. Since the fundamental solutions
to shifted equations have exponential decay properties, $u_1$ has the same 
decay properties as these of nonlinear terms $f$.  Moreover $u_1$ belongs to
the domain of $A$ for all positive time. By Duhamel principle $u_2$ is given by  
$u_2= \lambda_1\int^t_0 T(t-s)u_1(s)\,ds$, where $\{T(t)\}_{t\geq 0}$ is a continuous 
analytic semigroup associated with $A$.  By using $L_p$-$L_q$ decay properties
of $\{T(t)\}_{t\geq 0}$ in the interval $0 < s < t-1$ and standard 
estimates of $C_0$ analytic semigroup: $\|T(t-s)u_0\|_{D(A)} \leq C\|u_0\|_{D(A)}$
for $t-1 < s < t$, where $\|\cdot\|_{D(A)}$ denotes a domain norm, 
we obtain maximal $L_p$-$L_q$ regularity of $u_2$ with decay
properties.  This method seems to be a new thought to prove the 
global wellposedness and to be applicable to many 
quasilinear problems of parabolic type or parabolic-hyperbolic mixture type
appearing in mathematical physics.

To end this section, symbols of functional spaces used in this paper are given. Let 
$L_p(\Omega)$, $H^m_p(\Omega)$ and $B^s_{q,p}(\Omega)$ denote
the standard Lebesgue spaces, Sobolev spaces and Besov spaces,
while their norms are written as $\|\cdot\|_{L_p(\Omega)}$,
$\|\cdot\|_{H^m_p(\Omega)}$ and $\|\cdot\|_{B^s_{q,p}(\Omega)}$. 
We write $H^m(\Omega) = H^m_2(\Omega)$, $H^0_q(\Omega)=L_q(\Omega)$
 and $W^s_q(\Omega)=
B^s_{q,q}(\Omega)$. For any Banach space $X$ with norm $\|\cdot\|_X$, 
$L_p((a, b), X)$ and
$H^m_p((a, b), X)$ denote respective the standard $X$-valued Lebesgue spaces
and Sobolev spaces, while their time weighted norms are defined by 
$$\|<t>^b f\|_{L_p((a, b), X)} 
= \begin{cases} \Bigl(\int^b_a(<t>^b\|f(t)\|_X)^p\,dt\Bigr)^{1/p}
\quad& (1 \leq p < \infty), \\
{\rm esssup}_{t \in (a, b)} <t>^b\|f(t)\|_X\quad &(p=\infty),
\end{cases}
$$
where $<t> = (1 + t^2)^{1/2}$. Let 
$X^n = \{ \bv=(u_1, \ldots, u_n)) \mid u_i \in X \enskip (i=1, \ldots, n)\}$, but we write
$\|\cdot\|_{X^n} = \|\cdot\|_X$ for simplicity. 
Let $H^{\ell, m}_q(\Omega) 
= \{(\rho, \bv) \mid \rho \in H^\ell_q(\Omega), \bv \in H^m_q(\Omega)^3\}$
and $\|(\rho, \bv)\|_{H^{\ell, m}_q(\Omega)} = \|\rho\|_{H^\ell_q(\Omega)}
+ \|\bv\|_{H^m_q(\Omega)}$. 
The letter $C$ denotes generic constants and $C_{a, b, \cdots}$ denotes
that constants depend on quantities $a$, $b$, $\ldots$. $C$ and $C_{a,b, \cdots}$
may change from line to line.

\section{Equations in Lagrange coordinates and statment of main results}
To prove Theorem \ref{thm:main0}, we write equations \eqref{eq:1.1} in  Lagrange
coordinates $\{y\}$. Let $\zeta=\zeta(y, t)$ and $\bu=\bu(y, t)$ be the mass density
and the velocity field in Lagrange coordinates $\{y\}$, and 
for a while we assume that 
\begin{equation}\label{lag:3} \bu \in H^1_p((0, T), L_6(\Omega)^3) \cap 
L_p((0, T), H^2_6(\Omega)^3),.
\end{equation}
and the quantity:
$\|<t>^b\nabla\bu\|_{L_p((0, T), H^1_6(\Omega)}$ is small enough
for some $b > 0$ with $bp' > 1$, where $1/p + 1/p' = 1$. 
We consider
the Lagrange transformation:
\begin{equation}\label{lag:1} x = y + \int^t_0 \bu(y, s)\,ds
\end{equation}
and  assume that  
\begin{equation}\label{lag:2}
\int^T_0\|\nabla\bu(\cdot, t)\|_{L_\infty(\Omega)}\,dt <\delta
\end{equation}
with some small number $\delta > 0$.
If $0 < \delta < 1$, then 
for $x_i = y_i + \int^t_0\bu(y_i, s)\,ds$ we have
$$|x_1-x_2| \geq (1-\int^T_0\|\nabla\bu(\cdot, t)\|_{L_\infty(\Omega)}\,dt)
|y_1-y_2|,$$
and so the correspondence \eqref{lag:1} is one to one.  Moreover, 
applying a method due to 
 Str\"ohmer \cite{Strohmer1}, we see that the correspondence \eqref{lag:1} 
 is a $C^{1+\omega}$ ($\omega \in (0, 1/2)$) diffeomorphism from $\overline{\Omega}$
onto itself for any $t \in (0, T)$.
In fact, let  $J =
\bI + \int^t_0\nabla\bu(y, s)\,ds$, which is the Jacobian of the map
defined by \eqref{lag:1},  and then by Sobolev's imbedding theorem
and H\"older's inequality 
for $\omega \in (0, 1/2)$ we have
\begin{equation}\label{lag:4}
\sup_{t \in (0, T)} \|\int^t_0\nabla\bu(\cdot, s)\,ds\|_{C^{\omega}(\overline{\Omega})}
\leq C_\omega\Bigl(\int^T_0<s>^{-bp'}\,ds\Bigr)^{1/p'}
\Bigl(\int^T_0\|<s>\nabla\bu(\cdot, s)\|_{H^1_6(\Omega)}^p\,ds\Bigr)^{1/p}<\infty
\end{equation}
and we may assume that the right hand side of \eqref{lag:4} is small enough 
and \eqref{lag:2} holds in the process of
constructing a solution. 
By \eqref{lag:1}, we have
$$\frac{\pd x}{\pd y} = \bI + \int^t_0\frac{\pd \bu}{\pd y}(y, s)\,ds, $$
and so choosing $\delta > 0$ small enough, we may assume that 
there exists a $3\times 3$ matrix $\bV_0(\bk)$ of $C^\infty$ functions of 
variables $\bk$ for $|\bk| < \delta$, where $\bk$ is a corresponding
variable to $\int^t_0\nabla\bu\,ds$, 
such that
$\frac{\pd y}{\pd x} = \bI + \bV_0(\bk)$ and $\bV_0(0) = 0$.  Let 
$V_{0ij}(\bk)$ be the $(i, j)$ th component of $3\times 3$ matrix 
$V_0(\bk)$, and then we have 
\begin{equation}\label{lag:5}
\frac{\pd}{\pd x_j} = \frac{\pd}{\pd y_j} + \sum_{j=1}^3V_{0ij}(\bk)\frac{\pd}{\pd y_j}.
\end{equation}
Let $X_t(x) = y$ be the inverse map of Lagrange transform \eqref{lag:1} and 
set $\rho(x, t) = \zeta(X_t(x), t)$ and $\bv(x, t) = \bu(X_t(x), t)$. 
Setting 
$$\CD_\dv(\bk)\nabla\bu = \sum_{i, j=1}^3V_{0ij}(\bk)\frac{\pd u_i}{\pd y_j},$$
we have $\dv\bv = \dv\bu + \CD_\dv(\bk)\bu$. 
Let $\zeta = \rho_* + \eta$, and then 
\begin{align*}
\frac{\pd}{\pd t} \rho + \dv(\rho\bu)= \frac{\pd \eta}{\pd t} + 
(\rho_* + \eta)(\dv\bu + \CD_\dv(\bk)\nabla\bu).
\end{align*}
Setting  
\begin{equation}\label{form:1}
\CD_\bD(\bk)\nabla\bu = \bV_0(\bk)\nabla \bu + (\bV_0(\bk)\nabla \bu)^\top,
\end{equation}
we have 
$\bD(\bv) = \nabla \bv + (\nabla\bv)^\top
= (\bI + \bV_0(\bk))\nabla \bu + ((\bI + \bV_0(\bk))\nabla \bu)^\top
= \bD(\bu) + \CD_\bD(\bk)\nabla\bu$.
Moreover, 
\begin{align*}\DV(\mu\bD(\bv) + \nu\dv\bv\bI)
&= (\bI+\bV_0(\bk))\nabla(\mu (\bD(\bu) + \CD_\bD(\bk)\nabla\bu)
+ \nu (\dv\bu + \CD_\dv(\bk)\nabla\bu) \\
& = \DV(\mu\bD(\bu) + \nu\dv\bu\bI) + \bV_1(\bk)\nabla^2\bu
+ (\bV_2(\bk)\int^t_0\nabla^2\bu\,ds)\nabla\bu
\end{align*}
with
\begin{equation}\label{form:2}\begin{aligned}
\bV_1(\bk)\nabla^2\bu &= \mu \CD_\bD(\bk)\nabla^2\bu 
+ \nu \CD_\dv(\bk)\nabla^2\bu \bI \\
&+ \bV_0(\bk)(\mu\nabla\bD(\bu)
+ \nu\nabla \dv\bu \bI + \mu\CD_\bD(\bk)\nabla^2\bu + 
\nu \CD_\dv(\bk)\nabla^2\bu \bI), \\
(\bV_2(\bk)\int^t_0\nabla\bu\,ds)\nabla\bu
&= (\bI+\bV_0(\bk))(\mu (d_\bk\CD_\bD(\bk)\int^t_0\nabla^2\bu\,ds)\nabla\bu
+\nu (d_\bk\CD_\dv(\bk) \int^t_0\nabla^2\bu\,ds \nabla\bu)\bI.
\end{aligned}\end{equation}
Here, $d_\bk F(\bk)$ denotes the derivative of $F$ with respect to $\bk$. 
Note that $\bV_1(0) = 0$.  Moreover, we write 
\begin{equation}\label{form:3}
\nabla \fp(\rho) 
= \fp'(\rho_*)\nabla\eta +(\fp'(\rho_*+\eta) - \fp'(\rho_*))\nabla\eta
+ \fp'(\rho_*+\eta)\bV_0(\bk)\nabla\theta.
\end{equation}
The material derivative $\pd_t\bv + \bv\cdot\nabla\bv$ is changed to $\pd_t\bu$.

Summing up, we have obtained
\begin{equation}\label{eq:2.1}\begin{aligned}
\pd_t\eta + \rho_*\dv \bu = F(\eta, \bu)&&\quad&\text{in $\Omega \times(0, T)$}, \\
\rho_*\pd_t\bu- \DV(\mu\bD(\bu) 
+ \nu\dv\bu\bI - \fp'(\rho_*)\eta) = \bG(\eta, \bu)&&
\quad&\text{in $\Omega \times(0, T)$}, \\
\bu|_\Gamma=0, \quad (\eta, \bu)|_{t=0} = (\theta_0, \bv_0)&&
\quad&\text{in $\Omega$}. 
\end{aligned}\end{equation}
Here, we have set 
\begin{equation}\label{non:linear1}\begin{aligned}
\bk &= \int^t_0\nabla\bu(\cdot, s)\,ds, \\
F(\eta, \bu)&= \rho_* \CD_\dv(\bk)\nabla\bu 
+ \eta(\dv\bu + \CD_\dv(\bk)\nabla\bu),
\\
\bG(\eta, \bu) &= \eta\pd_t\bu+ \bV_1(\bk)\nabla^2\bu 
+ (\bV_2(\bk)\int^t_0\nabla^2\bu\,ds)\nabla\bu\\
&\qquad - (\fp'(\rho_*+\eta)-\fp'(\rho_*))\nabla\eta
- \fp'(\rho_*+\eta)\bV_0(\bk)\nabla\eta 
\end{aligned}\end{equation}
and $\CD_\dv(\bk)\nabla\bu$, $\bV_1(\bk)$ and $\bV_2(\bk)$ 
have been defined in \eqref{form:1}, \eqref{form:2}
and \eqref{form:3}.  Note that $\CD_\bk(0)=0$, 
$\bV_1(0)=0$ and $g(0, 0)=0$.
The following theorem is a main result in this paper. 
\begin{thm}\label{mainthm:2}
Let $\Omega$ be an exterior domain in $\BR^3$, whose boundary $\Gamma$
is a compact $C^2$ hypersurface.  Let $0 < \sigma < 1/6$ and 
$p=2$ or $p=1+\sigma$.  Let $b$ be a number defined by 
$b= (3-\sigma)/2(2+\sigma)$ when $p=2$ and $b= (1-\sigma)/2(2+\sigma)$
when $p=1+\sigma$. Let  $r= 2(2+\sigma)/(4+\sigma)$ and let  $T \in (0, \infty]$. 
Set 
\begin{align*}
&\CI = \{(\theta_0, \bv_0) \mid \theta_0 \in ( \bigcap_{q=r, 2, 2+\sigma, 6}
H^1_q(\Omega)) \quad \bv_0 \in (\bigcap_{q=2, 2+\sigma, 6}
 B^{2(1-1/p)}_{q,p}(\Omega)^3) \cap L_r(\Omega)^3\}, \\
&\|(\theta_0, \bv_0)\|_{\CI} = \sum_{q=2, 2+\sigma, 6} \|\theta_0\|_{H^1_q(\Omega)}
+ \sum_{q=2,2+\sigma, 6}\|\bv_0\|_{B^{2(1-1/p)}_{q,p}(\Omega)} 
+ \|(\theta_0, \bv_0)\|_{H^{1,0}_r(\Omega)}.
\end{align*}
 Then, there exists a small constant
$\epsilon \in (0, 1)$ independent of $T$ such that if 
initial data $(\theta_0, \bv_0) \in X$ satisfy the compatibility condition:
$\bv_0|_\Gamma=0$ and the smallness condition :
$\|(\theta_0, \bv_0)\|_{\CI} \leq \epsilon^2$, then problem \eqref{eq:2.1}
admits unique solutions $\zeta=\rho_*+\eta$ and $\bu$ with
\begin{equation}\label{sol:1*}\begin{aligned}
\eta &\in H^1_p((0, T), H^1_2(\Omega)) \cap H^1_6(\Omega)), \\
\bu & \in H^1_p((0, T), L_2(\Omega)^3 \cap L_6(\Omega)^3) \cap
L_p((0, T), H^2_2(\Omega)^3 \cap H^2_6(\Omega)^3)
\end{aligned}\end{equation}
possessing the estimate $E_T(\eta, \bu) \leq \epsilon$.  Here, we have set
$$E_T(\eta, \bu) = \CE_T(\eta, \bu) + 
\|<t>^b\pd_t\nabla(\eta, \bu)\|_{L_p((0, T), L_q(\Omega))}$$
and $\CE_T(\eta, \bu)$ is the quantity defined in Theorem \ref{thm:main0}. 
\end{thm}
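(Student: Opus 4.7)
The plan is to prove Theorem \ref{mainthm:2} by a Banach fixed-point argument in the complete metric space
\begin{equation*}
\CX_\epsilon = \{(\eta, \bu) \text{ in the regularity class \eqref{sol:1*} with } (\eta, \bu)|_{t=0}=(\theta_0, \bv_0),\ \bu|_\Gamma=0,\ E_T(\eta, \bu) \leq \epsilon\},
\end{equation*}
equipped with the metric induced by $E_T$. For $(\eta, \bu) \in \CX_\epsilon$, I define $(\tilde\eta, \tilde\bu) = \Phi(\eta, \bu)$ to be the solution of the linearized system
\begin{align*}
\pd_t \tilde\eta + \rho_*\dv \tilde\bu &= F(\eta, \bu), \\
\rho_*\pd_t\tilde\bu - \DV(\mu\bD(\tilde\bu) + \nu \dv\tilde\bu\,\bI - \fp'(\rho_*)\tilde\eta\,\bI) &= \bG(\eta, \bu),
\end{align*}
with $\tilde\bu|_\Gamma = 0$ and $(\tilde\eta, \tilde\bu)|_{t=0} = (\theta_0, \bv_0)$. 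The goal is to show that $\Phi$ maps $\CX_\epsilon$ into itself and is a contraction for $\epsilon$ small.

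To handle $\Phi$, I would implement the splitting strategy sketched in the introduction: write $(\tilde\eta, \tilde\bu) = (\eta_1, \bu_1) + (\eta_2, \bu_2)$, where $(\eta_1, \bu_1)$ solves the time-shifted system obtained by adding $\lambda_1\eta_1$ and $\lambda_1\bu_1$ (with $\lambda_1 > 0$ large) to the left-hand sides, with initial data $(\theta_0, \bv_0)$, and $(\eta_2, \bu_2)$ solves the corresponding compensating system with right-hand sides $\lambda_1\eta_1, \lambda_1\bu_1$ and zero initial data. The shifted piece $(\eta_1, \bu_1)$ is controlled directly by the abstract $L_p$-$L_q$ maximal regularity of Sect. 3 applied at each $q \in \{2, 2+\sigma, 6\}$; because the shifted semigroup decays exponentially, the weight $\langle t\rangle^b$ can be absorbed and one gets
\begin{equation*}
\sum_{q\in\{2,6\}}\|\langle t\rangle^b(\pd_t, \nabla^2)(\eta_1, \bu_1)\|_{L_p((0,T),L_q(\Omega))} + \text{lower order}
\leq C(\|(\theta_0,\bv_0)\|_{\CI} + \CN_T(\eta, \bu)),
\end{equation*}
where $\CN_T(\eta, \bu)$ collects the weighted norms of $F, \bG$ controlled in Sect. 4. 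The compensating piece is written via Duhamel as $\lambda_1\int_0^t T(t-s)(\eta_1(s), \bu_1(s))\,ds$, where $\{T(t)\}$ is the semigroup of the compressible Stokes operator with non-slip conditions. I split the integral at $s = t-1$: on $(t-1,t)$ I apply the standard bound $\|T(t-s)w\|_{D(\CA)} \leq C\|w\|_{D(\CA)}$ together with the maximal regularity of $(\eta_1, \bu_1)$; on $(0, t-1)$ I invoke the $L_p$-$L_q$ decay estimates of $T(t)$, feeding in the $L_r$-norm of the data (which is exactly why $r = (1/2 + 1/(2+\sigma))^{-1}$ and the auxiliary index $2+\sigma$ appear in $\CI$). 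Standard Hardy-type convolution inequalities in the $\langle t\rangle^b$-weighted $L_p$-setting then produce $E_T(\tilde\eta, \tilde\bu) \leq C(\|(\theta_0,\bv_0)\|_{\CI} + \CN_T(\eta, \bu))$.

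Combining this with the nonlinear estimates from Sect. 4, which give $\CN_T(\eta, \bu) \leq C\,E_T(\eta, \bu)^2 \leq C\epsilon^2$ under the smallness assumption on $\nabla\bu$ needed to validate the Lagrange change of variables (controlled by \eqref{lag:4}), I obtain $E_T(\tilde\eta, \tilde\bu) \leq C(\epsilon^2 + \epsilon^2) \leq \epsilon$ for $\epsilon$ sufficiently small, so $\Phi(\CX_\epsilon) \subset \CX_\epsilon$. An essentially identical computation applied to the difference $\Phi(\eta, \bu) - \Phi(\eta', \bu')$ --- exploiting the polynomial structure of $F, \bG$ in $(\eta, \bu, \bk)$ and the Lipschitz dependence of $\bV_0, \bV_1, \bV_2, \fp'$ on their arguments in the $C^\omega$ norm controlled by \eqref{lag:4} --- yields contraction in a slightly weaker norm, which upgrades to the full norm by the closedness of $\CX_\epsilon$.

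The main obstacle will be the second step: establishing the $L_p$-$L_q$ decay estimate for the compensating convolution $\lambda_1\int_0^{t-1} T(t-s)(\eta_1, \bu_1)(s)\,ds$ simultaneously in the $L_2$ and $L_6$ weighted norms, when the generator is of parabolic-hyperbolic mixed type and the $\eta$-component does not gain a derivative under $T(t)$. The decay exponents of $T(t)$ acting on $L_r$ data must interact with the $\langle t\rangle^b$ weight and the $L_p$ time integration so that the resulting convolution is finite and uniform in $T$; this is where the numerology of $\sigma$, $b$, $r$, and the auxiliary index $2+\sigma$ is used in a tight way. Independence of $\epsilon$ from $T$ hinges on this estimate, and all the other ingredients (maximal regularity, nonlinear bounds, fixed-point closure) are then routine consequences.
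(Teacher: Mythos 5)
Your proposal follows essentially the same route as the paper: the same fixed-point space defined by $E_T$, the same splitting of the linearized solution into a time-shifted piece (handled by the $\CR$-bounded solution operators and weighted maximal regularity of Sect.~3) plus a compensating piece written via Duhamel and estimated by combining the $L_p$-$L_q$ decay of the compressible Stokes semigroup on $(0,t-1)$ with the standard analytic-semigroup bound on $(t-1,t)$, and the same quadratic nonlinear estimates feeding the self-map and contraction steps. The only cosmetic differences are that the paper splits the Duhamel integral into three pieces ($\int_0^{t/2}+\int_{t/2}^{t-1}+\int_{t-1}^t$) to extract the $\langle t\rangle^b$ weight, and proves contraction directly in the $E_T$ norm rather than in a weaker norm.
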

\begin{remark} 
\thetag1~ The choice of $\epsilon$ is independent of $T>0$, and so
solutions of equations \eqref{eq:2.1} exist for any time $t \in (0, \infty)$. \\
\thetag2~ 
For any natural number $m$, 
$B^m_{q, 2}(\Omega) \subset H^m_q(\Omega)$ for
$2 < q < \infty$ and $B^m_{2,2} = H^m$. 
\\
\thetag3~ The condition: $0 < \sigma < 1/6$  guarantees that 
$bp' > 1$.  \\
\thetag4~ 
Letting $\sigma>0$ be taken a small number such that 
$C^{1+\sigma} \subset H^2_6$, we see that  Theorem \ref{mainthm:2}
implies  
$$\int^T_0\|\bu(\cdot, s)\|_{C^{1+\sigma}(\Omega)}\,ds < \delta $$
with some small number  $\delta > 0$, 
which guarantees that  Lagrange transform given in \eqref{lag:1} is a $C^{1+\sigma}$ 
diffeomorphism on $\Omega$.  Moreover,  Theorem \ref{thm:main0} follows
from Theorem \ref{mainthm:2}, the proof of which will be given in
Sect. 6 below. 
\end{remark}

\section{$\CR$-bounded solution operators}

This section gives a general framework of proving the maximal $L_p$ regularity
($1 < p < \infty$), and so  
problem is formulated in an abstract setting.  Let $X$, $Y$, and $Z$ be
three UMD Banach spaces such that $X \subset Z \subset Y$
and $X$ is dense in $Y$, where the inclusions are continuous. 
 Let $A$ be a closed linear operator from $X$ into $Y$ and 
let $B$ be a linear operator from $X$ into $Y$ and also 
from $Z$ into $Y$.   Moreover, we assume that 
$$\|Ax\|_Y \leq C\|x\|_X, \quad \|Bx\|_Z \leq C\|x\|_X, 
\quad \|Bz\|_Y \leq C\|z\|_Z $$
with some constant $C$ for any $x \in X$ and $z \in Z$.  
  Let $\omega \in (0, \pi/2)$ be a fixed number and set
\begin{align*}
\Sigma_\omega &= \{\lambda \in \BC\setminus\{0\}  
\mid |\arg\lambda| < \pi-\omega\},
\quad 
\Sigma_{\omega, \lambda_0}  = \{\lambda \in \Sigma_\omega 
\mid |\lambda| \geq \lambda_0\}.
\end{align*}
We consider an abstract boundary
value problem with parameter $\lambda \in \Sigma_{\omega, \lambda_0}$:
\begin{equation}\label{1}
\lambda u - A u = f, \quad Bu = g. 
\end{equation}
Here, $Bu = g$ represents boundary conditions,  restrictions like
divergence condition for Stokes equations in the incompressible 
viscous fluid flows case, or both of them. 
The simplest example is the following:
$$\lambda u - \Delta u = f \enskip\text{in $\Omega$}, \quad
\frac{\pd u}{\pd \nu} = g \enskip\text{on $\Gamma$}, \\
$$
where $\Omega$ is a uniform $C^2$ domain in $\BR^N$, $\Gamma$ its boundary, 
$\nu$ the unit outer normal to $\Gamma$, and $\pd/\pd\nu = \nu\cdot\nabla$
with $\nabla = (\pd/\pd x_1, \ldots, \pd/\pd x_N)$ for $x=(x_1, \ldots, x_N) \in \BR^N$. 
In this case, it is standard to choose $X = H^2_q(\Omega)$, $Y = L_q(\Omega)$, 
$Z = H^1_q(\Omega)$ with $1 < q < \infty$, $A = \Delta$, and $B = \pd/\pd\nu$. 

Problem formulated in \eqref{1} is corresponding to parameter elliptic problems 
 which have been studied by Agmon \cite{Agmon}, 
Agmon, Douglis and Nirenberg \cite{ADN},
Agranovich and Visik \cite{AV}, Denk and Volevich \cite{DV} and 
references there in, 
and their arrival point is to prove the unique existence of solutions possessing
the estimate:
$$
|\lambda|\|u\|_Y + \|u\|_X 
\leq C(\|f\|_Y + |\lambda|^\alpha\|g\|_Y + \|g\|_Z)
$$
for some $\alpha \in \BR$.  From this estimate, 
we can derive the generation of a $C^0$ analytic 
semigroup associated with $A$ when $Bu=0$.  
But to prove the maximal $L_p$ regularity with $1 < p < \infty$
for the corresponding nonstationary problem:
\begin{align}\label{2}
&\pd_t v- A v = f, \quad Bv= g \quad\text{for $t > 0$}, \quad v|_{t=0} = v_0,
\end{align}
especially in the cases where $Bv=g \not=0$, 
further consideration is needed.  Below, we introduce a framework based on 
 the Weis operator valued Fourier multiplier theorem.  
To state this theorem, we make a preparation. 
\begin{dfn} Let $E$ and $F$ be two Banach spaces and let $\CL(E, F)$ be the 
set of all bounded linear operators from $E$ into $F$. We say that 
an operator family $\CT \subset \CL(E, F)$ is $\CR$ bounded if 
there exist a constant $C$ and an exponent $q \in [1, \infty)$ such that for any
integer $n$, $\{T_j\}_{j=1}^n \subset \CT$ and $\{f_j\}_{j=1}^n
\subset E$, the inequality:
$$\int^1_0\|\sum_{j=1}^n r_j(u)T_jf_j\|_F^q\,d u
\leq C\int^1_0\|\sum_{j=1}^n r_j(u)f_j\|_E^q\,du$$
is valid, where the Rademacher functions $r_k$, $k \in \BN$, are
given by $r_k: [0, 1] \to \{-1, 1\}$; $t \mapsto {\rm sign}(\sin 2^k\pi t)$.
The smallest such $C$ is called $\CR$ bound of $\CT$ on $\CL(E, F)$, 
which is denoted by $\CR_{\CL(E, F)}\CT$.
\end{dfn}
For $m(\xi) \in L_\infty(\BR\setminus\{0\}, \CL(E, F))$, we set
$$T_mf = \CF^{-1}_\xi[m(\xi)\CF[f](\xi)] \quad f \in \CS(\BR, E),$$
where $\CF$ and $\CF_\xi^{-1}$ denote respective Fourier transformation and 
inverse Fourier transformation.
\begin{thm}[Weis's operator valued Fourier multiplier theorem]
Let $E$ and $F$ be two UMD Banach spaces. 
Let $m(\xi) \in C^1(\BR\setminus\{0\}, \CL(E, F))$ and assume that 
\begin{align*}
\CR_{\CL(E, F)}(\{m(\xi) &\mid \xi \in \BR\setminus\{0\}\}) \leq r_b \\
\CR_{\CL(E, F)}(\{\xi m'(\xi) &\mid \xi \in \BR\setminus\{0\}\}) \leq r_b
\end{align*}
with some constant $r_b > 0$.  Then, for any $p \in (1, \infty)$, 
$T_m \in \CL(L_p(\BR, E), L_p(\BR, F))$ and 
$$\|T_mf\|_{L_p(\BR, F)} \leq C_pr_b\|f\|_{L_p(\BR, E)}
$$
with some constant $C_p$ depending solely on $p$.
\end{thm}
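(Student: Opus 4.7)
The proof I would carry out is Weis's original 2001 argument, whose heart is to use the UMD property to set up a randomized vector-valued Littlewood--Paley decomposition and then convert pointwise $\CR$-bounds of $m$ and $\xi m'(\xi)$ into an $L_p$-bound for $T_m$. First I would fix a smooth dyadic partition of unity $\{\phi_k\}_{k \in \BZ}$ on $\BR\setminus\{0\}$ with $\mathrm{supp}\,\phi_k \subset \{2^{k-1} \leq |\xi| \leq 2^{k+1}\}$, and let $\Delta_k = T_{\phi_k}$ be the associated scalar Littlewood--Paley projections (acting componentwise on $E$- or $F$-valued functions). Because $E$ and $F$ are UMD, the Hilbert transform is bounded on $L_p(\BR, E)$ and on $L_p(\BR, F)$, and Bourgain's vector-valued Littlewood--Paley theorem then gives the randomized equivalence
\begin{equation*}
\|g\|_{L_p(\BR, F)} \sim \Bigl(\BE\Bigl\|\sum_k r_k \Delta_k g\Bigr\|_{L_p(\BR, F)}^p\Bigr)^{1/p},
\end{equation*}
with the analogous statement for $E$ on the input side. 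This is the only place UMD is used in a deep way.

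Next I would apply this equivalence to $g = T_m f$. Since $\Delta_k$ commutes with $T_m$, one has $\Delta_k T_m f = T_{m\phi_k} \Delta_k f$, so it suffices to estimate the randomized sum $\sum_k r_k T_{m\phi_k} \Delta_k f$ in $L_p(\BR, F)$ by the corresponding randomized sum for $f$ in $L_p(\BR, E)$. The key observation is that on each dyadic annulus, the operator-valued symbol $m \phi_k$ can be reconstructed from its values $m(\xi)$ by the integration-by-parts identity
\begin{equation*}
m(\xi)\phi_k(\xi) = -\int_\xi^{\pm\infty} \bigl(m'(\eta)\phi_k(\eta) + m(\eta)\phi_k'(\eta)\bigr)\,d\eta,
\end{equation*}
so that each block $T_{m\phi_k}$ acts as a suitable average over the two operator families $\{m(\xi)\}$ and $\{\xi m'(\xi)\}$ together with the convolution kernels coming from $\phi_k$ and $\xi \phi_k'(\xi)$, which are uniformly integrable across $k$ by scaling. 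This is the step that actually uses both hypotheses on $m$.

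Now I would invoke the Kahane contraction principle and Fubini to replace the randomized sum $\sum_k r_k T_{m\phi_k} \Delta_k f$ by an integral, in the $\mathrm{Rad}$-norm, of expressions of the form $\sum_k r_k M(\xi_k) h_k(x)$ with $M(\xi_k) \in \{m(\xi), \xi m'(\xi)\}$ and $h_k$ the convolution of $\Delta_k f$ against a fixed $L_1$-kernel of the partition of unity. The $\CR$-bound hypothesis $\CR(\{m(\xi), \xi m'(\xi)\}) \leq r_b$ then yields pointwise (in $x$ and in the randomization variable from the partition-of-unity integration) the estimate
\begin{equation*}
\BE_r\Bigl\|\sum_k r_k M(\xi_k) h_k\Bigr\|_F^q \leq r_b^q \BE_r\Bigl\|\sum_k r_k h_k\Bigr\|_E^q,
\end{equation*}
and after integrating in $x$ and using Fubini plus the uniform integrability of the convolution kernels (and Kahane's inequality to absorb the change of exponent $q \to p$), one recovers the bound by the $E$-valued randomized Littlewood--Paley norm of $f$, which by the opening equivalence is $\sim \|f\|_{L_p(\BR, E)}$.

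The main obstacle, and the place where the argument is genuinely delicate, is the passage from a \emph{pointwise} $\CR$-bound of the operator family to an $L_p(\BR, F)$-bound of the singular integral: this is where UMD enters essentially (via Bourgain's theorem) and where one must be careful that the reduction to dyadic blocks only costs a universal constant in $p$, not in $r_b$ or in the geometry of $\{m(\xi)\}$. A secondary but technical point is the endpoint behavior at $\xi = 0$ and $|\xi| \to \infty$: one first proves the theorem for symbols whose support avoids a neighbourhood of $0$ and $\infty$, and then removes these restrictions by a density argument, using that the constants produced do not depend on the support.
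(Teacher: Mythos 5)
The paper does not prove this theorem: it is quoted as a known result, and the remark immediately following it simply refers the reader to Weis \cite{Weis}. So there is no internal argument to compare yours against, and the relevant question is only whether your sketch is a faithful reconstruction of the cited proof. It is. You correctly isolate the two pillars of Weis's argument: the UMD hypothesis enters solely through Bourgain's randomized Littlewood--Paley equivalence on $L_p(\BR,E)$ and $L_p(\BR,F)$, and the $\CR$-boundedness of $\{m(\xi)\}$ and $\{\xi m'(\xi)\}$ enters through the dyadic blocks $T_{m\phi_k}$, reconstructed from $m$ and $m'$ by the fundamental theorem of calculus on each annulus. One step you should make explicit rather than leave as ``acts as a suitable average'': the integration-by-parts representation only yields an operator estimate once you invoke the lemma that the strongly closed absolutely convex hull of an $\CR$-bounded family is again $\CR$-bounded (with comparable bound), so that integral means of $\{m(\eta)\}$ and $\{\eta m'(\eta)\}$ against the uniformly integrable kernels $\phi_k$, $\eta\phi_k'(\eta)$ inherit the bound $r_b$; this convex-hull lemma is the precise mechanism converting the pointwise hypothesis into a bound on the blocks, and omitting it leaves a genuine gap in an otherwise complete outline. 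Two smaller remarks: the density/truncation argument at the end needs the observation that $T_m$ is a priori defined only on $E$-valued Schwartz functions, and your one-dimensional argument is exactly what the paper needs (the theorem is applied to the time-Fourier variable in Theorems \ref{max.thm.2} and \ref{max:thm.2*}); the $N$-dimensional analogue would additionally require Pisier's property $(\alpha)$, which is irrelevant here.
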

\begin{remark} For a proof, refer to Weis \cite{Weis}. 
\end{remark}
We introduce the following assumption.   Recall that $\omega$ is a fixed number
such that $0 < \omega < \pi/2$. 
\begin{assump}\label{assump:1}  Let  $X$, $Y$ and $Z$ 
be UMD Banach spaces. There exist a constant $\lambda_0$, 
$\alpha \in \BR$, and an operator family $\CS(\lambda)$ with 
$$\CS(\lambda)\in {\rm Hol}\,(\Sigma_{\omega,\lambda_0}, 
\CL(Y\times Y \times Z, X))$$
such that for any $f \in Y$ and $g \in Z$, $u=\CS(\lambda)(f, \lambda^\alpha g, g)$
is a solution of equations \eqref{1}, and the estimates:
\begin{align*}
\CR_{\CL(Y\times Y \times Z, X)}(\{(\tau\pd_\tau)^\ell\CS(\lambda) \mid
\lambda \in \Sigma_{\omega,\lambda_0}\}) &\leq r_b \\
\CR_{\CL(Y\times Y \times Z, Y)}(\{(\tau\pd_\tau)^\ell(\lambda\CS(\lambda)) \mid
\lambda \in \Sigma_{\omega,\lambda_0}\}) &\leq r_b
\end{align*}
for $\ell=0,1$ are valid, 
where $\lambda = \gamma + i\tau \in \Sigma_{\omega, \lambda_0}$.
$\CS(\lambda)$ is called an $\CR$-bounded solution operator 
or an $\CR$ solver of equations \eqref{1}. 
\end{assump}

We now consider an initial-boundary value problem:
\begin{equation}\label{4}
\pd_t u - Au = f \quad Bu = g \quad(t>0), \quad u|_{t=0} = u_0.
\end{equation}
This problem is divided into the following two equations:
\begin{alignat}3
\pd_t u - Au &= f &\quad Bu &= g &\quad&(t \in \BR); \label{eq:1} \\
\pd_t u - Au &= 0 &\quad Bu &= 0 &\quad&(t>0), \quad u|_{t=0} = u_0.
\label{eq:2}
\end{alignat}
From the definition of $\CR$-boundedness with $n=1$ we see that 
$u=\CS(\lambda)(\bff, 0, 0)$ satisifes  equations:
$$\lambda u -Au = f, \quad Bu = 0,$$
and the estimate:
$$|\lambda|\|u\|_Y + \|u\|_X \leq C\|f\|_Y.$$
Thus, $A$ generates a $C^0$ analytic semigroup $\{T(t)\}_{t\geq 0}$ such that
$u = T(t)u_0$ solves equations \eqref{eq:2} uniquely and 
\begin{equation}\label{semi.est.1}\|u(t)\|_Y \leq r_be^{\lambda_0t}\|u_0\|_Y, 
\quad \|\pd_tu(t)\|_Y \leq r_be^{\lambda_0 t}\|u_0\|_Y,
\quad \|\pd_tu(t)\|_Y \leq r_be^{\lambda_0 t}\|u_0\|_X.
\end{equation}
These estimates and trace method of real-interpolation theory  
yield the following theorem.
\begin{thm}[Maximal regularity for initial value problem] \label{max.thm.1}
Let $1 < p < \infty$ and set $\CD= (Y, X_B)_{1-1/p, p}$, 
where $X_B = \{u_0 \in X \mid Bu_0=0\}$, and  $(\cdot, \cdot)_{1-1/p, p}$
denotes a real interpolation functor. Then, for any $u_0 \in \CD$, 
problem \eqref{eq:2} admits a unique solution $u$ with
$$e^{-\lambda_0t}u \in L_p(\BR_+, X) \cap H^1_p(\BR_+, Y)
\quad(\BR_+=(0, \infty))$$
possessing the estimate:
$$\|e^{-\lambda_0t}\pd_tu\|_{L_p(\BR_+, Y)}
+ \|e^{-\lambda_0t}u\|_{L_p(\BR_+, X)} \leq C\|u_0\|_{(Y, X)_{1-1/p, p}}.
$$ 
\end{thm}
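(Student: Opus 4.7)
The plan combines the analytic semigroup built from Assumption~\ref{assump:1} with the trace characterization of real-interpolation spaces.

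First I would extract sectoriality. Setting $X_B=\{u\in X:Bu=0\}$ and specializing the $\CR$-bound in Assumption~\ref{assump:1} with $n=1$ to triples of the form $(f,0,0)$ gives the resolvent estimate
$$|\lambda|\|u\|_Y + \|u\|_X \leq C\|f\|_Y,\qquad \lambda\in\Sigma_{\omega,\lambda_0},$$
from which I read off three facts used throughout: (i) $A|_{X_B}-\lambda_0$ is sectorial of angle less than $\pi/2$ on $Y$; (ii) the graph norm $\|u\|_Y+\|Au\|_Y$ is equivalent to $\|u\|_X$ on $X_B$; (iii) $A|_{X_B}-\lambda_0:X_B\to Y$ is a homeomorphism. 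Consequently $A$ generates the $C^0$ analytic semigroup $\{T(t)\}_{t\geq 0}$ obeying \eqref{semi.est.1}, and the unique mild solution of \eqref{eq:2} is $u(t)=T(t)u_0$.

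Next, pass to the bounded analytic semigroup $S(t)=e^{-\lambda_0 t}T(t)$ with generator $A-\lambda_0$. Writing $v(t)=e^{-\lambda_0 t}u(t)=S(t)u_0$, the desired estimate is equivalent, after absorbing the lower-order term $\lambda_0 v$ by means of (ii), to the homogeneous maximal $L_p$-regularity bound
$$\|\pd_t v\|_{L_p(\BR_+,Y)}+\|v\|_{L_p(\BR_+,X)} \leq C\|u_0\|_{(Y,X_B)_{1-1/p,p}}.$$
For this I invoke the Lions--Peetre trace characterization: $u_0\in (Y,X_B)_{1-1/p,p}$ iff $u_0=w(0)$ for some $w\in L_p(\BR_+,X_B)\cap H^1_p(\BR_+,Y)$, with the interpolation norm equivalent to the quotient norm. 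In the analytic-semigroup setting this coincides with the Da~Prato--Grisvard space $D_{A-\lambda_0}(1-1/p,p)$, characterized by $\int_0^1 \|(A-\lambda_0)S(t)u_0\|_Y^p\,dt <\infty$ together with an equivalence of norms. Taking $w(t)=S(t)u_0$ and splitting the integral over $(0,1)$ and $(1,\infty)$, the near-$0$ contribution is controlled by Da~Prato--Grisvard while the tail is handled by the semigroup estimates \eqref{semi.est.1}; combining with (ii) yields the bound on $w$ in $L_p(\BR_+,X)\cap H^1_p(\BR_+,Y)$.

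The main delicate step is the identification of the abstract space $(Y,X_B)_{1-1/p,p}$ with the Da~Prato--Grisvard space attached to the generator $A-\lambda_0$; this hinges on $X_B$ being a Banach space whose intrinsic norm matches the graph norm of $A-\lambda_0$, both of which follow from the resolvent estimate of the first paragraph. Uniqueness in the weighted class is standard: two solutions of \eqref{eq:2} differ by an orbit $T(t)w_0$ with $w_0=0$, which vanishes.
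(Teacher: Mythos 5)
Your argument is correct and follows essentially the same route as the paper: there the resolvent estimate is extracted from the $\CR$-bound of Assumption \ref{assump:1} with $n=1$, the generation of a $C^0$ analytic semigroup satisfying \eqref{semi.est.1} is concluded, and the theorem is then obtained by ``these estimates and the trace method of real-interpolation theory.'' Your writeup merely makes explicit the details the paper leaves implicit (graph-norm equivalence on $X_B$ and the Lions--Peetre/Da Prato--Grisvard trace characterization of $(Y,X_B)_{1-1/p,p}$).
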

The $\CR$-bounded solution operator plays an essential role  to prove 
the following theorem. 
\begin{thm}[Maximal regularity for boundary value problem] \label{max.thm.2}
Let $1 < p < \infty$.  Then for any $f$ and $g$ with 
$e^{-\gamma t}f \in L_p(\BR, Y)$ and $
e^{-\gamma t}g \in L_p(\BR, Z) \cap H^\alpha_p(\BR, Y)$ 
for any $\gamma > \lambda_0$,  problem \eqref{eq:1}
admits a unique solution $u$ with 
$e^{-\gamma t} u \in L_p(\BR, X) \cap H^1_p(\BR, Y)$
for any $\gamma > \lambda_0$ possessing the estimate:
\begin{align*}
&\|e^{-\lambda_0t}\pd_tu\|_{L_p(\BR_+, Y)}
+ \|e^{-\lambda_0t}u\|_{L_p(\BR_+, X)} \leq C(
\|e^{-\gamma t}f\|_{L_p(\BR, Y)} \\
&\quad + (1+\gamma)^\alpha\|e^{-\gamma t}g\|_{H^\alpha_p(\BR, Y)}
+ \|e^{-\gamma t}g\|_{L_p(\BR, Z)})
\end{align*}
for any $\gamma > \lambda_0$. Here, the constant 
$C$ may depend on $\lambda_0$ but independent of 
$\gamma$ whenever $\gamma > \lambda_0$, and we have set
$$H^\alpha_p(\BR, Y) = \{h \in \CS'(\BR, Y) \mid 
\|h\|_{H^\alpha_p(\BR, Y)} : = 
\|\CF^{-1}_\xi[(1+|\xi|^2)^{\alpha/2}\CF[f](\xi)]\|_{L_p(\BR, Y)} < \infty\}.
$$
\end{thm}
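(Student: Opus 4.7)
The plan is to reduce the inhomogeneous boundary-value problem \eqref{eq:1} to the resolvent equation \eqref{1} by a weighted Fourier transform in time, apply the $\CR$-bounded solution operator $\CS(\lambda)$ from Assumption \ref{assump:1} to write down the symbol of the solution, and then invoke the Weis operator-valued Fourier multiplier theorem to convert the $\CR$-bounds into $L_p$-in-time estimates.

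First, set $v=e^{-\gamma t}u$, $F=e^{-\gamma t}f$, $G=e^{-\gamma t}g$, whence $v$ satisfies $\pd_tv+\gamma v-Av=F$ and $Bv=G$ on $\BR$.  Taking the Fourier transform in $t$ with dual variable $\tau$ and writing $\lambda=\gamma+i\tau$, one obtains the stationary problem $\lambda\hat v-A\hat v=\hat F$, $B\hat v=\hat G$.  Because $\gamma>\lambda_0$, the entire line $\{\gamma+i\tau:\tau\in\BR\}$ lies in $\Sigma_{\omega,\lambda_0}$, so Assumption \ref{assump:1} yields
\[\hat v(\tau)=\CS(\gamma+i\tau)\bigl(\hat F(\tau),\,\lambda^\alpha\hat G(\tau),\,\hat G(\tau)\bigr).\]
Define the operator-valued symbols $M(\tau)=\CS(\gamma+i\tau)\in\CL(Y\times Y\times Z,X)$ and $N(\tau)=\lambda\CS(\gamma+i\tau)\in\CL(Y\times Y\times Z,Y)$.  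Since $\pd_\tau=i\,d/d\lambda$ along this line, the $\CR$-bounds on $(\tau\pd_\tau)^\ell\CS(\lambda)$ and $(\tau\pd_\tau)^\ell(\lambda\CS(\lambda))$ from Assumption \ref{assump:1} verify the hypotheses of the Weis multiplier theorem for $M$ and $N$.  Applying it, and using $i\tau\CS(\lambda)=\lambda\CS(\lambda)-\gamma\CS(\lambda)$ to recover $\pd_tv$, gives
\[\|v\|_{L_p(\BR,X)}+\|\pd_tv\|_{L_p(\BR,Y)}\le C\bigl(\|F\|_{L_p(\BR,Y)}+\|\CF^{-1}_\tau[\lambda^\alpha\hat G]\|_{L_p(\BR,Y)}+\|G\|_{L_p(\BR,Z)}\bigr).\]

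To handle the $\lambda^\alpha\hat G$ entry, factor $\lambda^\alpha=m_\gamma(\tau)(1+\tau^2)^{\alpha/2}$ with $m_\gamma(\tau)=(\gamma+i\tau)^\alpha(1+\tau^2)^{-\alpha/2}$, a smooth scalar symbol satisfying $|m_\gamma(\tau)|+|\tau m_\gamma'(\tau)|\le C(1+\gamma)^\alpha$ uniformly in $\tau\in\BR$ and in $\gamma>\lambda_0$.  Since $Y$ is UMD, the scalar (Mikhlin) version of the multiplier theorem gives $\|\CF^{-1}_\tau[\lambda^\alpha\hat G]\|_{L_p(\BR,Y)}\le C(1+\gamma)^\alpha\|G\|_{H^\alpha_p(\BR,Y)}$.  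Reassembling these bounds and reinstating the exponential weight $e^{\gamma t}$ on each side yields the stated maximal regularity estimate.  Uniqueness follows by Fourier-transforming the difference of two solutions with the prescribed weighted integrability and exploiting the injectivity of $\lambda-A$ on $\ker B$ that is implicit in the existence of $\CS(\lambda)$.  The main bookkeeping obstacle is tracking the $\gamma$-dependence of all constants, but this is painless because Assumption \ref{assump:1} provides $\CR$-bounds uniform over the whole sector $\Sigma_{\omega,\lambda_0}$, so the Weis-multiplier constants are independent of which vertical line inside the sector is used, beyond the explicit $(1+\gamma)^\alpha$ factor coming from $m_\gamma$.
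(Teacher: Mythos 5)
Your proposal is correct and follows essentially the same route as the paper: both pass to the line $\lambda=\gamma+i\tau$ (the paper phrases this via the Laplace transform, you via the substitution $v=e^{-\gamma t}u$ followed by the Fourier transform, which is the same thing), apply the $\CR$-solver $\CS(\lambda)$ from Assumption \ref{assump:1} to the transformed equation, and conclude with the Weis operator-valued multiplier theorem. Your explicit factorization $\lambda^\alpha=m_\gamma(\tau)(1+\tau^2)^{\alpha/2}$ is a welcome detail that the paper leaves implicit when it produces the $(1+\gamma)^\alpha$ factor in front of the $H^\alpha_p(\BR,Y)$ norm of $g$.
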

\begin{proof}
Let $\CL$ and $\CL^{-1}$ denote respective Laplace transformation and 
inverse Laplace transformation defined by setting
\begin{align*}\CL[f](\lambda) &= \int_\BR e^{-\lambda t}f(t)\,d t = 
\int_\BR e^{-i\tau t}(e^{-\gamma t}f(t))\,d t
= \CF[e^{-\gamma t}f(t)](\tau) \quad(\lambda = \gamma + i\tau),\\
\CL^{-1}[f](t) &= \frac{1}{2\pi}\int_\BR e^{\lambda t}f(\tau)\,d \tau = 
\frac{e^{\gamma t}}{2\pi}\int_\BR e^{-i\tau t}f(\tau)\,d \tau
= e^{\gamma t}\CF^{-1}[f](\tau). 
\end{align*}
We consider equations:
$$\pd_tu-Au = f, \quad Bu = g \quad\text{for $t \in \BR$}.$$
Applying Laplace transformation yields that
$$\lambda \CL[u](\lambda) -A\CL[u](\lambda) =\CL[ f](\lambda), 
\quad B\CL[u](\lambda) = \CL[g](\lambda). 
$$
Applying $\CR$-bounded solution operator $\CS(\lambda)$ yields that
$$\CL[u](\lambda) = \CS(\lambda)(\CL[f](\lambda), 
\lambda^\alpha\CL[g](\lambda), \CL[g](\lambda)),
$$
and so
$$u = \CL^{-1}[\CS(\lambda)\CL[(f, \Lambda^\alpha g, g)](\lambda)],$$
where $\Lambda^\alpha g = \CL^{-1}[\lambda^\alpha \CL[g]]$.
Moreover, 
$$\pd_tu =  \CL^{-1}[\lambda\CS(\lambda)\CL[f, \Lambda^\alpha g, g)](\lambda)].
$$
Using Fourier transformation and inverse Fourier transformation, we rewrite
\begin{align*}
u &= e^{\gamma t}\CF^{-1}[\CS(\lambda)\CF[e^{-\gamma t}(f, \Lambda^\alpha g, g)]
(\tau)](t), \\
\pd_tu &= e^{\gamma t}\CF^{-1}[\lambda\CS(\lambda)
\CF[e^{-\gamma t}(f, \Lambda^\alpha g, g)](\tau)](t).
\end{align*}
Applying the assumption of $\CR$-bounded solution operators 
and Weis's operator valued Fourier
multiplier theorem yields that 
\begin{align*}
&\|e^{-\gamma t}\pd_tu\|_{L_p(\BR, Y)} + \|e^{-\gamma t}u\|_{L_p(\BR, X)}\\
&\quad
\leq C_pr_b(\|e^{-\gamma t}f\|_{L_p(\BR, Y)} 
+ (1 + \gamma)^\alpha\|e^{-\gamma t}g\|_{H^\alpha_p(\BR, Y)}
+ \|e^{-\gamma t}g\|_{L_p(\BR, Z)})
\end{align*}
for any $\gamma > \lambda_0$.  The uniqueness follows from the generation of
analytic semigroup and Duhamel's principle. 
\end{proof}

We now consider a time shifted
equations:
\begin{equation}\label{eq:3}
\pd_tu + \lambda_1 u - Au = f, \quad Bu = g \quad\text{for $t \in (0, \infty)$},
\quad u|_{t=0} = u_0. 
\end{equation}
As a first step, we consider the following  time shifted equations without initial data
\begin{equation}\label{eq:4}
\pd_tu + \lambda_1 u - Au = f, \quad Bu = g \quad\text{for $t \in \BR$}. 
\end{equation}
Then, we have the following theorem which guarantees the polynomial decay
of solutions.  
\begin{thm}\label{max:thm.2*}
Let $\lambda_0$ be a constant appearing in Assumption \ref{assump:1} and 
let $\lambda_1 > \lambda_0$. 
Let $1 < p < \infty$ and $b \geq 0$.  
Then, for any $f$ and $g$ with 
$<t>^bf \in L_p(\BR, Y)$ and $<t>^bg \in L_p(\BR, Z) \cap H^\alpha_p(\BR, X)$, 
problem \eqref{eq:4} admits a unique solution
$w \in H^1_p((0, \infty), Y) \cap L_p((0, \infty), X)$ possessing the estimate:
\begin{equation}\label{eq:8}\begin{aligned}
&\|<t>^bw\|_{L_p((0, \infty), X)} + \|<t>^b\pd_tw\|_{L_p((0, \infty), Y)} \\
&\quad \leq C(\|<t>^bf\|_{L_p(\BR, Y)}
+ \|<t>^bg\|_{H^\alpha_p(\BR, Y)} + \|<t>^bg\|_{L_p(\BR, Z)}).
\end{aligned}\end{equation}
\end{thm}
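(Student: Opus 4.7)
My plan is to reduce Theorem \ref{max:thm.2*} to Theorem \ref{max.thm.2} in two stages: first establish the unweighted ($b=0$) estimate by applying maximal regularity to the shifted operator $A - \lambda_1 I$, then upgrade to arbitrary $b > 0$ by a commutator identity combined with induction on the integer part of $b$.

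For the base case $b = 0$, I observe that $\CS(\lambda + \lambda_1)$ provides an $\CR$-bounded solution operator for the equation $(\lambda + \lambda_1) u - Au = f$, $Bu = g$, and it is defined on $\Sigma_{\omega, \lambda_0 - \lambda_1}$. Since $\lambda_1 > \lambda_0$, this set contains the imaginary axis, and Theorem \ref{max.thm.2} applied to the shifted operator with $\gamma = 0$ yields
\begin{equation*}
\|\pd_t u\|_{L_p(\BR, Y)} + \|u\|_{L_p(\BR, X)} \leq C\bigl(\|f\|_{L_p(\BR, Y)} + \|g\|_{H^\alpha_p(\BR, Y)} + \|g\|_{L_p(\BR, Z)}\bigr),
\end{equation*}
which is \eqref{eq:8} at $b=0$.

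To upgrade to $b > 0$, I set $v = <t>^b u$. Since $A$ and $B$ act only in space, the commutator computation gives
\begin{equation*}
\pd_t v + \lambda_1 v - A v = <t>^b f + b t <t>^{b-2} u, \qquad B v = <t>^b g,
\end{equation*}
which is again an equation of the form \eqref{eq:4} for $v$. Writing $F_b$ for the right-hand side of \eqref{eq:8} and using $|b t <t>^{b-2}| \leq b <t>^{b-1}$ together with the continuous inclusion $X \hookrightarrow Y$, the base case applied to $v$ produces
\begin{equation*}
\|\pd_t v\|_{L_p(\BR, Y)} + \|v\|_{L_p(\BR, X)} \leq C\bigl(F_b + \|<t>^{b-1} u\|_{L_p(\BR, X)}\bigr).
\end{equation*}
I then proceed by induction on $\lceil b \rceil$: for $b \in [0, 1]$ the commutator term is controlled by the $b=0$ estimate since $<t>^{b-1} \leq 1$, whereas for $b > 1$ the inductive hypothesis at exponent $b - 1$ yields $\|<t>^{b-1} u\|_{L_p(\BR, X)} \leq C F_{b-1} \leq C F_b$. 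The weighted bound on $\pd_t u$ is then recovered from the identity $<t>^b \pd_t u = \pd_t v - b t <t>^{b-2} u$, and uniqueness is inherited from the base case applied to the homogeneous equation.

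The main obstacle is circularity: the commutator term $b t <t>^{b-2} u$ carries weight only one power below $<t>^b u$, so a naive bound $\|<t>^{b-1} u\|_X \leq \|<t>^b u\|_X$ would loop. The induction breaks the loop because each step strictly lowers the weight exponent by $1$, terminating at the unweighted base case after finitely many steps. Direct absorption using $\lambda_1$ is not available here because the hypothesis requires only $\lambda_1 > \lambda_0$, not $\lambda_1$ large compared to $b$. Finally, since the estimate \eqref{eq:8} is stated on $(0, \infty)$ while the data norms are on $\BR$, the claimed bound follows by restriction of the $\BR$-estimate to the positive time half-line.
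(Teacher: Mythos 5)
Your proposal is correct and follows essentially the same route as the paper: the unweighted case is obtained from the Fourier multiplier $\CS(ik+\lambda_1)$ on the shifted contour (which is exactly what invoking Theorem \ref{max.thm.2} for $A-\lambda_1$ with $\gamma=0$ amounts to), and the weighted case is handled by the identical commutator identity for $<t>^b u$ with the extra term $bt<t>^{b-2}u$ absorbed via iteration in steps of one in the exponent. The paper phrases the $b>1$ case as ``repeated use of this argument,'' which is precisely your induction on $\lceil b\rceil$.
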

\begin{proof}
Since $ik + \lambda_1 \in \Sigma_{\omega, \lambda_0}$,  for $k \in \BR$ 
we set 
$w = \CF^{-1}[\CM(ik + \lambda_1)(\CF[f], (ik)^\alpha \CF[g], \CF[g])]$,
and then $w$ satisfies equations: 
$$\pd_tw + \lambda_1w - Aw=f, \quad Bw=g \quad\text{for $t \in \BR$},
$$
and the estimate:
\begin{equation}\label{est:3.1}
\|\pd_t w\|_{L_p(\BR, Y)} + \|w\|_{L_p(\BR, X)} \leq C(\|f\|_{L_p(\BR, Y)}
+ \|g\|_{H^\alpha_p(\BR, Y)} + \|g\|_{L_p(\BR, Z)}). 
\end{equation}
This prove the theorem in the case where $b=0$.  When $0 < b \leq 1$, 
we observe that 
$$\pd_t(<t>^bw) +\lambda_1(<t>^bw) - A(<t>^bw) = <t>^bf
+ <t>^{b-2}tw, \quad B(<t>^bw) = <t>^bg,$$
and so noting that $\|<t>^{b-2}t w\|_Y \leq C\|w\|_Y \leq C\|w\|_X$, 
we have
\begin{align*}
&\|<t>^bw\|_{L_p((0, \infty), X)} + \|<t>^b\pd_tw\|_{L_p((0, \infty), Y)} \\
&\quad \leq C(\|<t>^{b-2}tw\|_{L_p(\BR, Y)} + \|<t>^bf\|_{L_p(\BR, Y)}
+ \|<t>^bg\|_{H^\alpha_p(\BR, Y)} + \|<t>^bg\|_{L_p(\BR, Z)}) \\
&\quad  \leq C( \|<t>^bf\|_{L_p(\BR, Y)}
+ \|<t>^bg\|_{H^\alpha_p(\BR, Y)} + \|<t>^bg\|_{L_p(\BR, Z)}). 
\end{align*}
If $b > 1$, then repeated use of this argument yields the theorem, which 
completes the proof of Theorem \ref{max:thm.2*}.
\end{proof}

Finally, we consider equations \eqref{eq:3}.  Let $w$ be a  solution of
\eqref{eq:4}, the unique existence of which is guaranteed by Theorem 
\ref{max:thm.2*}.  Let $v= u-w$, and then $v$ satisifes equations:
\begin{equation}\label{eq:5}
\pd_tv + \lambda_1 v - Av = 0, \quad Bv = 0 \quad\text{for $t \in (0, \infty)$},
\quad v|_{t=0} = u_0-w|_{t=0}
\end{equation}
Let $\{T(t)\}_{t\geq0}$ be a continuous analytic semigroup 
satisfying \eqref{semi.est.1}. Set $u_1 = u_0-w|_{t=0}$ and  
$v = e^{-\lambda_1 t}T(t)u_1$,  and then 
\begin{gather} \pd_t v + \lambda_1v - Av=0, \quad Bv=0, 
\quad v|_{t=0} = u_1, \label{eq:6} \\
\|v(t)\|_Y \leq r_be^{-(\lambda_1-\lambda_0)t}
\|u_1\|_Y, \, \|\pd_t v(t)\|_Y \leq r_be^{-(\lambda_1-\lambda_0)t}\|u_1\|_Y,
\, \|\pd_t v(t)\|_Y \leq r_be^{-(\lambda_1-\lambda_0)t}\|u_1\|_X.
\label{eq:7}
\end{gather}
Thus, the  trace method of real interpolation theory yields the following theorem. 
\begin{thm}\label{max.thm.1*}
Let $1 < p < \infty$ and $b>0$. Let $\CD$ be the same space as in Theorem 
\ref{max.thm.1}.  If $u_1 \in \CD$ and $f$ and $g$ satisfy the same condition
as in Theorem \ref{max:thm.2*},  then 
problem \eqref{eq:3} admits a unique solution 
$u \in L_p(\BR_+, X) \cap H^1_p(\BR_+, Y)$ $(\BR_+=(0, \infty))$
possessing the estimate:
\begin{equation}\label{eq:9}\begin{aligned}
&\|<t>^b\pd_tu\|_{L_p(\BR_+, Y)}
+ \|<t>^bu\|_{L_p(\BR_+, X)} \\
&\quad  \leq C(\|u_0\|_{(Y, X)_{1-1/p, p}}
+ \|<t>^bf\|_{L_p(\BR, Y)}
+ \|<t>^bg\|_{H^\alpha_p(\BR, Y)} + \|<t>^bg\|_{L_p(\BR, Z)}). 
\end{aligned}\end{equation}
\end{thm}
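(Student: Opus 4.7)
The plan is to execute the splitting $u = v + w$ indicated in the paragraph preceding the statement. Because $f$ and $g$ already live on $\BR$ with the weighted norms required by Theorem \ref{max:thm.2*}, that theorem supplies a function $w \in H^1_p(\BR, Y) \cap L_p(\BR, X)$ solving the whole-line shifted equation \eqref{eq:4} and satisfying the weighted bound \eqref{eq:8}. The standard time-trace theorem --- the map $w \mapsto w(0)$ is bounded from $H^1_p(\BR, Y) \cap L_p(\BR, X)$ into $(Y, X)_{1-1/p, p}$ --- then realizes $w|_{t=0}$ as a well-defined element of the interpolation space, so that $u_1 := u_0 - w|_{t=0}$ (equivalently, the $u_1$ in the hypothesis) is a legitimate element of $\CD$, the hypothesis $u_1 \in \CD$ encoding the compatibility between $u_0$, $w|_{t=0}$ and the boundary operator $B$.

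Next I would define $v(t) := e^{-\lambda_1 t} T(t) u_1$, where $\{T(t)\}_{t \geq 0}$ is the $C^0$ analytic semigroup on $Y$ generated by $A$ with domain $X_B$, whose existence is guaranteed by Assumption \ref{assump:1} applied with $g \equiv 0$ (cf.\ equations \eqref{eq:2} and the bounds \eqref{semi.est.1}). By construction $v$ solves the homogeneous shifted problem \eqref{eq:6}, so $u := v + w|_{(0, \infty)}$ solves \eqref{eq:3}. Because $\lambda_1 > \lambda_0$, \eqref{eq:7} shows that the shifted semigroup $e^{-\lambda_1 t} T(t)$ is exponentially stable on $Y$ and on $X_B$. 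The trace method of real interpolation identifies $\|u_1\|_{(Y, X_B)_{1-1/p, p}}$ up to equivalence with the $L_p(\BR_+)$-norm of the semigroup orbit in $X_B$ together with the $L_p(\BR_+)$-norm of its time derivative in $Y$, yielding
\[
\|\pd_t v\|_{L_p(\BR_+, Y)} + \|v\|_{L_p(\BR_+, X)} \leq C \|u_1\|_{\CD}.
\]
Because the exponential decay in \eqref{eq:7} dominates any polynomial factor, the same estimate persists with the weight $<t>^b$ inserted on the left.

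Combining the weighted bound \eqref{eq:8} for $w$ with the weighted bound for $v$ just obtained, and controlling $\|u_1\|_{\CD}$ by $\|u_0\|_{\CD} + \|w|_{t=0}\|_{\CD}$ via the trace embedding, produces \eqref{eq:9}. Uniqueness follows by applying the uniqueness of the analytic-semigroup evolution to the difference of two candidate solutions (which satisfies the homogeneous shifted problem with zero initial data). The step that deserves the most care, rather than being a genuine obstacle, is the real-interpolation identification used to pass from $\|u_1\|_{\CD}$ to the maximal regularity estimate for $v$ on the half-line; once that identification is in hand, the remaining work is a direct assembly of the two already established weighted estimates.
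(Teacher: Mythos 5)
Your proposal is correct and follows essentially the same route as the paper: the same decomposition $u=v+w$ with $w$ supplied by Theorem \ref{max:thm.2*}, $v=e^{-\lambda_1 t}T(t)(u_0-w|_{t=0})$ handled via the trace method of real interpolation, the exponential decay of the shifted semigroup absorbing the polynomial weight, and uniqueness via the semigroup and Duhamel's principle. No substantive differences to report.
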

\begin{proof}
Let $v = e^{-\lambda_1 t}\,T(t)u_1$, and then $v$ satisfies equations
\eqref{eq:6}.  Since $u_1 \in \CD$, by trace method of real interpolation
theorem and \eqref{eq:7}, we have 
\begin{equation}\label{est:3.2}
\|e^{(\lambda_1-\lambda_0)}v\|_{L_p((0, \infty), X)} 
+ \|e^{(\lambda_1-\lambda_0)}\pd_tv\|_{L_p((0, \infty), Y)} 
\leq C\|u_1\|_{(Y, X)_{1-1/p,p}}.
\end{equation}
Since $w$ satisfies \eqref{eq:8}, 
trace method of real interpolation theory yields that  
\begin{equation}\label{est:3.3}\begin{aligned}
\|w\|_{(Y, X)_{1-1/p,p}} &\leq C(\|w\|_{L_p((0, \infty), X)}
+ \|\pd_tw\|_{L_p((0, \infty), Y)}) \\
& \leq C(\|<t>^bf\|_{L_p(\BR, Y)}
+ \|<t>^bg\|_{H^\alpha_p(\BR, Y)} + \|<t>^bg\|_{L_p(\BR, Z)}), 
\end{aligned}\end{equation}
because $b \geq 1$. 
Thus, $u = v+w$ satisfies equations \eqref{eq:3} 
and the estimate \eqref{eq:9}.   The uniqueness of solutions follows
from the generation of continous analytic semigroup and Duhamel's principle.
This completes the proof 
of Theorem \ref{max.thm.1*}. \end{proof}
\section{Estimates of nonlinear terms}

In what follows, let $T > 0$ be any positive time and let $b$ and $p$ be
positive numbers and an exponents given in Theorem \ref{thm:main0}
and Theorem \ref{mainthm:2}. 
Let $\CU^i_\epsilon$ ($i=1,2$) be  underlying spaces for linearized equations of 
equations \eqref{eq:2.1}, which is defined by
\begin{equation}\label{eq:5.1}\begin{aligned}
&\CU^1_T = \{\theta
\in H^1_p((0, T), H^1_2(\Omega) \cap H^1_6(\Omega))\mid \theta|_{t=0}=\theta_0,
\quad \sup_{t \in (0, T)}\|\theta(\cdot,t)\|_{L_\infty(\Omega)} \leq \rho_*/2\},\\
&\CU^2_T = \{\bv \in L_p((0, T), H^2_2(\Omega)^3 
\cap H^2_6(\Omega)^3) \cap 
H^1_p((0, T), L_2(\Omega)^3 \cap L_6(\Omega)^3) \mid  \\
&\hskip6.7cm \bv|_{t=0} = \bv_0, \quad 
\int^T_0\|\nabla\bv(\cdot, s)\|_{L_\infty(\Omega)}\,ds \leq \delta\}.
\end{aligned}\end{equation}
Recall that  our energy $E_T(\eta,\bu)$ has been defined by
\begin{align*}
E_T(\eta, \bu)& =
\|<t>^{b}\nabla(\eta, \bu)\|_{L_p((0, T), H^{0,1}_2(\Omega) \cap H^{0,1}_{2+\sigma}
(\Omega))} 
+  \|<t>^{b}(\eta, \bu)\|_{L_\infty((0, T), L_2(\Omega)\cap L_6(\Omega))} 
\\
&+ \|<t>^{b}\pd_t(\eta, \bu)\|_{L_p((0, T), H^{1,0}_2(\Omega) \cap H^{1,0}_{6}
(\Omega))} 
+\|<t>^{b}(\eta, \bu)\|_{L_p((0, T), H^2_6(\Omega))}. 
\end{align*}
Note that by using a standard interpolation inequality we have
\begin{equation}\label{eq:5.2}
\|f\|_{L_{2+\sigma}(\Omega)} \leq \|f\|_{L_2(\Omega)}^{1-\sigma/4}
\|f\|_{L_6(\Omega)}^{\sigma/4}.
\end{equation}
And therefore, for $(\theta, \bv) \in \CU^1_T\times\CU^2_T$, 
we know that 
\begin{equation}\label{eq:5.3}\begin{aligned}
\|<t>^b(\theta, \bv)\|_{L_\infty((0, T), L_{2+\sigma}(\Omega)}
&\leq C_\sigma \sum_{q=2, 6}\|<t>^b(\theta, \bv)
\|_{L_\infty((0, T), L_q(\Omega))},
\\
\|<t>^b\pd_t(\theta, \bv)\|_{L_p((0, T), H^{1,0}_{2+\sigma}(\Omega)}
&\leq C_\sigma \sum_{q=2, 6}\|<t>^b\pd_t(\theta, \bv)\|_{L_p((0, T), H^{1,0}_q(\Omega))},
\end{aligned}\end{equation}
where $bp' > 1$. 
 Notice that for any $\theta \in \CU^1_T$
we have 
\begin{equation}\label{cond:1}
\rho_*/2 \leq |\rho_*+\tau\theta(y, t)| \leq 3\rho_*/2 \quad\text{for
$(y,t) \in \Omega\times(0, T)$ and $|\tau| \leq 1$}.
\end{equation}
For $\bv \in \CU^2_T$ let  
$\bk_\bv= \int^t_0\nabla\bv(\cdot,s)\,ds$, and then 
$|\bk_\bv(y, t)| \leq \delta$ for any $(y, t) \in \Omega\times(0, T)$. 
Moreover, for $q=2, 2+\sigma$ and $6$ by H\"older's inequality 
\begin{equation}\label{eq:4.4}
\sup_{t \in (0, T)}
\|\bk_\bv\|_{H^1_q(\Omega)} \leq \int^T_0\|\nabla\bv(\cdot, t)\|_{H^1_q(\Omega)} 
\leq C\Bigl(\int^\infty_0<t>^{-p'b}\Bigr)^{1/p'}\|<t>^b\nabla\bv\|_{L_p((0, T),
H^1_q(\Omega))},
\end{equation}
where $bp' > 1$. 

In what follows, for notational simplicity we use the following 
abbreviation: $\|f\|_{H^1_q(\Omega)} = \|f\|_{H^1_q}$,
$\|f\|_{L_q(\Omega)} = \|f\|_{L_q}$, $\|f\|_{L_\infty((0, T), X)}
= \|f\|_{L_\infty(X)}$, and $\|<t>^bf\|_{L_p((0, T), X)}
= \|f\|_{L_{p,b}(X)}$. 
Let $(\theta, \bv) \in \CU^1_T\times\CU^2_T$ and 
$(\theta_i, \bv_i) \in \CU^1_T\times \CU^2_T$ ($i=1,2$). 
The purpose of this section is to give necessary estimates of
$(F(\theta, \bv), \bG(\theta, \bv))$ and difference:
$(F(\theta_1, \bv_1) -F(\theta_2, \bv_2), \bG(\theta_1, \bv_1)
- \bG(\theta_2, \bv_2)))$ to prove the global wellposedness of 
equations \eqref{eq:2.1}. 
Recall that
\begin{equation}\label{nonlinear:4.1}\begin{aligned}
F(\theta, \bv) &= \rho_*\CD_\dv(\bk)\nabla\bv +\theta\dv\bv + \theta\CD_\dv(\bk)\nabla\bv,
\\
\bG(\theta, \bv) &= \theta\pd_t\bv+ \bV_1(\bk)\nabla^2\bv 
+ (\bV_2(\bk)\int^t_0\nabla^2\bv\,ds)\nabla\bv\\
&\qquad - (\fp'(\rho_*+\theta)-\fp'(\rho_*))\nabla\theta
- \fp'(\rho_*+\theta)\bV_0(\bk)\nabla\theta.
\end{aligned}\end{equation}
We start with estimating $\|F(\theta, \bv)\|_{L_{p,b}(H^1_r)}$. 
Recall that  $r^{-1} = 2^{-1} + (2+\sigma)^{-1}$ and we use the  estimates:
\begin{equation}\label{geneq:4.1}\begin{aligned}
\|fg\|_{L_{p,b}(H^1_r)} &\leq C\|f\|_{L_\infty(H^1_{2+\sigma})}
\|g\|_{L_{p,b}(H^1_2)}, \\
\|fgh\|_{L_{p,b}(H^1_r)} &\leq C(\|f\|_{L_\infty(H^1_6)}
\|g\|_{L_\infty(H^1_{2+\sigma})}
+ \|f\|_{L_\infty(H^1_{2+\sigma})}\|g\|_{L_\infty(H^1_6)})
\|h\|_{L_{p,b}(H^1_2)}, 
\end{aligned}\end{equation}
as follows from H\"older's inequality and Sobolev's inequality :
$\|f\|_{L_\infty} \leq C\|f\|_{H^1_6}$. 
Let $dG(\bk)$ denote the derivative of $G(\bk)$ with
respect to $\bk$ and  $C_\dv$ be a constan such that 
$\sup_{|\bk| < \delta}|\CD_\dv(\bk)| < C_\dv$, 
$\sup_{|\bk| < \delta}|d\CD_\dv(\bk)| < C_\dv$, and 
$\sup_{|\bk| < \delta}|d(d\CD_\dv)(\bk)| < C_\dv$.
Then,  noting $\CD_\dv(0)=0$,  by \eqref{eq:4.4}  we have
\begin{equation}\label{eq:4.0}\begin{aligned}
\|\CD_\dv(\bk_{\bv})\|_{H^1_q}
&\leq C_\dv \|\bk_{\bv}\|_{H^1_q} \leq C\|\nabla\bv\|_{L_{p,b}(H^1_q)}
\quad\text{for $\bv
\in \CU^2_T$ and $q=2, 2+\sigma$ and $6$}.
\end{aligned}\end{equation}
Moreover, for $\bv_1$, $\bv_2 \in \CU^2_T$ writing 
$$\CD_\dv(\bk_{\bv_1}) - \CD_\dv(\bk_{\bv_2})
= \int^t_0d\CD_\dv(\bk_{\bv_2}+ \tau(\bk_{\bv_1}-\bk_{\bv_2}))\,d\tau
\,(\bk_{\bv_1}-\bk_{\bv_2}), $$
and noting that 
$|\bk_{\bv_2}+ \tau(\bk_{\bv_1}-\bk_{\bv_2})| = |(1-\tau)\bk_{\bv_2}
+ \tau \bk_{\bv_1}| \leq (1-\tau)\delta + \tau \delta = \delta$, 
we have
\begin{equation}\label{eq:4.1}\begin{aligned}
&\|\CD_\dv(\bk_{\bv_1})- \CD_\dv(\bk_{\bv_2})\|_{H^1_q}\\
&\quad\leq C_\dv (\|\bk_{\bv_1}-\bk_{\bv_2}\|_{L_\infty(H^1_q)}
+ \sum_{i=1,2}\|\nabla \bk_{\bv_i}\|_{L_\infty(L_q)}
\|\bk_{\bv_1}-\bk_{\bv_2}\|_{L_\infty(L_\infty)})\\
&\quad \leq C(\|\nabla(\bv_1-\bv_2)\|_{L_{p,b}(H^1_q)}
+ \sum_{i=1,2}\|\nabla \bv_i\|_{L_{p,b}(H^1_q)}
\|\nabla(\bv_1-\bv_2)\|_{L_{p,b}(H^1_6)}. 
\end{aligned}\end{equation}
Since $\theta= \theta|_{t=0} + \int^t_0\pd_s\theta\,ds$,
for $X \in \{L_q, H^1_q\}$ with $q=2$, $2+\sigma$ and $6$  
\begin{equation}\label{eq:4.2}\begin{aligned}
\|\theta(\cdot, t)\|_{X} &\leq \|\theta_0\|_{X} 
 + \int^T_0\|(\pd_s\theta)(\cdot, s)\|_{X}\,ds
\\
&\leq\|\theta_0\|_{X} + \Bigl(\int^\infty_0<t>^{-p'b}\,dt\Bigr)^{1/p'}
\|\pd_s\theta\|_{L_{p,b}(X)}.
\end{aligned}\end{equation}
In particular, by Sobolev's inequality 
\begin{equation}\label{eq:4.3}
\|\theta(\cdot, t)\|_{L_\infty}  \leq C(\|\theta_0\|_{H^1_6}
+ \|\pd_t\theta\|_{L_{p,b}(H^1_{6})}).
\end{equation}
For $\theta \in \CU^1_T$ and $\bv \in \CU^2_T$, 
combining \eqref{geneq:4.1}, \eqref{eq:4.0}, 
\eqref{eq:4.1},
\eqref{eq:4.2}, and \eqref{eq:4.3} yields that
\begin{equation}\label{nones:4.1}\begin{aligned}
&\|F(\theta, \bv)\|_{L_{p,b}(H^1_r)} \leq C[\|\nabla\bv\|_{L_{p,b}(H^1_{2+\sigma})}
\|\nabla\bv\|_{L_{p,b}(H^1_2)} 
+ (\|\theta_0\|_{H^1_{2+\sigma}}+\|\pd_t\theta\|_{L_{p,b}(H^1_{2+\sigma})})
\|\nabla\bv\|_{L_{p,b}(H^1_2)}\\
&\quad +\{(\|\theta_0\|_{H^1_6}+\|\pd_t\theta\|_{L_{p,b}(H^1_6)})
\|\nabla\bv\|_{L_{p,b}(H^1_{2+\sigma})} 
+ (\|\theta_0\|_{H^1_{2+\sigma}}+\|\pd_t\theta\|_{L_{p,b}(H^1_{2+\sigma})})	
\|\nabla\bv\|_{L_{p,b}(H^1_6)}\}\\
&\hskip12.4cm\times\|\nabla\bv\|_{L_{p,b}(H^1_2)}].
\end{aligned}\end{equation}
Analogously, for  $\theta_i \in \CU^1_T$ and $\bv_i \in \CU^2_T$
($i=1,2$),
\allowdisplaybreaks
\begin{align}
&\|F(\theta_1, \bv_1)-F(\theta_2, \bv_2)\|_{L_{p,b}(L_r)} \nonumber \\
&\leq C[(\|\nabla(\bv_1-\bv_2)\|_{L_{p,b}(H^1_{2+\sigma})} 
+ \sum_{i=1,2}\|\nabla\bv_i\|_{L_{p,b}(H^1_{2+\sigma})}
\|\nabla(\bv_1-\bv_2)\|_{L_{p,b}(H^1_6)})\|\nabla\bv_1\|_{L_{p,b}(H^1_2)}
\nonumber\\
&+ \|\nabla\bv_2\|_{L_{p,b}(H^1_{2+\sigma})}\|\nabla(\bv_1-\bv_2)\|_{L_{p,b}(H^1_2)}
+ \|\pd_t(\theta_1-\theta_2)\|_{L_{p,b}(H^1_{2+\sigma})}
\|\nabla\bv_1\|_{L_{p,b}(H^1_2)} \nonumber\\
& + (\|\theta_0\|_{H^1_{2+\sigma}}+\|\pd_t\theta_2\|_{L_{p, b}(H^1_{2+\sigma})})
\|\nabla(\bv_1-\bv_2)\|_{L_{p,b}(H^1_2)} \nonumber\\
&
+(\|\pd_t(\theta_1-\theta_2)\|_{L_{p,b}(H^1_6)}
\|\nabla\bv_1\|_{L_{p,b}(H^1_{2+\sigma})}
+ \|\pd_t(\theta_1-\theta_2)\|_{L_{p,b}(H^1_{2+\sigma})}
\|\nabla\bv_1\|_{L_{p,b}(H^1_6)})\|\nabla\bv_1\|_{L_{p,b}(H^1_2)} \nonumber\\
&+\{(\|\theta_0\|_{H^1_6}+\|\pd_t\theta_2\|_{L_{p,b}(H^1_6)})
(\|\nabla(\bv_1-\bv_2)\|_{L_{p,b}(H^1_{2+\sigma})} 
+ \sum_{i=1,2}\|\nabla\bv_i\|_{L_{p,b}(H^1_{2+\sigma})}
\|\nabla(\bv_1-\bv_2)\|_{L_{p,b}(H^1_6))})
\nonumber\\
&+ (\|\theta_0\|_{H^1_{2+\sigma}}+\|\pd_t\theta\|_{L_{p,b}(H^1_{2+\sigma})})
(\|\nabla(\bv_1-\bv_2)\|_{L_{p,b}(H^1_6)} 
+ \sum_{i=1,2}\|\nabla\bv_i\|_{L_{p,b}(H^1_6)}
\|\nabla(\bv_1-\bv_2)\|_{L_{p,b}(H^1_6)})\} \nonumber \\
&\hskip10cm \times \|\nabla\bv_1\|_{L_{p,b}(H^1_2)} \nonumber \\
&+\{(\|\theta_0\|_{H^1_6}+\|\pd_t\theta_2\|_{L_{p,b}(H^1_6)})
\|\nabla\bv_2\|_{L_{p,b}(H^1_{2+\sigma})}
+ (\|\theta_0\|_{H^1_{2+\sigma}}+\|\pd_t\theta_2\|_{L_{p,b}(H^1_{2+\sigma})}
\|\nabla\bv_2\|_{L_{p,b}(H^1_6)}\} \nonumber \\
&\hskip10cm \times\|\nabla(\bv_1-\bv_2)\|_{L_{p,b}(H^1_2)}].
\label{nones:4.2}
\end{align}

We now estimate $\|F(\theta, \bv)\|_{L_{p,b}(H^1_q)}$ 
and $\|F(\theta_1, \bv_1) - F(\theta_2, \bv_2)\|_{L_{p, b}(H^1_q)}$
with $q=2$,  $2+\sigma$ and $6$.
For this purpose, we use the following estimates: 
\begin{align*}
\|fg\|_{L_{p, b}(H^1_q)} &\leq C\{\|f\|_{L_\infty(H^1_q)}\|g\|_{L_{p, b}(H^1_6)}
+ \|f\|_{L_\infty(H^1_q)}\|g\|_{L_{p,b}(H^1_6)}\}, \\
\|fgh\|_{L_{p, b}(H^1_q)} &\leq C\{\|f\|_{L_\infty(H^1_q)}
\|g\|_{L_\infty(H^1_6)}\|h\|_{L_{p, b}(H^1_6)}
+ \|f\|_{L_\infty(H^1_6)}\|g\|_{L_\infty(H^1_q)}\|h\|_{L_{p, b}(H^1_6)} \\
&\qquad + \|f\|_{L_\infty(H^1_6)}\|g\|_{L_\infty(H^1_6)}\|h\|_{L_{p, b}(H^1_q)}\}.
\end{align*}
And then, using \eqref{eq:4.0}, \eqref{eq:4.1}, \eqref{eq:4.2}, we have
\allowdisplaybreaks
\begin{align}
&\|F(\theta,\bv)\|_{L_{p,b}(H^1_q)}
 \leq C\{\|\nabla\bv\|_{L_{p,b}(H^1_q)}\|\nabla\bv\|_{L_{p,b}(H^1_6)}
+ (\|\theta_0\|_{H^1_q}+\|\pd_t\theta\|_{L_{p,b}(H^1_q)})\|\nabla\bv\|_{L_{p,b}(H^1_6)}
\nonumber \\
&\quad +  (\|\theta_0\|_{H^1_6}+\|\pd_t\theta\|_{L_{p,b}(H^1_6)})
\|\nabla\bv\|_{L_{p,b}(H^1_q)}
+  (\|\theta_0\|_{H^1_q}+\|\pd_t\theta\|_{L_{p,b}(H^1_q)})\|\nabla\bv\|_{L_{p,b}(H^1_6)}^2 
\nonumber \\
&\quad + (\|\theta_0\|_{H^1_6}+\|\pd_t\theta\|_{L_{p,b}(H^1_6)})\|\nabla\bv\|_{L_{p,b}(H^1_q)}
\|\nabla\bv\|_{L_{p,b}(H^1_6)}\};  \label{nones:4.3} \\
&\|F(\theta_1, \bv_1) - F(\theta_2, \bv_2)\|_{L_{p, b}(H^1_q)} \nonumber \\
&\quad \leq \{(\|\nabla(\bv_1-\bv_2)\|_{L_{p,b}(H^1_q)} + 
\sum_{i=1,2}\|\nabla\bv_i\|_{L_{p,b}(H^1_q)}\|\nabla(\bv_1-\bv_2)\|_{L_{p,b}(H^1_6)})
\|\nabla\bv_1\|_{L_{p,b}(H^1_6)} \nonumber \\
&\quad + (\|\nabla(\bv_1-\bv_2)\|_{L_{p,b}(H^1_6)} + 
\sum_{i=1,2}\|\nabla\bv_i\|_{L_{p,b}(H^1_6)}\|\nabla(\bv_1-\bv_2)\|_{L_{p,b}(H^1_6)})
\|\nabla\bv_1\|_{L_{p,b}(H^1_q)}\nonumber \\
&\quad +\|\nabla\bv_2\|_{L_{p,b}(H^1_q)}\|\nabla(\bv_1-\bv_2)\|_{L_{p,b}(H^1_6)}
+ \|\nabla\bv_2\|_{L_{p,b}(H^1_6)}\|\nabla(\bv_1-\bv_2)\|_{L_{p,b}(H^1_q)} \nonumber \\
&\quad + \|\pd_t(\theta_1-\theta_2)\|_{L_{p,b}(H^1_q)}\|\nabla\bv_1\|_{L_{p,b}(H^1_6)}
+ \|\pd_t(\theta_1-\theta_2)\|_{L_{p,b}(H^1_6)}
\|\nabla\bv_1\|_{L_{p,b}(H^1_q)} \nonumber \\ 
&\quad + (\|\theta_0\|_{H^1_q} + \|\pd_t\theta_2\|_{L_{p,b}(H^1_q)})
\|\nabla(\bv_1-\bv_2)\|_{L_{p,b}(H^1_6)}
+  (\|\theta_0\|_{H^1_6} + \|\pd_t\theta_2\|_{L_{p,b}(H^1_6)})
\|\nabla(\bv_1-\bv_2)\|_{L_{p,b}(H^1_q)}
\nonumber\\
&\quad +\|\pd_t(\theta_1-\theta_2)\|_{L_{p,b}(H^1_q)}\|\nabla\bv_1\|_{L_{p,b}(H^1_6)}^2 
+\|\pd_t(\theta_1-\theta_2)\|_{L_{p,b}(H^1_6)}\|\nabla\bv_1\|_{L_{p,b}(H^1_q)}
\|\nabla\bv_1\|_{L_{p,b}(H^1_6)} \nonumber \\
&\quad + (\|\theta_0\|_{H^1_q}+\|\pd_t\theta_2\|_{L_{p,b}(H^1_q)})(
\|\nabla(\bv_1-\bv_2)\|_{L_{p,b}(H^1_6)}
+ \sum_{i=1,2}\|\nabla\bv_1\|_{L_{p,b}(H^1_6)}\|\nabla(\bv_1-\bv_2)\|_{L_{p,b}(H^1_6)})
\nonumber \\
&\hskip13.2cm\times 
\|\nabla\bv_1\|_{L_{p,b}(H^1_6)}\nonumber \\
&\quad + (\|\theta_0\|_{H^1_6}+\|\pd_t\theta_2\|_{L_{p,b}(H^1_6)})(
\|\nabla(\bv_1-\bv_2)\|_{L_{p,b}(H^1_q)}
+ \sum_{i=1,2}\|\nabla\bv_1\|_{L_{p,b}(H^1_q)}\|\nabla(\bv_1-\bv_2)\|_{L_{p,b}(H^1_6)})
\nonumber \\
&\hskip13.2cm\times 
\|\nabla\bv_1\|_{L_{p,b}(H^1_6)}\nonumber \\
&\quad + (\|\theta_0\|_{H^1_6}+\|\pd_t\theta_2\|_{L_{p,b}(H^1_6)})(
\|\nabla(\bv_1-\bv_2)\|_{L_{p,b}(H^1_6)}
+ \sum_{i=1,2}\|\nabla\bv_1\|_{L_{p,b}(H^1_6)}\|\nabla(\bv_1-\bv_2)\|_{L_{p,b}(H^1_6)})
\nonumber \\
&\hskip13.2cm\times 
\|\nabla\bv_1\|_{L_{p,b}(H^1_q)}\nonumber \\
&\quad +(\|\theta_0\|_{H^1_q}+\|\pd_t\theta_2\|_{L_{p,b}(H^1_q)})
\|\nabla\bv_2\|_{L_{p,b}(H^1_6)}
\|\nabla(\bv_1-\bv_2)\|_{L_{p,b}(H^1_6)} \nonumber \\
&\quad +(\|\theta_0\|_{H^1_6}+\|\pd_t\theta_2\|_{L_{p,b}(H^1_6)})
\|\nabla\bv_2\|_{L_{p,b}(H^1_q)}
\|\nabla(\bv_1-\bv_2)\|_{L_{p,b}(H^1_6)} \nonumber \\
&\quad +(\|\theta_0\|_{H^1_6}+\|\pd_t\theta_2\|_{L_{p,b}(H^1_6)})
\|\nabla\bv_2\|_{L_{p,b}(H^1_6)}
\|\nabla(\bv_1-\bv_2)\|_{L_{p,b}(H^1_q)}.
\label{nones:4.4}
\end{align}

We next estimate $\|\bG(\theta, \bv)\|_{L_{p,b}(L_r)}$
and $\|\bG(\theta_1, \bv_1)-\bG(\theta_2, \bv_2)\|_{L_{p,b}(L_r)}$. For
this purpose, we use the estimates:
\begin{equation}\label{geneq:4.2}\begin{aligned}
\|fg\|_{L_{p,b}(L_r)} & \leq \|f\|_{L_\infty(L_{2+\sigma})}\|g\|_{L_{p,b}(L_2)}, \\
\|fgh\|_{L_{p,b}(L_r)} & \leq \|f\|_{L_\infty(L_\infty)}
\|g\|_{L_\infty(L_{2+\sigma)}}\|h\|_{L_{p,b}(L_2)}.
\end{aligned}\end{equation}
Employing the same argument as in \eqref{eq:4.0} and \eqref{eq:4.1}
and using $\bV_i(0)=0$ ($i=0,1$), for $i=0,1$ 
we have
\begin{equation}\label{eq:4.0.1}\begin{aligned}
&\|\bV_i(\bk)\|_{L_\infty(L_q)} \leq \sup_{|\bk| < \delta}|d\bV_i(\bk)|
\int^T_0\|\nabla\bv(\cdot, s)\|_{L_q} \leq C\|\nabla\bv\|_{L_{p,b}(L_q)}; \\
&\|\bV_i(\bk_{\bv_1}) - \bV_i(\bk_{\bv_2})\|_{L_\infty(L_q)}
\leq C\|\nabla(\bv_1-\bv_2)\|_{L_{p,b}(L_q)},
\end{aligned}\end{equation}
where $q=2, 2+\sigma$ and $6$. 
Moreover, $\|\bV_2(\bk)\|_{L_\infty(L_\infty)} = \sup_{|\bk| < \delta}|\bV_1(\bk)|$, 
\begin{align*}
&\|\bV_i(\bk)\|_{L_\infty(L_\infty)} \leq \sup_{|\bk| < \delta}|d\bV_i(\bk)|
\int^T_0\|\nabla\bv(\cdot, s)\|_{H^1_6} \leq C\|\nabla\bv\|_{L_{p,b}(H^1_6)};
\quad(i=0,1), \\
&\|\bV_i(\bk_{\bv_1}) - \bV_i(\bk_{\bv_2})\|_{L_\infty(L_\infty)}
\leq C\|\nabla(\bv_1-\bv_2)\|_{L_{p,b}(H^1_6)}
\quad(i=0,1,2) 
\end{align*}
as follows from  $|\bV_2(\bk_{\bv_1}) - \bV_2(\bk_{\bv_2})| 
\leq \sup_{|\bk| \leq \delta}|(d\bV_i)(\bk)||\bk_{\bv_1}-\bk_{\bv_2}|$. 
Writing 
\begin{align*}
\fp'(\rho_*+\theta)-\fp'(\rho_*)
&= \int^1_0\fp''(\rho_*
+\tau\theta)\,d\tau\,\theta, \\
\fp'(\rho_*+\theta_1)-\fp'(\rho_*+\theta_2) &
= \int^1_0\fp''(\rho_* + \theta_2
+\tau(\theta_1-\theta_2))\,d\tau\,(\theta_1-\theta_2),
\end{align*}
by  \eqref{cond:1} and \eqref{eq:4.2} we have
\begin{equation}\label{p-est.1}\begin{aligned}
&\|(\fp'(\rho_*+\theta)-\fp'(\rho_*))\nabla\theta\|_{L_{p,b}(L_r)}
\leq C(\|\theta_0\|_{L_{2+\sigma}}+\|\pd_t\theta\|_{L_{p,b}(L_{2+\sigma})}
\|\nabla\theta\|_{L_{p,b}(L_2)}, \\
&\|(\fp'(\rho_*+\theta_1)-\fp'(\rho_*))\nabla\theta_1
-(\fp'(\rho_*+\theta_2)-\fp'(\rho_*))\nabla\theta_2\|_{L_{p,b}(L_r)}
\\
&\quad 
\leq C\{\|\pd_t(\theta_1-\theta_2)\|_{L_{p,b}(L_{2+\sigma})}
\|\nabla\theta\|_{L_{p,b}(L_2)}
+ (\|\theta_0\|_{L_{2+\sigma}}+\|\pd_t\theta_2\|_{L_{p,b}(L_{2+\sigma})}
\|\nabla(\theta_1-\theta_2)\|_{L_{p, b}(L_2)}, \\
&\|(\fp'(\rho_*+\theta)-\fp'(\rho_*))\nabla\theta\|_{L_{p,b}(L_q)}
\leq C(\|\theta_0\|_{H^1_6}+\|\pd_t\theta\|_{L_{p,b}(H^1_6)}
\|\nabla\theta\|_{L_{p,b}(L_q)}, \\
&\|(\fp'(\rho_*+\theta_1)-\fp'(\rho_*))\nabla\theta_1
-(\fp'(\rho_*+\theta_2)-\fp'(\rho_*))\nabla\theta_2\|_{L_{p,b}(L_q)}
\\
&\quad 
\leq C\{\|\pd_t(\theta_1-\theta_2)\|_{L_{p,b}(H^1_6)}
\|\nabla\theta_1\|_{L_{p,b}(L_q)}
+ (\|\theta_0\|_{H^1_6}+\|\pd_t\theta_2\|_{L_{p,b}(H^1_6)}
\|\nabla(\theta_1-\theta_2)\|_{L_{p, b}(L_q)}, 
\end{aligned}\end{equation}
for $q=2, 2+\sigma$ and $6$. 
Combining these estimates above, we have 
\begin{align}
&\|\bG(\theta, \bv)\|_{L_{p, b}(L_r)}
\leq C\{(\|\theta_0\|_{L_{2+\sigma}}+\|\pd_t\theta\|_{L_{p,b}(L_{2+\sigma})})
(\|\pd_t\bv\|_{L_{p,b}(L_2)} + \|\nabla\theta\|_{L_{p,b}(L_2)}) \nonumber \\
&\quad + \|\nabla\bv\|_{L_{p,b}(L_{2+\sigma})}(\|\nabla^2\bv\|_{L_{p,b}(L_2)}
+ \|\nabla\theta\|_{L_{p,b}(L_2)})\};
\label{nones:4.5} \\
&\|\bG(\theta_1, \bv_1) - \bG(\theta_2, \bv_2)\|_{L_{p,b}(L_r)}
\leq C\{\|\pd_t(\theta_1-\theta_2)\|_{L_{p,b}(L_{2+\sigma})}\|\pd_t\bv_1\|_{L_{p,b}(L_2)}
\nonumber \\
&\quad + (\|\theta_0\|_{L_{2+\sigma}} + \|\pd_t\theta_2\|_{L_{p,b}(L_{2+\sigma})})
\|\pd_t(\bv_1-\bv_2)\|_{L_{p,b}(L_2)}
+ \|\nabla(\bv_1-\bv_2)\|_{L_{p,b}(L_2)}\|\nabla^2\bv_1\|_{L_{p, b}(L_{2+\sigma})}
\nonumber \\
&\quad + \|\nabla\bv_2\|_{L_{p,b}(L_{2+\sigma})}\|\nabla^2(\bv_1-\bv_2)\|_{L_{p,b}(L_2)}
+ \|\nabla(\bv_1-\bv_2)\|_{L_{p,b}(L_2)}\|\nabla^2\bv_1\|_{L_{p,b}(L_{2+\sigma})}
\|\nabla\bv_1\|_{L_{p,b}(H^1_6)} \nonumber \\
&\quad + \|\nabla^2(\bv_1-\bv_2)\|_{L_{p,b}(L_2)}\|\nabla\bv_1\|_{L_{p,b}(L_{2+\sigma})}
+ \|\nabla^2\bv_2\|_{L_{p,b}(L_{2+\sigma})}\|\nabla(\bv_1-\bv_2)\|_{L_{p,b}(L_2)}
\nonumber \\
&\quad +\|\pd_t(\theta_1-\theta_2)\|_{L_{p,b}(L_2)}
\|\nabla\theta_1\|_{L_{p,b}(L_{2+\sigma})}  
 + \|\nabla(\bv_1-\bv_2)\|_{L_{p,b}(L_2)}\|\nabla\theta_1\|_{L_{p,b}(L_{2+\sigma})} 
\nonumber \\
&\quad 
+ \|\nabla\bv_2\|_{L_{p,b}(L_{2+\sigma})}\|\nabla(\theta_1-\theta_2)\|_{L_{p,b}(L_2)}
+(\|\theta_0\|_{L_{2+\sigma}} + \|\pd_t\theta_2\|_{L_{p,b}(L_{2+\sigma})}
\|\nabla(\theta_1-\theta_2)\|_{L_{p,b}(L_2)}.  \label{nones:4.6}
\end{align}
Finally, we estimate $\|G(\theta, \bv)\|_{L_{p,b}(L_q)}$ and 
$\|G(\theta_1, \bv_1) - \bG(\theta_2, \bv_2)\|_{L_{p,b}(L_q)}$
with $q=2$,  $2+\sigma$, and $6$.
For this purpose, we use the following estimates: 
\begin{align*}
\|fg\|_{L_{p, b}(L_q)} &\leq C\|f\|_{L_\infty(H^1_q)}\|g\|_{L_{p, b}(L_q)}, \\
\|fgh\|_{L_{p, b}(H^1_q)} &\leq C\{\|f\|_{L_\infty(L_\infty)}
\|g\|_{L_\infty(H^1_6)}\|h\|_{L_{p, b}(L_q)}.
\end{align*}
And then, using \eqref{eq:4.0.1}, \eqref{p-est.1}, \eqref{eq:4.2} and \eqref{eq:4.3},
 for $q=2, 2+\sigma$ and $6$ we have
\begin{align}
&\|\bG(\theta, \bv)\|_{L_{p,b}(L_q)}
\leq C\{(\|\theta_0\|_{H^1_6} + \|\pd_t\theta\|_{L_{p,b}(H^1_6)})
(\|\pd_t\bv\|_{L_{p,b}(L_q)} + \|\nabla\theta\|_{L_{p,b}(L_q)}) \nonumber \\
&\quad +\|\nabla\bv\|_{L_{p,b}(H^1_6)}
(\|\nabla^2\bv\|_{L_{p,b}(L_q)} + \|\nabla\theta\|_{L_{p,b}(L_q)}); 
\label{nones:4.7}\\
&\|\bG(\theta_1, \bv_1) - \bG(\theta_2, \bv_2)\|_{L_{p,b}(L_q)} 
\leq C(\|\pd_t(\theta_1-\theta_2)\|_{L_{p,b}(H^1_6)}\|\pd_t\bv_1\|_{L_{p,b}(L_q)}
\nonumber \\
&\quad+(\|\theta_0\|_{H^1_6} + \|\pd_t\theta_2\|_{L_{p,b}(H^1_6)})
\|\pd_t(\bv_1-\bv_2)\|_{L_{p,b}(L_q)}
+ \|\nabla(\bv_1-\bv_2)\|_{L_{p,b}(H^1_6)}\|\nabla^2\bv_1\|_{L_{p,b}(L_q)}
\nonumber \\
&\quad + \|\nabla\bv_2\|_{L_{p,b}(H^1_6)}\|\nabla^2(\bv_1-\bv_2)\|_{L_{p,b}(L_q)}
+\|\nabla(\bv_1-\bv_2)\|_{L_{p,b}(H^1_6)}\|\nabla\bv_1\|_{L_{p,b}(H^1_6)}
\|\nabla^2\bv_1\|_{L_{p,b}(L_q)} \nonumber \\
&\quad + \|\nabla^2(\bv_1-\bv_2)\|_{L_{p,b}(L_q)}\|\nabla\bv_1\|_{L_{p,b}(H^1_6)}
+ \|\nabla^2\bv_2\|_{L_{p,b}(L_q)}\|\nabla(\bv_1-\bv_2)\|_{L_{p,b}(H^1_6)}
\nonumber \\
&\quad +\|\pd_t(\theta_1-\theta_2)\|_{L_{p,b}(H^1_6)}
\|\nabla\theta_1\|_{L_{p,b}(L_q)}  
 + \|\nabla(\bv_1-\bv_2)\|_{L_{p,b}(H^1_6)}\|\nabla\theta_1\|_{L_{p,b}(L_q)} 
\nonumber \\
&\quad 
+ \|\nabla\bv_2\|_{L_{p,b}(H^1_6)}\|\nabla(\theta_1-\theta_2)\|_{L_{p,b}(L_q)}
+(\|\theta_0\|_{H^1_6} + \|\pd_t\theta_2\|_{L_{p,b}(H^1_6)}
\|\nabla(\theta_1-\theta_2)\|_{L_{p,b}(L_q)}.
\label{nones:4.8}
\end{align}

\section{A priori estimates for solutions of linearized equations}

Let $\CV_{T, \epsilon} = \{(\theta, \bv) \in \CU^1_T\times \CU^2_T \mid 
E_T(\theta, \bv) \leq \epsilon\}$.  For $(\theta, \bv) \in \CV_{T, \epsilon}$, 
we consider linearized equations:
\begin{equation}\label{linearized:5.1}\begin{aligned}
\pd_t\eta + \rho_*\dv \bu = F(\theta, \bv)
&&\quad&\text{in $\Omega \times(0, T)$}, \\
\rho_*\pd_t\bu- \DV(\mu\bD(\bu) + \nu\dv\bu\bI - \fp'(\rho_*)\eta) 
= \bG(\theta, \bv)&&
\quad&\text{in $\Omega \times(0, T)$}, \\
\bu|_\Gamma=0, \quad (\eta, \bu)|_{t=0} = (\theta_0, \bv_0)&&
\quad&\text{in $\Omega$}. 
\end{aligned}\end{equation}
We first  show that equations \eqref{linearized:5.1} admit unique solutions
$\eta$ and $\bu$ with 
\begin{equation}\label{eq:5.4}\begin{aligned}
\eta &\in H^1_p((0, T), H^1_2(\Omega) \cap H^1_6(\Omega)), \\
\bu &\in H^1_p((0, T), L_2(\Omega)^3 \cap L_6(\Omega)^3)
\cap L_p((0, T), H^2_2(\Omega)^3\cap H^2_6(\Omega)^3)
\end{aligned}\end{equation}
possessing the estimate:
\begin{equation}\label{eq:5.5}
E_T(\eta, \bu) \leq C(\epsilon^2 + \epsilon^3)
\end{equation}
with some constant $C$ independent of $T$ and $\epsilon$. 

To prove \eqref{eq:5.5}, we divide $\eta$ and $\bu$ into two parts:
$\eta=\eta_1+ \eta_2$ and $\bu=\bu_1+\bu_2$, where $\eta_1$ and  $\bu_1$
are solutions of time shifted equations:
\begin{equation}\label{linearized:5.2}\begin{aligned}
\pd_t\eta_1 +\lambda_1\eta_1+ \rho_*\dv \bu_1= F(\theta, \bv)
&&\quad&\text{in $\Omega \times(0, T)$}, \\
\rho_*(\pd_t\bu_1+ \lambda\bu_1)
- \DV(\mu\bD(\bu_1) + \nu\dv\bu_1\bI - \fp'(\rho_*)\eta_1) = \bG(\theta, \bv)&&
\quad&\text{in $\Omega \times(0, T)$}, \\
\bu_1|_\Gamma=0, \quad (\eta_1, \bu_1)|_{t=0} = (\theta_0, \bv_0)&&
\quad&\text{in $\Omega$}, 
\end{aligned}\end{equation}
and $\eta_2$ and $\bu_2$ are solutions to  compensation equations:
\begin{equation}\label{linearized:5.3}\begin{aligned}
\pd_t\eta_2 + \rho_*\dv \bu_2= \lambda_1\eta_1
&&\quad&\text{in $\Omega \times(0, T)$}, \\
\rho_*\pd_t\bu_2 
- \DV(\mu\bD(\bu_2) + \nu\dv\bu_2\bI - \fp'(\rho_*)\eta_2) 
= \rho_*\lambda_1\bu_1&&
\quad&\text{in $\Omega \times(0, T)$}, \\
\bu_2|_\Gamma=0, \quad (\eta_2, \bu_2)|_{t=0} = (0, 0)&&
\quad&\text{in $\Omega$}. 
\end{aligned}\end{equation}
We first treat with equations \eqref{linearized:5.2}. For this purpose, we use the 
result stated in Sect. 3.  We consider a resolvent problem corresponding to 
equations \eqref{linearized:5.1} given as follows:
\begin{equation}\label{resolvent:5.1}\begin{aligned}
\lambda \zeta + \rho_*\dv \bw = f &&\quad&\text{in $\Omega$}, \\
\rho_*\lambda \bw- \DV(\mu\bD(\bw) + \nu\dv\bw\bI - \fp'(\rho_*)\zeta) = \bg&&
\quad&\text{in $\Omega$}, \\
\bw|_\Gamma=0&. 
\end{aligned}\end{equation}
Enomoto and Shibata \cite{ES1} proved the existence 
of $\CR$ bounded solution operators 
associated with \eqref{resolvent:5.1}.  Namely, we know the following theorem. 
\begin{thm} \label{thm:5.1}
Let $\Omega$ be a uniform $C^2$ domain in 
$\BR^N$. Let $0 < \omega < \pi/2$ and $1 < q < \infty$.  Set 
$H^{1,0}_q(\Omega) = H^1_q(\Omega)\times L_q(\Omega)^3$  
and $H^{1,2}_q(\Omega) = H^1_q(\Omega)\times H^2_q(\Omega)^3$. 
Then, there exist a large number $\lambda_0 > 0$ and 
operator families $\CP(\lambda)$ and $\CS(\lambda)$ with
$$\CP(\lambda) \in {\rm Hol}\,(\Sigma_{\omega, \lambda_0}, 
\CL(H^{1,0}_q(\Omega), H^1_q(\Omega))),
\quad
\CS(\lambda) \in {\rm Hol}\,(\Sigma_{\omega, \lambda_0}, 
\CL(H^{1,0}_q(\Omega), H^2_q(\Omega))$$
such that for any $\lambda \in \Sigma_{\omega, \lambda_0}$
and $(f, \bg) \in H^{1,0}_q(\Omega)$, $\zeta = \CP(\lambda)(f, \bg)$
and $\bw = \CS(\lambda)(f, \bg)$ are unique solutions of Stokes 
resolvent problem \eqref{resolvent:5.1} and 
\begin{align*}
\CR_{\CL(H^{1,0}_q(\Omega), H^1_q(\Omega))}(\{(\tau\pd_\tau)^\ell
(\lambda^k \CP(\lambda)) \mid \lambda \in \Sigma_{\omega, \lambda_0}\})
\leq r_b, \\
\CR_{\CL(H^{1,0}_q(\Omega), H^{2-j}_q(\Omega)^3)}(\{(\tau\pd_\tau)^\ell
(\lambda^{j/2}\CS(\lambda)) \mid \lambda \in \Sigma_{\omega, \lambda_0}\})
\leq r_b
\end{align*}
for $\ell=0,1$, $k=0,1$ and $j = 0,1,2$.  
\end{thm}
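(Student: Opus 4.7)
The plan is to prove Theorem~\ref{thm:5.1} along the standard route for constructing $\CR$-bounded solution operators to boundary-value resolvent problems in uniform $C^2$ domains: first treat the two model problems (whole space and half space), then localize. Throughout, the key functional-analytic fact I will repeatedly use is that Fourier multipliers of Mikhlin class taking values in $\CL(E,F)$ generate $\CR$-bounded families, and that $\CR$-boundedness is preserved under sums, compositions, and integrations against scalar kernels with integrable bounds.

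First I would handle the whole space $\BR^N$. Writing the compressible resolvent system in Fourier variables, the matrix symbol can be inverted explicitly; the presence of only $\lambda\zeta$ (and not $\lambda^2\zeta$ or $\Delta \zeta$) in the continuity equation means that $\zeta = \lambda^{-1}(f - \rho_*\dv \bw)$, so $\zeta$ effectively gains only one derivative, whereas $\bw$ gains two. Substituting back into the momentum equation yields a parabolic system for $\bw$ alone, whose resolvent has a symbol of the form $(\lambda\rho_*\bI + \mu|\xi|^2\bI + (\mu+\nu)\xi\xi^\top + \fp'(\rho_*)\rho_*|\xi|^2\lambda^{-1}\xi\xi^\top )^{-1}$ on $\Sigma_{\omega,\lambda_0}$ for $\lambda_0$ large. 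A symbol analysis shows that $(\tau\pd_\tau)^\ell(\lambda^{j/2}\CS_0(\lambda))$ and $(\tau\pd_\tau)^\ell(\lambda^k\CP_0(\lambda))$ are of Mikhlin class uniformly in $\lambda\in\Sigma_{\omega,\lambda_0}$, which via the $\CR$-boundedness version of Mikhlin's theorem yields the $\CR$-bounds.

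Next I would do the half-space $\BR^N_+$ with $\bw|_{x_N=0}=0$. Take the whole-space solution from step one on a suitable extension of the data and correct the boundary trace by solving an auxiliary problem with vanishing interior data but prescribed boundary data. Using partial Fourier transform in $x'=(x_1,\ldots,x_{N-1})$, the correction reduces to a family of ODEs in $x_N$ whose characteristic roots have positive real parts of order $\sqrt{|\lambda|}$ or $|\xi'|$; writing the solution via these roots and carrying out the analysis of Shibata-style representations, one shows the resulting operator family again has symbols of Mikhlin class in the parameter $\lambda$, giving the required $\CR$-bounds. This is where the mixed parabolic-hyperbolic scaling must be tracked most carefully.

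Finally I would localize. Pick a finite partition of unity $\{\varphi_j\}$ adapted to a covering $\{B_j\}$ of $\overline{\Omega}$ where each $B_j\cap\Omega$ is either contained in the interior or is a $C^2$ bent half-space; in the latter charts, a $C^2$-diffeomorphism straightens the boundary, conjugating the operator to a perturbation of the half-space model by variable-coefficient terms that are small on small balls. Applying the model solution operators of steps one and two in each chart and summing against $\{\varphi_j\}$ produces an approximate solution; the commutators $[\varphi_j,\cdot]$ with the differential operators and the bent-chart coefficient perturbations together form a bounded perturbation $\CK(\lambda)$ whose $\CR$-bound tends to $0$ as $|\lambda|\to\infty$. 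Taking $\lambda_0$ large enough so that $\CR\{\CK(\lambda)\}<1/2$ allows a Neumann-series inversion inside the $\CR$-boundedness framework (using that sums and compositions of $\CR$-bounded families are $\CR$-bounded), which upgrades the approximate inverse to an exact one with the claimed $\CR$-bounds. The main obstacle is the bookkeeping: one must simultaneously track the two different scalings $\lambda^k$ for $\CP(\lambda)$ and $\lambda^{j/2}$ for $\CS(\lambda)$ through every step, ensuring that the boundary-correction terms and the localization commutators preserve these weights; the hyperbolic character of the continuity equation forces us to carry the $\lambda^{-1}\dv\bw$ term explicitly and to verify at each stage that the $H^1_q$-regularity of $\zeta$ is not lost in passage from half-space to bent chart.
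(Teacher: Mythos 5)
The paper does not actually prove Theorem \ref{thm:5.1}: it is imported verbatim from Enomoto--Shibata \cite{ES1} (``Enomoto and Shibata \cite{ES1} proved the existence of $\CR$ bounded solution operators\dots''), so there is no internal proof to compare against. Your outline --- eliminate $\zeta=\lambda^{-1}(f-\rho_*\dv\bw)$ to get a closed second-order system for $\bw$, treat the whole-space and half-space models by operator-valued Mikhlin-class symbol analysis, then localize over a covering adapted to the uniform $C^2$ structure and close with a Neumann series inside the $\CR$-bounded calculus --- is precisely the strategy carried out in \cite{ES1}, so the approach is the standard and correct one; I only note the stray factor in your whole-space symbol (the pressure term contributes $\fp'(\rho_*)\rho_*\lambda^{-1}\xi\xi^\top$, not $\fp'(\rho_*)\rho_*|\xi|^2\lambda^{-1}\xi\xi^\top$) and that everything past the whole-space step is a roadmap whose hard analytic content (the half-space boundary-symbol estimates and the non-smoothing $\zeta$-component under localization) is exactly what \cite{ES1} is devoted to.
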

In view of Theorem \ref{thm:5.1} and consideration in Sect. 3, 
there exists a continuous analytic semigroup
$\{S(t)\}_{t\geq 0}$ associated with equations \eqref{linearized:5.2} such 
that
\begin{equation}\label{exp:5.1}
\|S(t)\|_{H^{1,0}_q(\Omega)} \leq C_qe^{-\lambda_2t}\|(f, \bg)\|_{H^{1,0}_q(\Omega)}
\end{equation}
for any $t> 0$ and $(f, \bg) \in H^{1,0}_q(\Omega)$ with some constant
$\lambda_2 > 0$.   Moreover, from
Theorem \ref{max:thm.2*} we have the following theorem.
\begin{thm}\label{thm:exp:5.1} Let $1<p, q < \infty$. 
Let  $b \geq 0$.  Then, there exists a large constant 
$\lambda_1 > 0$ such that for any $(f, \bg)$ with 
$<t>^b(f, \bg) \in L_p(\BR, H^{1,0}_q)$ and 
initial data $(\theta_0, \bv_0) \in H^1_q(\Omega)
\times B^{2(1-1/p)}_{q,p}(\Omega)^3$ satisfying the compatibility condition:
$\bv_0|_\Gamma=0$, problem:
\begin{equation}\label{linearized:5.4}\begin{aligned}
\pd_t\rho +\lambda_1\rho+ \rho_*\dv \bw= f
&&\quad&\text{in $\Omega \times(0, T)$}, \\
\rho_*(\pd_t\bw+ \lambda_1\bw)
- \DV(\mu\bD(\bw) + \nu\dv\bw\bI - \fp'(\rho_*)\rho) = \bg&&
\quad&\text{in $\Omega \times(0, T)$}, \\
\bw|_\Gamma=0, \quad (\rho, \bw)|_{t=0} = (\theta_0, \bv_0)&&
\quad&\text{in $\Omega$}, 
\end{aligned}\end{equation}
admits unique solutions $\rho\in H^1_p((0, T), H^1_q(\Omega))$ 
and $\bw\in H^1_p((0, T), L_q(\Omega)^3) \cap L_p((0, T), H^2_q(\Omega)^3)$ 
possessing the estimate:
\begin{align*}
&\|<t>^b(\rho, \pd_t\rho)\|_{L_p((0, T), H^1_q(\Omega))}
+ \|<t>^b\pd_t\bw\|_{L_p((0, T), L_q(\Omega))} 
+ \|<t>^b\bw\|_{L_p((0, T), H^2_q(\Omega))} \\
&\quad \leq C(\|\theta_0\|_{H^1_q(\Omega)} + \|\bv_0\|_{B^{2(1-1/p)}_{q, p}(\Omega)}
+ \|<t>^b(f, \bg)\|_{L_p((0, T), H^{1,0}_q(\Omega))}).
\end{align*}
Here, $C$ is a constant independent of $T>0$. 
\end{thm}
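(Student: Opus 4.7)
The plan is to derive Theorem \ref{thm:exp:5.1} directly from the abstract framework of Section 3 applied to the compressible Stokes system, with the $\CR$-bounded solution operators supplied by Theorem \ref{thm:5.1}. First I would recast \eqref{linearized:5.4} in the abstract form $\pd_t u + \lambda_1 u - Au = F$, $Bu = 0$, $u|_{t=0} = u_0$ by setting $u = (\rho, \bw)$, $u_0 = (\theta_0, \bv_0)$, $F = (f, \rho_*^{-1}\bg)$, and choosing
$$Y = H^{1,0}_q(\Omega), \qquad X = H^{1,2}_q(\Omega), \qquad X_B = \{(\rho, \bw) \in X : \bw|_\Gamma = 0\},$$
with $A$ the matrix operator built from $-\rho_*\,\dv$ on the first line and $\rho_*^{-1}\DV(\mu\bD + \nu\,\dv\bI) - \rho_*^{-1}\fp'(\rho_*)\nabla$ on the second, and $B(\rho, \bw) = \bw|_\Gamma$. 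Since the boundary data are homogeneous, I may take $Z = Y$ and $\alpha = 0$ in Assumption \ref{assump:1}, so no boundary-trace machinery is needed.

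Next, I would verify Assumption \ref{assump:1} by combining the two families from Theorem \ref{thm:5.1} into a single operator $\CT(\lambda)(f,\bg) = (\CP(\lambda)(f,\bg), \CS(\lambda)(f,\bg))$. The $\CR$-bounds on $(\tau\pd_\tau)^\ell \CP(\lambda)$ into $H^1_q$ and on $(\tau\pd_\tau)^\ell \CS(\lambda)$ into $H^2_q$ show that $\CT(\lambda)$ is $\CR$-bounded from $Y$ into $X$; the bounds on $(\tau\pd_\tau)^\ell(\lambda\CP(\lambda))$ into $H^1_q$ (the $k=1$ case) and on $(\tau\pd_\tau)^\ell(\lambda\CS(\lambda))$ into $L_q$ (the $j=2$ case) together give $\CR$-boundedness of $\lambda\CT(\lambda)$ from $Y$ into $Y = H^{1,0}_q$. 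This is exactly what Assumption \ref{assump:1} demands (with $g \equiv 0$), so the abstract theory of Section 3 applies: $A$ generates a continuous analytic semigroup on $Y$ with domain $X_B$, the estimates \eqref{semi.est.1} hold, and by standard interpolation the trace space $(Y, X_B)_{1-1/p,p}$ is $H^1_q(\Omega)\times B^{2(1-1/p)}_{q,p}(\Omega)^3$ cut by the compatibility condition $\bv_0|_\Gamma = 0$, matching the hypothesis on $(\theta_0, \bv_0)$.

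With Assumption \ref{assump:1} in place I would extend the data $(f,\bg)$ by zero to $(-\infty, 0)$. Since $\langle t\rangle^b$ is locally bounded on $\BR$, the weighted $L_p(\BR, Y)$-norm of the extension equals the original weighted norm on $(0, T)$, so the hypotheses of Theorem \ref{max.thm.1*} are met with $u_1 = u_0$. That theorem then yields a unique solution $u = (\rho, \bw) \in L_p((0, T), X_B) \cap H^1_p((0, T), Y)$ of \eqref{eq:3} with the weighted estimate \eqref{eq:9}, which is precisely the regularity and bound asserted in Theorem \ref{thm:exp:5.1}. The constant $C$ is independent of $T$ because every constant in the chain — Theorem \ref{thm:5.1}, Theorems \ref{max:thm.2*} and \ref{max.thm.1*}, and the trace-space interpolation — depends only on $\Omega$, $p$, $q$, $b$, $\lambda_0$, and $\lambda_1$.

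The main obstacle I anticipate is the bookkeeping in matching the $\CR$-bounds of Theorem \ref{thm:5.1} to Assumption \ref{assump:1} with the mixed-order target spaces $X$ and $Y$: the first component of $\lambda\CT(\lambda)$ must land in $H^1_q$ rather than $L_q$, which is delivered exactly by the $k=1$ case of the bound on $\lambda^k \CP(\lambda)$, and no weaker bound would suffice. A subsidiary point is the identification of $(Y, X_B)_{1-1/p, p}$ with $H^1_q \times B^{2(1-1/p)}_{q,p}$ plus compatibility; this is standard for each component separately, but the mixed first-order/second-order nature of the scale means one must interpolate in each slot independently and then impose $\bv_0|_\Gamma = 0$ from the definition of $X_B$.
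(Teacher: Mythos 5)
Your proposal is correct and follows essentially the same route as the paper, which likewise obtains Theorem \ref{thm:exp:5.1} by feeding the $\CR$-bounded solution operators of Theorem \ref{thm:5.1} into the abstract machinery of Section 3 (Theorems \ref{max:thm.2*} and \ref{max.thm.1*}) with $Y=H^{1,0}_q(\Omega)$, $X=H^{1,2}_q(\Omega)$, homogeneous boundary data, and the trace space $(Y,X_B)_{1-1/p,p}=H^1_q(\Omega)\times B^{2(1-1/p)}_{q,p}(\Omega)^3$ with $\bv_0|_\Gamma=0$. The only slip is the identification $u_1=u_0$: in Theorem \ref{max.thm.1*} the semigroup is applied to $u_1=u_0-w|_{t=0}$, where $w$ is the whole-line solution, but this is immaterial since $w|_{t=0}$ lies in $\CD$ by the trace estimate and the final bound is stated in terms of $u_0$ anyway.
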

Applying Duhamel's principle to equations \eqref{linearized:5.2} yields that 
$$(\eta_1, \bu_1) = S(t)(\theta_0, \bv_0) + \int^t_0 S(t-s)
(F(\theta, \bv), \bG(\theta,\bv))(\cdot, s)\,ds.$$
Thus, by \eqref{exp:5.1}, we have
\begin{equation}\label{eq:5.6}\begin{aligned}
&\|<t>^b(\eta_1, \bu_1)\|_{L_p((0, T), H^{1,0}_r(\Omega))} \\
&\quad \leq C
(\|(\theta_0, \bv_0)\|_{H^{1,0}_r(\Omega)} + \|<t>^b(F(\theta, \bv), \bG(\theta, \bv))
\|_{L_p((0, T), H^{1,0}_r(\Omega))}). 
\end{aligned}\end{equation}
In fact, setting $I(t) = \int^t_0 S(t-s)(F(\theta, \bv), \bG(\theta, \bv))(\cdot, s)
\,ds$, by \eqref{exp:5.1} we have
\begin{align*}
<t>^b\|I(t)\|_{H^{1,0}_r(\Omega)} &\leq C_r
<t>^b\Bigl\{\int^{t/2}_0 + \int_{t/2}^t \Bigr\}e^{-\lambda_2(t-s)}
\|(F(\theta, \bv), \bG(\theta,\bv))(\cdot, s)\|_{H^{1,0}_r(\Omega)}\,ds \\
&= C_r (II(t)+ III(t)).
\end{align*}
In $II(t)$, using $e^{-\lambda_2(t-s)} \leq e^{-(\lambda_2/2)t}$ as follows from
$0 < s < t/2$,  by H\"older's inequality we have 
$$
II(t) \leq <t>^be^{-(\lambda_2/2)t}\Bigl(\int^\infty_0<s>^{-p'b}\,ds\Bigr)^{1/p'}
\Bigl(\int^T_0 (<s>^b\|(F(\theta, \bv), 
\bG(\theta,\bv))(\cdot, s)\|_{H^{1,0}_r(\Omega)})^p\,ds\Bigr)^{1/p},$$
and so we have
$$\Bigl(\int^T_0II(t)^p\,dt\Bigr)^{1/p}
\leq C\Bigl(\int^\infty_0(<t>^{b}e^{-(\lambda_2/2)t})^p\,dt\Bigr)^{1/p}
\|<t>^b(F(\theta, \bv), \bG(\theta, \bv))\|_{L_2((0, T), H^{1,0}_r(\Omega))}.
$$
On the other hand, using $<t>^b \leq C_b<s>^b$ for $t/2 < s < t$, 
by  H\"older's inequality we have
$$
III(t) \leq C_b\Bigl(\int^t_{t/2} e^{-\lambda_2(t-s)}\,ds\Bigr)^{1/p'}
\Bigl(\int^t_{t/2}e^{-\lambda_2(t-s)}(<s>^b
\|(F(\theta, \bv), \bG(\theta, \bv))(\cdot, s)\|_{L_r(\Omega)})^p\,
ds\Bigr)^{1/p}.
$$
Setting $L = \int^\infty_0 e^{-\lambda_2 t}\,dt$, by Fubini's theorem we have
$$\Bigl(\int^T_0III(t)^p\,dt\Bigr)^{1/p}
\leq C_b L \|<t>^b(F(\theta, \bv), \bG(\theta, \bv))\|_{L_p((0, T), H^{1,0}_r(\Omega))}.
$$
Combining these two estimates yields \eqref{eq:5.6}. 

Moreover, applying Theorem \ref{thm:exp:5.1} to equations \eqref{linearized:5.2}
yields that  
\begin{equation}\label{eq:5.7}\begin{aligned}
&\|<t>^b\pd_t(\eta_1, \bu_1)\|_{L_p((0, T), H^{1,0}_q(\Omega))}
+ \|<t>^b(\eta_1, \bu_1)\|_{L_p((0, T), H^{1,2}_q(\Omega))} \\
&\quad \leq C_q(\|\theta_0\|_{H^1_q(\Omega)} + \|\bv_0\|_{B^{2(1-1/p)}_{q,p}(\Omega)}
 + \|<t>^b(F(\theta, \bv), \bG(\theta, \bv))\|_{L_p((0, T), H^{1,0}_q(\Omega))})
\end{aligned}\end{equation}
for $q=2$, $2+\sigma$ and $6$. Recalling that 
$\|(\theta_0, \bv_0)\|_{\CI} \leq \epsilon^2$, by \eqref{nones:4.1}, 
\eqref{nones:4.3}, \eqref{nones:4.5}, \eqref{nones:4.7},
 \eqref{eq:5.6}, and \eqref{eq:5.7},  we have
\begin{equation}\label{eq:5.8}\begin{aligned}
&
\sum_{q=2, 2+\sigma, 6}(\|<t>^b\pd_t(\eta_1, \bu_1)\|_{L_p((0, T), H^{1,0}_q(\Omega))}
+ \|<t>^b(\eta_1, \bu_1)\|_{L_p((0, T), H^{1,2}_q(\Omega))} )
\\
&\quad + \|<t>^b(\eta_1, \bu_1)\|_{L_p((0, T), H^{1,0}_r(\Omega))}
\leq C(\epsilon^2 + \epsilon^3 + \epsilon^4). 
\end{aligned}\end{equation}
Here, $C$ is a constant independent of $T$ and $\epsilon$. 
By the trace method of real interpolation theorem, 
\begin{align*}
&\|<t>^b\bu_1\|_{L_\infty((0, T), L_q(\Omega))} \\
&\quad \leq C(\|\bv_0\|_{B^{2(1-1/p)}_{q,p}(\Omega)} + 
\|<t>^b\pd_t\bu_1\|_{L_p((0, T), L_q(\Omega))}
+ \|<t>^b\bu_1\|_{L_p((0, T), H^2_q(\Omega))}),
\end{align*}
and so by \eqref{eq:5.8} and $\|(\theta_0, \bv_0)\|_\CI \leq \epsilon^2$, 
\begin{equation}\label{add.est.1}
\sum_{q=2, 2+\sigma, 6}\|<t>^b\bu_1\|_{L_\infty((0, T), L_q(\Omega))}
\leq C(\epsilon^2 + \epsilon^3 + \epsilon^4).
\end{equation}

We now estimate $\eta_2$ and $\bu_2$.  Let $\{T(t)\}_{t\geq 0}$ be 
a continuous analytic semigroup associated with problem: 
\begin{equation}\label{modeleq:5.1}\begin{aligned}
\pd_t \rho + \rho_*\dv \bv = 0 &&\quad&\text{in $\Omega\times(0, \infty)$}, \\
\rho_*\pd_t \bv- \DV(\mu\bD(\bv) + \nu\dv\bv\bI - \fp'(\rho_*)\rho) = 0&&
\quad&\text{in $\Omega\times(0, \infty)$}, \\
\bv|_\Gamma=0\quad (\rho, \bv)|_{t=0}=(\theta_0, \bv_0)&&\quad
&\text{in $\Omega$}. 
\end{aligned}\end{equation}
By Theorem \ref{thm:5.1} and consideration in Sect. 3, we know
the existence of $C^0$ analytic semigroup $\{T(t)\}_{t\geq 0}$ associated with
\eqref{modeleq:5.1}. Moreover, by Enomoto and Shibata \cite{ES2}, 
we know that $\{T(t)\}_{t\geq 0}$ possesses the following $L_p$-$L_q$ 
decay estimates: 
Setting $(\theta, \bv) = T(t)(f, \bg)$, we have 
\begin{equation}\label{lp-lq}\begin{aligned}
\|(\theta, \bv)(\cdot, t)\|_{L_p} &\leq C_{p,q}t^{-\frac32\left(\frac1q-\frac1p\right)}
[(f, \bg)]_{p,q}\quad (t>1);\\
\|\nabla (\theta, \bv)(\cdot, t)\|_{L_p} &\leq C_{p,q}t^{-\sigma(p,q)}
[(f, \bg)]_{p,q} \quad(t>1);\\
\|\nabla^2\bv(\cdot, t)\|_{L_p} &\leq C_{p,q}t^{-\frac{3}{2q}}[(f, \bg)]_{p,q}
\quad(t > 1); \\
\|\pd_t (\theta, \bv)(\cdot, t)\|_{L_p}  &\leq Ct^{-\frac{3}{2q}}
[(f, \bg)]_{p,q}\quad(t > 1).
\end{aligned}\end{equation}
Here, $1 \leq q \leq 2 \leq p < \infty$, $[(f, \bg)]_{p,q}
= \|(f, \bg)\|_{H^{1,0}_p} + \|(f, \bg)\|_{L_q}$, $H^{m,n}_p = H^m_p\times H^n_p
\ni (f, \bg)$ and 
$$\sigma(p,q) = \frac32\left(\frac1q- \frac1p\right)+ \frac12
\quad(2 \leq p \leq 3), \quad \text{and}\quad 
\frac{3}{2q}\quad(p \geq 3).
$$
Moreover, we use 
\begin{equation}\label{lp-lq*}
\|(\theta, \bv)(\cdot, t)\|_{H^{1,2}_q} \leq M\|(f, \bg)\|_{H^{1,2}_q}
\quad(0 < t < 2)
\end{equation}
as follows from the following standard estimate for continuous analytic
semigroup. 
Applying Duhamel's principle to equations \eqref{linearized:5.3}
yields that
$$(\eta_2, \bu_2) = \lambda_1\int^t_0T(t-s)(\eta_1, \rho_*\bu_1)(\cdot, s)\,ds.
$$
Let 
$$
[[(\eta_1, \bu_1)(\cdot, s)]] = \|(\eta_1, \bu_1)(\cdot, s)\|_{H^{1,0}_r(\Omega)}
+ \sum_{q=2, 2+\sigma, 6}
\|(\eta_1, \bu_1)(\cdot, s)\|_{H^{1,2}_q(\Omega)}
+ \|\pd_t(\eta_1, \bu_1)(\cdots, s)\|_{H^{1,0}_q(\Omega)}). 
$$
We set 
$$\tilde E_T(\eta_1, \bu_1) : = \Bigl(\int^T_0(<t>^b[[\eta_1, \bu_1)(\cdot, t)]])^p\,
dt\Bigr)^{1/p},$$
and then, by \eqref{eq:5.8} we have
\begin{equation}\label{eq:5.9}
\tilde E_T(\eta_1, \bu_1) \leq C(\epsilon^2 + \epsilon^3 + \epsilon^4).
\end{equation}
First we consider the case: $2 \leq t  \leq T$. 
Notice  that 
\begin{align*}
&(1/2) + (3/2)(1/2 + 1/(2+\sigma) - 1/2) =
(3/2)(1/2+1/(2+\sigma) -1/6) \\
&\leq 
(1/2) + (3/2)(1/2 + 1/(2+\sigma) - 1/(2+\sigma))
\leq 3/(2r), 
\end{align*}
where $1/r = 1/2 + 1/(2+\sigma)$.  Let 
$\ell= (1/2) + (3/2)(1/2 + 1/(2+\sigma) - 1/2) = (5+\sigma)/(4+2\sigma)$, 
and then all the decay rates used below,  which are obtained by \eqref{lp-lq},
are less than or equal to $\ell$. 

Let $(\eta_3, \bu_3) = (\nabla\eta_2, \bar\nabla^1 \nabla\bu_2)$ 
when $q=2$ or $2+\sigma$, and $(\eta_3, \bu_3) = 
(\bar\nabla^1\eta_2, \bar\nabla^2\bu_2)$ when $q=6$. 
Here, $\bar\nabla^m f = (\pd_x^\alpha f \mid |\alpha| \leq m)$.
And then, 
\begin{align*}
&\|(\eta_3, \bu_3)(\cdot, t)\|_{L_q(\Omega)} \\
&\leq C\Bigl\{\int^{t/2}_0 + \int^{t-1}_{t/2} + \int^t_{t-1}\Bigr\}
\|(\nabla, \bar\nabla^1\nabla)\enskip\text{or}\enskip (\bar\nabla^1, \bar\nabla^2)
T(t-s)(\eta_1, \bu_1)(\cdot, s)\|_{L_q(\Omega)}\,ds \\
& = I_{q} + II_{q} + III_{q}.
\end{align*}

By \eqref{lp-lq},  we have 
\begin{align*}
I_{q}(t) &\leq C\int^{t/2}_0
(t-s)^{-\ell}
[[(\eta_1, \bu_1)]]\,ds \\
&\leq C(t/2)^{-\ell}\Bigl(\int^{t/2}_0<s>^{-b}<s>^b
[[(\eta_1, \bu_1)(\cdot, s)]]\,ds \\
& \leq Ct^{-\ell}
\Bigl(\int^T_0<s>^{-bp'}\,ds\Bigr)^{1/p'}\Bigl(\int^T_0
(<s>^b[[(\eta_1, \bu_1)(\cdot, s)]])^p\,ds\Bigr)^{1/p} \\
& \leq Ct^{-\ell}\tilde E_T(\eta_1, \bu_1).
\end{align*}
Recalling that $b = (3-\sigma)/(2(2+\sigma))$ when $p=2$ and 
$b=(1-\sigma)/(2(2+\sigma))$ when $p=1+\sigma$,  we see that 
$\ell- b = (2+2\sigma)/(2(2+\sigma)) > 1/2$ when $p=2$ and $
\ell-b = 1$ when $p=1+\sigma$. 
Thus, we have
$$\int^T_1(<t>^bI_{q}(t))^p\,dt 
\leq C\tilde E_T(\eta_1, \bu_1)^p.
$$
We next estimate $II_q(t)$. By \eqref{lp-lq} we have 
$$II_{q}(t) \leq C\int^{t-1}_{t/2}(t-s)^{-\ell}
[[(\eta_1, \bu_1)(\cdot, s)]]\,ds.$$
By H\"older's inequality and $<t>^b \leq C_b<s>^b$ for 
$s \in (t/2, t-1)$, we have 
\begin{align*}
<t>^bII_{q}(t) & \leq 
 C\int^{t-1}_{t/2}(t-s)^{-\ell/p'}(t-s)^{-\ell/p}<s>^b [[(\eta_1, \bu_1)(\cdot, s)]]\,
ds\\
&\leq C\Bigl(\int^{t-1}_{t/2}(t-s)^{-\ell}\,ds\Bigr)^{1/p'}
\Bigl(\int^{t-1}_{t/2}(t-s)^{-\ell}(<s>^b[[(\eta_1, \bu_1)(\cdot, s)]]^p)\,ds\Bigr)^{1/p}.
\end{align*}
Setting $\int^\infty_1 s^{-\ell}\,ds =L$, by Fubini's theorem we have 
\begin{align*}
\int^T_2(<t>^b II_{q}(t))^p\,dt
&\leq CL^{p/p'}\int^{T-1}_1(<s>^b[[(\eta_1, \bu_1)(\cdot, s)]])^p
\Bigl(\int^{2s}_{s+1} (t-s)^{-\ell}\,dt\Bigr) \,ds \\
& \leq CL^p\tilde E_T(\eta_1, \bu_1)^p.
\end{align*}
Using a standard estimate \eqref{lp-lq*} for continuous analytic semigroup, we have
\begin{align*}
III_{q}(t) &\leq C\int^t_{t-1}
\|(\eta_1, \bu_1)(\cdot, s)\|_{H^{1,2}_{q}}\,ds
\leq C\int^t_{t-1} [[(\eta_1, \bu_1)(\cdot, s)]]\,ds.
\end{align*}
Thus, employing the same argument as in estimating $II_{q}(t)$, 
we have
$$\int^T_2(<t>^b III_{q}(t))^p\,dt 
\leq C\tilde E_T(\eta_1, \bu_1)^p.
$$
Combining three estimates above yields that
\begin{equation}\label{eq:5.10}
\int^T_2(<t>^b\|(\eta_3, \bu_3)(\cdot, t)\|_{L_q(\Omega)})^p\,dt
\leq C\tilde E_T(\eta_1, \bu_1)^p,
\end{equation}
when $T > 2$.

For $0 < t < \min(2, T)$, using \eqref{lp-lq*}
and employing the same argument as in estimating
$III_q(t)$ above,   we have 
$$\int^{\min(2, T)}_0(<t>^b\|(\eta_3, \bu_3)(\cdot, t)\|_{L_q(\Omega)})^p\,dt
\leq C\tilde E_T(\eta_1, \bu_1)^p,
$$
which, combined with \eqref{eq:5.10},  yields that 
\begin{equation}\label{eq:5.11}
\int^T_0(<t>^b\|(\eta_3, \bu_3)(\cdot, t)\|_{L_q(\Omega)})^p\,dt
\leq C\tilde E_T(\eta_1, \bu_1)^p
\end{equation}
for $q=2$, $2+\sigma$, and $6$.

Since 
$$\pd_t(\eta_2, \bu_2) = -\lambda_1(\eta_1, \rho_*\bu_1)(\cdot, t)
-\lambda_1\int^t_0\pd_tT(t-s)(\eta_1, \rho_*\bu_1)(\cdot,s)\,ds, $$
employing the same argument as in proving \eqref{eq:5.11}, 
we have
\begin{equation}\label{eq:5.12}
\int^T_0(<t>^b\|\pd_t(\eta_2, \bu_2)(\cdot, t)\|_{L_q(\Omega)})^p\,dt
\leq C\tilde E_T(\eta_1, \bu_1)^p
\end{equation}
for $q=2$, $2+\sigma$, and $6$. 

We now estimate $\sup_{2 < t < T}<t>^b \|(\eta_2, \bu_2)\|_{L_q(\Omega)}$
for $q=2, 2+\sigma$ and $6$.
Let $q=2$, $2+\sigma$ and $6$ in what follows. For $2 < t < T$, 
\begin{align*}
\|(\eta_2, \bu_2)(\cdot, t)\|_{L_q(\Omega)} 
&\leq C\Bigl\{\int^{t/2}_0 + \int^{t-1}_{t/2} + \int^t_{t-1}\Bigr\}
\|T(t-s)(\eta_1, \bu_1)(\cdot, s)\|_{L_q(\Omega)}\,ds \\
& = I_{q, 0} + II_{q, 0} + III_{q, 0}.
\end{align*}
By \eqref{lp-lq}, we have
\begin{align*}
I_{q,0}(t) &\leq C\int^{t/2}_0(t-s)^{-3/2(2+\sigma)}[[(\eta_1, \bu_1)(\cdot, s)]]\,ds
\\
&\leq C(t/2)^{-3/2(2+\sigma)}\int^{t/2}_0<s>^{-b}<s>^b[[(\eta_1, \bu_1)(\cdot, s)]]\,ds
\\
&\leq Ct^{-3/2(2+\sigma)}\Bigl(\int^\infty_0<s>^{-p'b}\,ds\Bigr)^{1/p'}
\tilde E_T(\eta_1, \bu_1).
\end{align*}
Note that $3/2(2+\sigma) = (3/2)(1/r-1/2) < (3/2)(1/r-1/(2+\sigma))<(3/2)(1/r-1/6)$.
By \eqref{lp-lq}, we also have
\begin{align*}
II_{q,0}(t) & \leq C\int^{t-1}_{t/2}(t-s)^{-3/2(2+\sigma)}\|(\eta_1, \bu_1)(\cdot, s)]]\,ds
\\
& \leq C\Bigl(\int^{t-1}_{t/2}((t-s)^{-3/2(2+\sigma)}<s>^{-b})^{p'}\,ds\Bigr)^{1/p'}
\Bigl(\int^{t-1}_{t/2}(<s>^b[[(\eta_1, \bu_1)(\cdot, s)]])^p\,ds\Bigr)^{1/p}\\
&\leq C<t>^{-b}\tilde E_T(\eta_1, \bu_1),
\end{align*}
where we have used $3p'/2(2+\sigma) > 1$. By \eqref{lp-lq*}, we have
\begin{align*}III_{q,0}(t) & \leq C\int^t_{t-1}[[(\eta_1, \bu_1)(\cdot, s)]]\,ds\\
& \leq C<t>^{-b}\int^t_{t-1}<s>^b[[(\eta_1, \bu_1)(\cdot, s)]]\,ds \\
&\leq C<t>^{-b}\Bigl(\int^t_{t-1}\,ds\Bigr)^{1/p'}\tilde E_T(\eta_1, \bu_1).
\end{align*}
Since $b< 3/2(2+\sigma)$, 
combining these estimates above yields that 
\begin{equation}\label{eq:5.13}
\sup_{2 < t < T}<t>^b\|(\eta_1, \bu_1)(\cdot, t)\|_{L_q(\Omega)} 
\leq C\tilde E_T(\eta_2, \bu_2)
\end{equation}
 For $0 < t < \min(2, T)$, by standard estimate
\eqref{lp-lq*} of continuous analytic semigroup, we have
$$
\sup_{0 < t < \min(2, T)}<t>^b\|(\eta_1, \bu_1)(\cdot, t)\|_{L_q(\Omega)} 
\leq C\tilde E_T(\eta_2, \bu_2)
$$
which, combined with \eqref{eq:5.13}, yields that 
\begin{equation}\label{eq:5.14*}
\|<t>^b(\eta_1, \bu_1)(\cdot, t)\|_{L_\infty((0, T), L_q(\Omega)} 
\leq C\tilde E_T(\eta_2, \bu_2)
\end{equation}
for $q=2, 2+\sigma$ and $6$. 

Recalling that 
$\eta=\eta_1+\eta_2$ and $\bu=\bu_1+\bu_2$, 
noting that
$E_T(\eta_1, \bu_1) \leq C(\tilde E_T(\eta_1, \bu_1)+\|(\theta_0, \bv_0)\|_\CI)$ 
as follows from \eqref{add.est.1}, and combining
\eqref{eq:5.11}, \eqref{eq:5.12}, \eqref{eq:5.14*}, and \eqref{eq:5.9} yield that
\begin{equation}\label{eq:5.14}
E_T(\eta, \bu) \leq C(\epsilon^2 + \epsilon^3 + \epsilon^4).
\end{equation}
If we choose $\epsilon > 0$ so small that
$C(\epsilon + \epsilon^2 + \epsilon^3) < 1$ in \eqref{eq:5.14}, we have
$E_T(\eta, \bu) \leq \epsilon$.  Moreover, by \eqref{eq:4.3} 
$$\sup_{t \in (0, T)} \|\eta(\cdot, t)\|_{L_\infty(\Omega)}
\leq C(\|\eta_0\|_{H^1_6} + \|\pd_t\eta\|_{L_p((0, T), H^1_6(\Omega))})
\leq C(\epsilon^2 +\epsilon^3 + \epsilon^4).$$
Thus, choosing $\epsilon > 0$ so small that 
$C(\epsilon^2 +\epsilon^3 + \epsilon^4) \leq \rho_*/2$,
 we see that 
$\sup_{t \in (0, T)} \|\eta(\cdot, t)\|_{L_\infty(\Omega)} \leq \rho_*/2$. 
And also, 
$$\int^T_0\|\nabla\bu(\cdot, s)\|_{L_\infty(\Omega)}\,ds 
\leq \Bigl(\int^\infty_0<s>^{-p'b}\,ds\Bigr)^{1/p'}
\|<t>^b\nabla\bu\|_{L_p((0, T), H^1_6(\Omega))}
\leq C_{p', b}(\epsilon^2 + \epsilon^3 + \epsilon^4).
$$
Thus, choosing $\epsilon > 0$ so small that 
$C_{p', b}(\epsilon^2 + \epsilon^3 + \epsilon^4) \leq \delta$,
 we see that 
$\int^T_0\|\nabla\bu(\cdot, s)\|_{L_\infty(\Omega)}\,ds \leq \delta$.
From consideration above, it follows that $(\eta, \bu) 
\in \CV_{T, \epsilon}$. Let $\CS$ be an operator defined by 
$\CS(\theta, \bv) = (\eta, \bu)$ for $(\theta, \bv)
\in \CV_{T, \epsilon}$, and then $\CS$ maps $\CV_{T, \epsilon}$
into itself.  

We now show that $\CS$ is a contraction map.  Let 
$(\theta_i, \bv_i) \in \CV_{T, \epsilon}$ ($i=1,2$) and set 
$(\eta, \bu) = (\eta_1, \bu_1) - (\eta_2, \bu_2) = \CS(\theta_1, \bv_1)-
\CS(\theta_2, \bv_2)$, and $F = F(\theta_1, \bv_1)-F(\theta_2, \bv_2)$ and 
$\bG = \bG(\theta_1, \bv_1) - \bG(\theta_2, \bv_2)$.  And then, 
from \eqref{linearized:5.1} it follows that 
\begin{equation}\label{difeq:5.1}\begin{aligned}
\pd_t\eta + \rho_*\dv \bu = F
&&\quad&\text{in $\Omega \times(0, T)$}, \\
\rho_*\pd_t\bu- \DV(\mu\bD(\bu) + \nu\dv\bu\bI - \fp'(\rho_*)\eta) 
= \bG&&
\quad&\text{in $\Omega \times(0, T)$}, \\
\bu|_\Gamma=0, \quad (\eta, \bu)|_{t=0} = (0, 0)&&
\quad&\text{in $\Omega$}. 
\end{aligned}\end{equation}
By \eqref{nones:4.2}, \eqref{nones:4.4}, \eqref{nones:4.6}, and \eqref{nones:4.8}, 
we have
$$\|(F, \bG)\|_{L_p((0, T), H^{1,0}_r(\Omega))} 
+ \sum_{q=2, 2+\sigma, 6}\|(F, \bG)\|_{L_p((0, T), H^{1,0}_q(\Omega))}
\leq C(\epsilon + \epsilon^2+ \epsilon^3)E_T((\theta_1, \bv_1)-(\theta_2, \bv_2)).
$$
Applying the same argument as in proving \eqref{eq:5.14} to equations
\eqref{difeq:5.1} and recalling $(\eta, \bu) = \CS(\theta_1, \bv_1)-
S(\theta_2, \bv_2)$,
we have
$$E_T( \CS(\theta_1, \bv_1)-
S(\theta_2, \bv_2)) 
\leq C(\epsilon + \epsilon^2+ \epsilon^3)E_T((\theta_1, \bv_1)-(\theta_2, \bv_2)),
$$
for some constant $C$ independent of $\epsilon$ and $T$.  Thus, choosing
$\epsilon > 0$ so small that $C(\epsilon + \epsilon^2+ \epsilon^3) < 1$, we have
that $\CS$ is a contraction map on $\CV_{T, \epsilon}$, which proves  
Theorem \ref{mainthm:2}. Since the contraction mapping principle yields
the uniqueness of solutions in $\CV_{T, \epsilon}$, 
we have completed the proof of Theorem \ref{mainthm:2}. 
\section{A proof of Theorem \ref{thm:main0}}

We shall prove Theorem \ref{thm:main0} with the help of Theorem \ref{mainthm:2}.
In what follows, let $b$ and $p$ be the constants given in Theorem \ref{mainthm:2},
and $q=2, 2+\sigma$ and $6$. 
As was stated in Sect. 2, the Lagrange transform \eqref{lag:1} gives a
$C^{1+\omega}$ ($\omega \in (0, 1/2)$) diffeomorphism on $\Omega$ and 
$dx= \det(\bI + \bk)\,dy$, where $\{x\}$ and $\{y\}$
denote respective Euler coordinates and Lagrange coordinates on $\Omega$
and $\bk = \int^t_0\nabla \bu(\cdot, s)\,ds$. 
By \eqref{lag:2}, $\|\bk\|_{L_\infty(\Omega)} \leq \delta < 1$. In particular, 
choosing $\delta>0$ smaller if necessary, we may assume that 
$C^{-1}\leq \det(\bI + \int^t_0\nabla\bu(\cdot, s) \,ds)\leq C$ with some constant
$C > 0$ for any 
$(x, t) \in \Omega\times(0, T)$. Let $y = X_t(x)$ be an inverse map of Lagrange transform \eqref{lag:1}, and set $\theta(x, t) = \eta(X_t(x), t)$ and $\bv(x, t)
= \bu(X_t(x), t)$. We have 
$$\|(\theta, \bv)\|_{L_q(\Omega)} \leq C\|(\eta, \bu)\|_{L_q(\Omega)}.$$
Noting that $(\eta, \bu)(y, t) = (\theta, \bv)(y+\int^t_0\bu(y, s)\,ds, t)$, 
the chain rule of composite functions yields that 
\begin{align*}
\|(\nabla(\theta, \bv)\|_{L_q(\Omega)} \leq C
(1-\|\bk\|_{L_\infty(\Omega)})^{-1}\|\nabla(\eta, \bu)\|_{L_q(\Omega)}; \\
\|\nabla^2\bv\|_{L_q(\Omega)} \leq C(1-\|\bk\|_{L_\infty(\Omega)})^{-2}
\|\nabla^2\bu\|_{L_q(\Omega)} + (1-\|\bk\|_{L_\infty(\Omega)})^{-1}
\|\nabla\bk\|_{L_q(\Omega)}\|\nabla\bu\|_{L_\infty(\Omega)}.
\end{align*} 
Thus, using 
$\|\nabla\bk\|_{L_q(\Omega)} \leq C\|<t>^b\nabla^2\bu\|_{L_p((0, T), L_q(\Omega))}$
and $\|\nabla\bu\|_{L_\infty(\Omega)}\leq C\|\nabla \bu\|_{H^1_6(\Omega)}$, we have
\begin{align*}
\|<t>^b\nabla(\theta, \bv)\|_{L_\infty((0, T), L_2(\Omega) \cap L_6(\Omega))}
&\leq C\|<t>^b\nabla(\theta, \bv)\|_{L_\infty((0, T), L_2(\Omega) \cap L_6(\Omega))};
\\
\|<t>^b(\theta, \bv)\|_{L_p((0, T), L_6(\Omega))}
&\leq C\|<t>^b(\theta, \bv)\|_{L_p((0, T),  L_6(\Omega))};
\\
\|<t>^b(\theta, \bv)\|_{L_\infty((0, T), L_2(\Omega) \cap L_6(\Omega))}
&\leq C\|<t>^b(\theta, \bv)\|_{L_p((0, T), L_2(\Omega) \cap L_6(\Omega))};
\\
\|<t>^b\nabla^2\bv\|_{L_p((0, T), L_2(\Omega) \cap L_6(\Omega))}
&\leq C(\|<t>^b\nabla^2\bu\|_{L_p((0, T), L_2(\Omega) \cap L_6(\Omega))}\\
&+ \|<t>^b\nabla^2\bu\|_{L_p((0, T), L_q(\Omega))}\|<t>^b\nabla \bu
\|_{L_p((0, T), H^1_6(\Omega))}).
\end{align*}
Since
$\pd_t(\eta, \bu)(y, t) = \pd_t[(\theta,\bv)(y+ \int^t_0\bu(y, s)\,ds, t)]
= \pd_t(\theta, \bv)(x, t) +\bu\cdot \nabla(\theta, \bv)(x, t)$, 
we have 
\begin{align*}
\|\pd_t(\theta, \bv)\|_{L_q(\Omega)} 
\leq C\|\pd_t(\eta, \bu)\|_{L_q(\Omega)} + \|\bu\|_{L_\infty(\Omega)}
\|\nabla\eta\|_{L_q(\Omega)} 
+ \|\bu\|_{L_q(\Omega)}\|\nabla\bu\|_{L_\infty(\Omega)}.
\end{align*}
Since
$\|\nabla\eta\|_{L_\infty((0, T), L_q(\Omega))}
\leq \|\nabla \theta_0\|_{L_q(\Omega)} 
+ C\|<t>^b\pd_t\eta\|_{L_p((0, T), H^1_q(\Omega))}$, we have
\begin{align*}
\|<t>^b\pd_t(\theta, \bv)\|_{L_p((0, T), L_q(\Omega))}
&\leq C(\|<t>^b\pd_t(\eta, \bu)\|_{L_p((0, T), L_q(\Omega))} \\
&+ (\|\nabla \theta_0\|_{L_q(\Omega)} 
+ \|<t>^b\pd_t\eta\|_{L_p((0, T), H^1_q(\Omega))})\|<t>^b
\bu\|_{L_p((0, T), H^1_6(\Omega))}\\
&+ \|<t>^b\bu\|_{L_\infty((0, T), L_q(\Omega))}
\|<t>^b\nabla\bu\|_{L_p((0, T), H^1_6(\Omega))}).
\end{align*}
By Theorem \ref{mainthm:2} we see that there exists a small constant
$\epsilon > 0$ such that if initial data $(\theta_0,\bv_0) \in \CI$ 
satisifes the compatibility condition: $\bv_0|_\Gamma=0$ and 
the smallness condition: $\|(\theta_0, \bv_0)\|_{\CI} \leq \epsilon^2$
then problem \eqref{eq:1.1} admits unique solutions 
$\rho = \rho_*+\theta$ and $\bv$ satisfying the regularity conditions
\eqref{sol:1} and $\CE(\theta, \bv) \leq \epsilon$. This completes the proof
of Theorem \ref{thm:main0}. 
\section{Comment on the proof}
Let $N \geq 3$ and $\Omega$ be an exterior domain in 
$\BR^N$. 
Assume that  $L_p$-$L_q$ decay estmates for $C_0$ analytic semigroup
like \eqref{lp-lq} are valid.  
We choose $q_1=2$, $q_2 = 2+\sigma$,  and $q_3$ in such a way that $q_3 > N$ and 
$$\frac12 + \frac{N}{2(2+\sigma)} \leq \frac{N}{2}\Bigl(\frac12+\frac{1}{2+\sigma}
-\frac{1}{q_3}\Bigr). $$
Namely, $q_3 =6$ ($N=3$) and $q_3 >N \geq  2N/(N-2)$ for $N \geq 4$. 
If  $L_1$ in space estimates hold, then the global well-posedness is 
established with $q_1=q_2=2$.  But, so far
 $L_1$ in space estimates does not hold, and so we have chosen
$q_1=2$ and $q_2 = 2+\sigma$. 
Let 
$p$ and $b$ be chosen in such a way that 
$$\Bigl(\frac12 + \frac{N}{2(2+\sigma)} - b\Bigr)p > 1, 
\quad bp' > 1.$$
If we write equations as 
$$\pd_tu - Au = f, \quad Bu = g \quad(t > 0), \quad u|_{t=0}=u_0.$$
Here, $Bu=g$ is corresponding to boundary conditions, 
and $f$ and $g$ are corresponding to nonlinear terms. The first reduction
is that $u_1$ is a solution to equations: 
$$\pd_tu_1 + \lambda_1 u_1 - Au_1 = f, 
\quad Bu_1 = g \quad(t > 0), \quad u_1|_{t=0}=u_0.$$
Then, $u_1$ has the same decay properties as nonlinear terms $f$ and $g$ have. 
If $u_1$ does not belong to the domain of the operator $(A, B)$ 
(free boundary conditions or slip boundary conditions cases)), 
in addition we choose $u_2$ as 
 a solution of equations:
$$\pd_tu_2 + \lambda_1 u_2 - Au_2 = \lambda_1u_1, \quad Bu_2 = 0
 \quad(t > 0), \quad u_2|_{t=0}=0,$$
with very large constant $\lambda_1 > 0$. 
Since $u_2$ belongs to the domain of operator $A$ for any $t > 0$, 
we choose $u_3$
 as a solution of equations:
$$\pd_tu_3 - Au_3 = \lambda_1 u_2, \quad Bu_3 = 0 \quad(t > 0), \quad u_3|_{t=0}=0.$$
And then, by the Duhamel principle, we have
$$u_3= \lambda_1\int^t_0T(t-s)u_2(s)\,ds,
$$
and we use \eqref{lp-lq}
estimate for $0 < s < t-1$  and 
a standard semigroup estimate for $t-1 < s < t$, that is
$\|T(t-s)u_2(s)\|_{D(A)} \leq C\|u(s)\|_{D(A)}$ for $t-1<s<t$, where $\|\cdot\|_{D(A)}$
is a domain norm. 

When $N=2$, the method above is fail, because 
$$\frac12 + \frac{2}{2(2+\sigma)} < 1.$$
And so, Matsumura-Nishida method seems to be only the way to prove the 
global wellposedness in a two dimensiona exterior domain. 

\end{document}